\documentclass[11pt, letterpaper]{amsart}
\usepackage[plainpages=false,pdfpagelabels,
colorlinks=true,linkcolor=blue,
citecolor=blue,filecolor=blue,      
urlcolor=blue,hypertexnames=false]{hyperref}
\usepackage{amsmath, amsthm, amssymb, bm, graphicx, mathrsfs, mathtools}
\usepackage{stmaryrd} \SetSymbolFont{stmry}{bold}{U}{stmry}{m}{n}	
\usepackage{array}
\usepackage{makecell}	
\usepackage{tikz}       
\usepackage{tikz-qtree} 
\usepackage{tikz-cd}  
\usetikzlibrary{positioning, patterns, calc, matrix, shapes.arrows, shapes.symbols}
\usepackage{braids}
\usepackage{tqft}
\usepackage{ytableau}
\usepackage{quiver} 
\usepackage{cleveref}
\usepackage{paralist}
\usepackage{pdfsync}
\usepackage[
    backend=biber,
    style=alphabetic,
    citestyle=alphabetic,
    giveninits=true,
    maxnames=99,
    maxbibnames=99,
    maxalphanames=99,
    minalphanames=99,
    doi=false,
    isbn=false,
    url=false,
    eprint=false
]{biblatex}

\DeclareFieldFormat[inbook]{title}{#1}
\DeclareFieldFormat[incollection]{title}{#1}
\DeclareFieldFormat[inproceedings]{title}{#1}

\renewbibmacro{in:}{}

\DeclareFieldFormat[article]{journaltitle}{#1}
\DeclareFieldFormat[article]{volume}{\mkbibbold{#1}} 
\DeclareFieldFormat[article]{number}{#1}
\DeclareFieldFormat[article]{year}{(#1)} 
\DeclareFieldFormat[article]{pages}{#1} 

\renewbibmacro*{journal+issuetitle}{%
  \printfield{journaltitle}%
  \setunit*{ }%
  \printfield{volume}%
  \setunit{ }%
  \printfield{year}%
  \setunit{, }%
}

\renewbibmacro*{issue+date}{}

\renewbibmacro*{volume+number+eid}{}

\defbibheading{bibintoc}[\bibname]{%
  \addcontentsline{toc}{section}{#1}
  \markboth{#1}{#1}%
  \centering\textbf{\Large #1}\par\vspace{0.5em}
}

\DeclareLabelalphaTemplate{
  \labelelement{
    \field[strwidth=1]{labelname}
  }
}


\DeclareSourcemap{
	\maps[datatype=bibtex]{
		\map{
			\step[ 
			fieldsource=url,
			match=\regexp{http://dx.doi.org/(.+)},
			fieldtarget=doi,
			]
			\step[ 
			fieldsource=doi,
			match=\regexp{http://dx.doi.org/(.+)},
			replace=\regexp{$1}
			]
		}
		\map{ 
			\step[fieldsource=doi, final]
			\step[fieldset=url, null]
			\step[fieldset=urldate, null]
		}
	}
}

\DefineBibliographyStrings{english}{
  bibliography = {References},
}

\addbibresource{Math.bib}


\theoremstyle{plain}
\newtheorem{thm}{Theorem}[section]
\newtheorem{theorem}[thm]{Theorem}
\newtheorem*{theorem*}{Theorem}
\newtheorem{lemma}[thm]{Lemma}
\newtheorem{corollary}[thm]{Corollary}
\newtheorem{proposition}[thm]{Proposition}

\theoremstyle{definition}

\newtheorem{remark}[thm]{Remark}

\newtheorem{definition}[thm]{Definition}

\newtheorem{assumption}[thm]{Assumption}

\numberwithin{equation}{thm}

\title[Mod-$\ell$ monodromy ]{Mod-$\ell$ monodromy of double covers of $\mathbb{P}^n$ branched along hyperplane arrangements}

\author[]{Xiaopeng Xia}
\address{Beijing Institute of Mathematical Sciences and Applications\\ No. 544, Hefangkou Village, Huaibei Town, Huairou District, Beijing 101408, P. R. China}
\email{xiaxiaopeng@bimsa.cn}
\author[]{Jinxing Xu}%
\address{School of Mathematical Sciences, University of Science and Technology of China\\ No. 96 Jinzhai Road, Hefei, Anhui 230026, P. R. China}
\email{xujx02@ustc.edu.cn}%

\date{}

\begin{document}

\subjclass[2020]{14D05, 14F20, 20G40.}
  \keywords{mod-$\ell$ monodromy group, hyperplane arrangement, Picard-Lefschetz formula}
 \begin{abstract}
 We determine the mod-$\ell$ geometric monodromy group of the universal family of double covers of projective space branched along hyperplane arrangements in general position.
 \end{abstract}
  \maketitle
  

\section{Introduction}

The main purpose of this paper is to establish a higher-dimensional analogue of the following theorem of J.-K.~Yu~\cite{yu1997toward}.

\begin{theorem}\label{thm:Yu thm}
The $\operatorname{mod}$-$\ell$ geometric monodromy group of the family of genus~$g$
hyperelliptic curves is $\operatorname{Sp}(2g,\mathbb{F}_\ell)$ for every
prime $\ell>2$.
\end{theorem}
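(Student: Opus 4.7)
The plan is to combine the Picard--Lefschetz formula with the classification of subgroups of $\operatorname{Sp}(2g,\mathbb{F}_\ell)$ generated by symplectic transvections. Let $U\subset\operatorname{Sym}^{2g+2}(\mathbb{A}^1)$ be the open locus of distinct unordered $(2g+2)$-tuples and $f\colon\mathcal{C}\to U$ the universal family of smooth hyperelliptic curves of genus $g$. Since Poincar\'e duality on $R^1 f_*\mathbb{F}_\ell$ is non-degenerate and alternating, the mod-$\ell$ monodromy representation $\rho_\ell\colon \pi_1(U)\to \operatorname{GL}(H^1(\mathcal{C}_{\bar t},\mathbb{F}_\ell))$ factors through $\operatorname{Sp}(2g,\mathbb{F}_\ell)$, so the task is to prove surjectivity.

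First I would produce a rich supply of transvections via Lefschetz degenerations. Compactify $U$ inside $\operatorname{Sym}^{2g+2}(\mathbb{P}^1)$ and examine a general point of the boundary divisor where two branch points collide: the corresponding curve acquires a single node, and the Picard--Lefschetz formula identifies the local monodromy of $R^1 f_*\mathbb{F}_\ell$ with the symplectic transvection $T_\delta(v) = v+\langle v,\delta\rangle\delta$ along the associated vanishing cycle $\delta$. Concretely, fix a basepoint with real branch points $a_1<\dots<a_{2g+2}$ and pick the $2g+1$ arcs joining consecutive $a_i$; their lifts $\delta_1,\dots,\delta_{2g+1}$ to the double cover satisfy $\langle\delta_i,\delta_{i+1}\rangle=\pm 1$ and $\langle\delta_i,\delta_j\rangle=0$ for $|i-j|>1$, so they span $H_1(\mathcal{C}_{\bar t},\mathbb{F}_\ell)$ with intersection graph of type $A_{2g+1}$. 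The braid-theoretic half-twist exchanging $a_i$ and $a_{i+1}$ realises $T_{\delta_i}$ inside $\operatorname{Im}(\rho_\ell)$, and conjugating by arbitrary braid words sweeps out transvections along the whole braid orbit of the $\delta_i$.

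The remaining and main step is to show that the subgroup $G\subset \operatorname{Sp}(2g,\mathbb{F}_\ell)$ generated by these transvections is the entire symplectic group. Connectedness of the intersection graph $A_{2g+1}$ forces $G$ to act irreducibly on $H^1(\mathcal{C}_{\bar t},\mathbb{F}_\ell)$. I would then invoke the classification of Zalesskii--Sereskin (with precursors due to McLaughlin and Wagner) of irreducible subgroups of $\operatorname{Sp}_{2g}$ over a finite field of odd characteristic that are generated by transvections: such a subgroup is either contained in a finite reflection subgroup of classical type or equals $\operatorname{Sp}(2g,\mathbb{F}_\ell)$. Ruling out the finite alternative is where the genuine work lies: one must exhibit a further transvection, produced by a non-adjacent braid word on the branch points, whose vanishing cycle is not contained in the Weyl orbit of $\{\delta_i\}$, so that $|G|$ outstrips any finite reflection group once $\ell$ is large. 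The hypothesis $\ell>2$ enters essentially at the last step, since in characteristic $2$ a symplectic transvection preserves the Arf invariant of any compatible quadratic refinement of $\langle\cdot,\cdot\rangle$, and the monodromy would then land in a proper orthogonal subgroup of $\operatorname{Sp}$.
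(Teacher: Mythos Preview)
The paper does not itself prove this theorem; it is quoted in the introduction as a result of J.-K.~Yu, with independent reproofs cited to Achter and to Hall. What the paper does rely on, and what bears directly on your outline, is Hall's criterion (invoked later as \cite[Thm.~3.1]{10.1215/S0012-7094-08-14115-8}): over $\mathbb{F}_\ell$ with $\ell$ odd, an irreducible subgroup of $\operatorname{Sp}(V)$ generated by transvections is all of $\operatorname{Sp}(V)$.

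Your outline is sound through the construction of the $A_{2g+1}$ chain of vanishing cycles, the observation that the braid half-twists realise the transvections $T_{\delta_i}$, and the deduction of irreducibility from connectedness of the intersection graph. The trouble is your last paragraph. In the symplectic setting over a \emph{prime} field $\mathbb{F}_\ell$ with $\ell$ odd, the Zalesski\u{\i}--Sere\v{z}kin classification leaves exactly one irreducible subgroup generated by transvections, namely $\operatorname{Sp}_{2g}(\mathbb{F}_\ell)$ itself: the only other entries on the list are symplectic groups over proper subfields, and $\mathbb{F}_\ell$ has none. There are no ``finite reflection group'' alternatives here---symplectic transvections have order $\ell$, not $2$, so the Weyl-group/root-orbit picture you invoke belongs to the orthogonal case and does not arise. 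Consequently there is no need to manufacture an extra transvection outside any ``Weyl orbit'', and, more seriously, your clause ``once $\ell$ is large'' signals an argument that would not cover small odd primes, whereas the theorem is uniform in $\ell>2$. Once you have irreducibility and generation by transvections, you are finished for every odd $\ell$; that is exactly how the references the paper cites proceed.
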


This result was later reproved independently in \cite{achter2007integral} and 
\cite{10.1215/S0012-7094-08-14115-8}.  
It has important applications, for instance to the study of the Cohen--Lenstra heuristics over quadratic function fields~\cite{achter2006distribution}, and to the distribution of zeta functions of hyperelliptic curves~\cite{chavdarov1997generic, kowalski2006large}.

As a hyperelliptic curve is a double cover of $\mathbb{P}^1$ branched along
distinct points, a natural higher-dimensional analogue is a double cover
of $\mathbb{P}^n$ branched along a hyperplane arrangement in general
position. As the arrangement varies, we obtain a projective family whose
fibers give rise to a local system via their cohomology. Our aim is to
determine the monodromy group of this local system.

\noindent\textbf{Notations.}  Throughout the paper, $n$ is a positive integer, $m$ is an even integer with $m\ge n+3$, $\ell$ is an odd prime, and $\mathbb{K}$ is an algebraically closed field in which $2\ell$ is invertible. Except in
\Cref{subsection:K=C case}, all cohomology groups $H^i$ are understood as
\'etale cohomology, and $\pi_1$ denotes the \'etale fundamental group.

Let \(\mathfrak{M}\) denote the coarse moduli space parameterizing ordered 
$m$-tuples of hyperplane arrangements in \(\mathbb{P}^n_{\mathbb{K}}\) in 
general position. Associated to this space is the universal family
\(
f \colon \mathcal{X} \longrightarrow \mathfrak{M}\),
whose fiber over a point of \(\mathfrak{M}\) is the double cover of 
\(\mathbb{P}^n_{\mathbb{K}}\) branched along the corresponding arrangement 
(see \Cref{subsection:double_cover} for precise definitions). Fix a closed point $s \in \mathfrak{M}$ and denote the fiber by 
$X=\mathcal{X}_s$. The local system $R^n f_* \mathbb{F}_\ell$ gives rise 
to a monodromy action of $\pi_1(\mathfrak{M},s)$ on $H^n(X,\mathbb{F}_\ell)$. 
Let $H^n(X,\mathbb{F}_\ell)_{(1)}$ be the $(-1)$-eigenspace for the involution 
$X \to X$ coming from the double-cover structure. Then we have a canonical 
decomposition
\[
H^n(X,\mathbb{F}_\ell)
  = H^n(X,\mathbb{F}_\ell)_{(1)}
    \oplus H^n(\mathbb{P}^n_{\mathbb{K}},\mathbb{F}_\ell).
\]
In particular, if $n$ is odd, the second summand vanishes and hence
$H^n(X,\mathbb{F}_\ell) = H^n(X,\mathbb{F}_\ell)_{(1)}$. The Poincar\'e duality pairing
\[
Q = (\cdot,\cdot)\colon 
H^n(X,\mathbb{F}_\ell)_{(1)}\times H^n(X,\mathbb{F}_\ell)_{(1)}
\longrightarrow \mathbb{F}_\ell
\]
is non-degenerate; it is skew-symmetric when $n$ is odd and symmetric when 
$n$ is even. The pairing $Q$ is preserved by the monodromy action, and thus 
we obtain a monodromy representation
\[
\rho \colon \pi_1(\mathfrak{M},s) \longrightarrow 
\operatorname{Aut}\!\left(H^n(X,\mathbb{F}_\ell)_{(1)}, Q\right).
\]

  According to the parity of $n$, we denote $\operatorname{Aut}(H^n(X,\mathbb{F}_\ell)_{(1)},Q)$ by the symplectic group $\operatorname{Sp}(H^n(X,\mathbb{F}_\ell)_{(1)})$ (if $n$ is odd) or the orthogonal group $\operatorname{O}(H^n(X,\mathbb{F}_\ell)_{(1)})$ (if $n$ is even). If $n$ is even, we have the spinor norm homomorphism
\[
\theta\colon \operatorname{O}\big(H^n(X,\mathbb{F}_\ell)_{(1)}\big)\longrightarrow 
\mathbb{F}_\ell^{\!*}/(\mathbb{F}_\ell^{\!*})^2=\{\pm1\},
\]
as well as the product of the spinor norm with the determinant,
\[
\theta\cdot\det\colon 
\operatorname{O}\big(H^n(X,\mathbb{F}_\ell)_{(1)}\big)\longrightarrow \{\pm1\}.
\]
Our main result is the following 
\begin{theorem}\label{thm:monodromy for m}
	Let 
\[
\Gamma \coloneqq \operatorname{Im}\rho\!\left(\pi_1(\mathfrak{M}, s)\right)
\]
denote the $\operatorname{mod}$-$\ell$ monodromy group.
\begin{compactenum}[\normalfont(1)]
    \item If $n$ is odd, then
    \[
   \Gamma = \operatorname{Sp}\bigl(H^n(X,\mathbb{F}_\ell)_{(1)}\bigr)=\operatorname{Sp}\bigl(H^n(X,\mathbb{F}_\ell)\bigr).
    \]
    \item Suppose $n$ is even and $\ell \ge 5$. 
    Then $\Gamma  = \ker \theta$ if one of the following conditions holds; 
    otherwise $\Gamma  = \ker (\theta \cdot \det)$.
    \begin{compactenum}[\normalfont(i)]
        \item $n \equiv 0 \pmod{4}$;
        \item $n \equiv 2 \pmod{4}$ and $\ell \equiv 1 \pmod{4}$.
    \end{compactenum}
\end{compactenum}
	\end{theorem}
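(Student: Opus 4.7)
The plan is to follow the classical Picard--Lefschetz/transvection strategy that underlies Yu's proof in the hyperelliptic case. Identify a codimension-one degeneration of the hyperplane arrangement in $\mathfrak{M}$; apply Picard--Lefschetz at such a degeneration to show that the corresponding local monodromy on $H^n(X,\mathbb{F}_\ell)_{(1)}$ is a transvection (when $n$ is odd) or a reflection (when $n$ is even); argue that enough such vanishing cycles lie in a single $\Gamma$-orbit and span the primitive cohomology; and finally invoke a classification theorem for subgroups of $\mathrm{Sp}$ or $\mathrm{O}$ over $\mathbb{F}_\ell$ generated by transvections or reflections to identify $\Gamma$.

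The natural codimension-one degeneration is the locus in $\mathfrak{M}$ where exactly $n+1$ of the $m$ hyperplanes become concurrent at a single point; concurrence of $n+1$ hyperplanes in $\mathbb{P}^n$ is cut out by a single determinantal equation, so this stratum indeed has codimension~$1$. One then analyzes the local geometry of the (smooth model of the) double cover near the concurrence point to show that the special fiber differs from a nearby smooth fiber by a single vanishing cycle. Picard--Lefschetz produces a class $\delta\in H^n(X,\mathbb{F}_\ell)_{(1)}$ --- it lies in the $(-1)$-eigenspace because the covering involution acts by $-1$ on the vanishing sphere --- and identifies the local monodromy with the symplectic transvection $x\mapsto x+(x,\delta)\delta$ for $n$ odd, and with the reflection $x\mapsto x-(x,\delta)\delta$ for $n$ even. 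To globalize, two further inputs are needed: the $\mathfrak{S}_m$-action on $\mathfrak{M}$ permuting hyperplane labels, together with connectedness of the relevant discriminant components, shows that all vanishing cycles obtained this way lie in a single $\Gamma$-orbit; and a Lefschetz pencil argument shows that these classes span $H^n(X,\mathbb{F}_\ell)_{(1)}$. Together these imply that $\Gamma$ is generated by an irreducibly acting orbit of transvections or reflections, at which point Kantor's theorem (symplectic case) and the Zalesskii--Serezhkin classification (orthogonal case) force $\Gamma$ into one of the groups in the statement.

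The step I expect to be the main obstacle is pinning down which of $\ker\theta$ and $\ker(\theta\cdot\det)$ occurs in part~(2). Each reflection has determinant $-1$, so the distinction is controlled entirely by the spinor norm $\theta(r_\delta)\in\mathbb{F}_\ell^{*}/(\mathbb{F}_\ell^{*})^2$, which equals the class of $Q(\delta,\delta)$ modulo squares. One therefore has to produce a concrete topological model for the vanishing cycle and compute its self-intersection explicitly. The case distinction in part~(2) is the shadow of this computation: the sign appearing in the self-intersection of a vanishing $n$-sphere depends on the parity of $n/2$, while whether $-1$ is a square in $\mathbb{F}_\ell$ depends on $\ell\bmod 4$, so these two parities combine to give exactly the alternatives (i) and (ii). The hypothesis $\ell\geq 5$ is needed precisely to avoid collapse of these invariants and to stay in the range where the Kantor and Zalesskii--Serezhkin classifications apply cleanly.
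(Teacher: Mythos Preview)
Your high-level strategy matches the paper's closely, but there are two genuine gaps, both of which the paper fills using an ingredient you do not mention: the hyperelliptic locus inside the moduli space.

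First, in the even case, invoking a classification of irreducible reflection groups is not enough on its own. Hall's criterion (which the paper uses, and which packages Zalesskii--Serezhkin) requires, in addition to an irreducible orbit of reflections, an \emph{isotropic shear}: a nontrivial element $g$ with $(g-1)^2=0$. Reflections alone never produce such an element. The paper obtains it by exhibiting a locus in $\mathfrak{M}$ along which the arrangement is $(H_i = \{p_i\}\times(\mathbb{P}^1)^{n-1})_i$ for distinct $p_1,\dots,p_m\in\mathbb{P}^1$; there the double cover is $X_C\simeq C^n/(N'\rtimes S_n)$ for the hyperelliptic curve $C\to\mathbb{P}^1$ branched at the $p_i$, and one proves $H^n(X_C,\mathbb{F}_\ell)_{(1)}\cong\wedge^n H^1(C,\mathbb{F}_\ell)$. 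Yu's theorem then embeds $\wedge^n\operatorname{Sp}(H^1(C,\mathbb{F}_\ell))$ in $\Gamma$, and any transvection $\varphi\in\operatorname{Sp}(H^1(C,\mathbb{F}_\ell))$ gives $\wedge^n\varphi\in\Gamma$ with $(\wedge^n\varphi-1)^2=0$. Your proposal has nothing playing this role, so the classification step is incomplete.

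Second, your $S_m$ argument shows only that the vanishing cycles are conjugate under the image of $\pi_1(\mathfrak{M}/S_m)$, not under $\Gamma$ itself; the paper closes this gap by proving $\rho(\pi_1(\widetilde{\mathfrak{M}}/S_m))=\rho(\pi_1(\widetilde{\mathfrak{M}}))$, again via the hyperelliptic locus and Yu's theorem. A smaller correction: the double cover $X$ is singular, so Picard--Lefschetz is not applied to $X$ directly but to the smooth Kummer cover $Y$ (a complete intersection of quadrics in $\mathbb{P}^{m-1}$ with $Y/N\cong X$). The degenerate fiber of $Y$ acquires $2^{m-n-1}$ ordinary double points forming a single $N$-orbit, and the vanishing cycle on $X$ is the average of the corresponding $\delta_i$, with self-intersection $(-1)^{n/2}/2^{m-n-2}$ rather than $(-1)^{n/2}\cdot 2$. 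Since $m-n-2$ is even this extra factor is a square and your spinor-norm dichotomy survives, but the ``single vanishing cycle'' picture is not literally what happens.
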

If $m=2n+2$, the family $\mathcal{X} \rightarrow \mathfrak{M}$ is a Calabi--Yau family, and in the complex number field case, its monodromy group and variation of Hodge structures are studied in \cite{gerkmann2013monodromy,sheng2015monodromy,xu2018zariski,castor2024remarks}.

As a consequence of \Cref{thm:monodromy for m}, when $n$ is odd, the $\ell$-adic monodromy group is the full symplectic group
$\operatorname{Sp}(H^n(X,\mathbb{Z}_{\ell}))$ (see \Cref{cor:ell adic monodromy}). 
Together with Chavdarov's theorem \cite{chavdarov1997generic}, this implies that,
over a finite field, the numerator of the zeta function of a ``typical'' such double
cover is irreducible (see \Cref{prop:zeta function}).

\noindent\textbf{Acknowledgements.} This work was partially supported by the CAS Project for Young Scientists in Basic Research (Grant No.~YSBR-032) and by the NSFC (Grant Nos.~12271495, 12341105). Part of this work was carried out while the second-named author was visiting the Institut de Mathématiques de Toulouse, supported by the China Scholarship Council (CSC). He would like to thank the institute for providing an excellent research environment, and Prof.~Laurent Manivel for his warm hospitality. In particular, he is grateful to Prof.~Manivel for helpful discussions on the cohomology of symmetric products.

\section{The Covers}

\subsection{Double cover}\label{subsection:double_cover}

Consider an ordered arrangement $\mathfrak{A} = (H_1, \ldots, H_m)$ of hyperplanes in $\mathbb{P}_{\mathbb{K}}^n$. 
We say that $\mathfrak{A}$ is in general position if no $n+1$ hyperplanes meet at a single point, or equivalently, if the divisor $\sum_{i=1}^m H_i$ has simple normal crossings. 
Denote by $\mathfrak{M}$ the coarse moduli space of ordered $m$-tuples of hyperplane arrangements in $\mathbb{P}_{\mathbb{K}}^n$ in general position.

By the fundamental theorem  of projective geometry, every point of $\mathfrak{M}$ can be represented by 
a unique arrangement $\mathfrak{A}= (H_1, \ldots, H_m)$ encoded in the columns of the following $(n+1)\times m$ matrix:
\begin{equation}\label{eq:matrix of bij}
(b_{ij}) =
\begin{pmatrix}
1 & 0 & \cdots & 0 & 1 & 1 & \cdots & 1 \\ 
0 & 1 & \cdots & 0 & 1 & a_{11} & \cdots & a_{1,m-n-2} \\ 
\vdots & \vdots & \ddots & 0 & \vdots & \vdots & \ddots & \vdots \\ 
0 & 0 & \cdots & 1 & 1 & a_{n1} & \cdots & a_{n,m-n-2} \\ 
\end{pmatrix}.
\end{equation}
Here the $j$-th column corresponds to the defining equation 
\begin{equation}\label{eq:hyperplane equation}
\sum_{i=0}^n b_{ij} x_i = 0
\end{equation}
of the hyperplane $H_j$, where $[x_0 : \cdots : x_n]$ are homogeneous coordinates on $\mathbb{P}_{\mathbb{K}}^n$. 
By definition, $\mathfrak{A}$ is in general position if and only if every $(n+1)\times(n+1)$ minor of $(b_{ij})$ is nonzero. 
Consequently, $\mathfrak{M}$ can be realized as an open subvariety of the affine space $\mathbb{A}_{\mathbb{K}}^{\,n(m-n-2)}$.

 Let $\widetilde{\mathfrak{M}}\subset\mathbb{A}_\mathbb{K}^{(n+1)m}=\mathrm{Spec}~\mathbb{K}[t_{ij}\mid0\le i\le n,1\le j\le m]$ be the open subset defined by $\Delta\ne0$, where $\Delta=\prod_{1\le j_0<\cdots<j_n\le m}\det(t_{i,j_r})_{0\le i,r\le n}$. By viewing $(t_{ij})$ as an $(n+1)\times m$ matrix, its columns determine a hyperplane arrangement in $\mathbb{P}_{\mathbb{K}}^n$ in general position. By the matrix realization \eqref{eq:matrix of bij}, $\mathfrak{M}$ is a closed subscheme of $\widetilde{\mathfrak{M}}$. 
Let $\mathbb{P}_{\mathbb{K}}(\frac{m}{2},1,\dots,1)=\mathrm{Proj}~\mathbb{K}[x,x_0,\dots,x_n]$ denote the weighted projective space with weights $\frac{m}{2},1,\dots,1$, where the variable $x$ has weight $\frac{m}{2}$ and each $x_i$ has weight $1$. 
Let $\widetilde{\mathcal{X}}$ be the closed subscheme of $\mathbb{P}_{\mathbb{K}}(\frac{m}{2},1,\dots,1)\times \widetilde{\mathfrak{M}}$ defined by the homogeneous ideal generated by
\[
x^2-\prod_{j=1}^m \left(\sum_{i=0}^n t_{ij}x_i\right),
\]
and let $\widetilde{f}:\widetilde{\mathcal{X}}\to \widetilde{\mathfrak{M}}$ be the morphism given by the composition of the inclusion $\widetilde{\mathcal{X}}\subseteq\mathbb{P}_{\mathbb{K}}(\frac{m}{2},1,\dots,1)\times \widetilde{\mathfrak{M}}$ with the projection to $\widetilde{\mathfrak{M}}$. Let $f:\mathcal{X}\to \mathfrak{M}$ be obtained from $\widetilde{f}:\widetilde{\mathcal{X}}\to \widetilde{\mathfrak{M}}$ by the base change $\mathfrak{M} \to \widetilde{\mathfrak{M}}$. 
For any closed point $\mathfrak{A}\in\mathfrak{M}$, the fiber $f^{-1}(\mathfrak{A})$ is the double cover of $\mathbb{P}_{\mathbb{K}}^n$ branched along $m$ hyperplanes in general position corresponding to $\mathfrak{A}$ (cf.~\cite[\S 3.5]{esnault1992lectures}). 
We call $f$ the family of double covers of $\mathbb{P}_{\mathbb{K}}^n$ branched along $m$ hyperplanes in general position.

\begin{lemma}\label{lemma:locally constant sheaf}
$R^n f_{*} \mathbb{F}_{\ell}$ and $R^n \widetilde{f}_{*} \mathbb{F}_{\ell}$ are finite locally constant \'{e}tale sheaves.
\end{lemma}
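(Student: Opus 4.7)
First I would verify that $\widetilde{f}$ is proper: $\widetilde{\mathcal{X}}$ is cut out by one homogeneous equation inside $\mathbb{P}_{\mathbb{K}}(\tfrac{m}{2},1,\ldots,1)\times\widetilde{\mathfrak{M}}$, and the latter is proper over $\widetilde{\mathfrak{M}}$ because weighted projective spaces are proper over $\mathbb{K}$; hence $\widetilde{f}$, and by base change $f$, are proper. By proper base change, $R^n\widetilde{f}_*\mathbb{F}_\ell$ is a constructible $\mathbb{F}_\ell$-sheaf with finite stalks $H^n(X_s,\mathbb{F}_\ell)$, and $R^nf_*\mathbb{F}_\ell$ is its pullback along the closed immersion $\mathfrak{M}\hookrightarrow\widetilde{\mathfrak{M}}$. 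Since the pullback of a locally constant sheaf is locally constant, it suffices to prove the statement for $\widetilde{f}$.

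To show local constancy of $R^n\widetilde{f}_*\mathbb{F}_\ell$, I would factor $\widetilde{f}=p\circ\widetilde{\pi}$, where $\widetilde{\pi}\colon\widetilde{\mathcal{X}}\to\mathbb{P}^n_{\mathbb{K}}\times\widetilde{\mathfrak{M}}$ is the relative degree-two cover and $p$ is the second projection. Since $\ell$ is odd, the canonical involution on $\widetilde{\pi}$ decomposes $\widetilde{\pi}_*\mathbb{F}_\ell=\mathbb{F}_\ell\oplus\mathcal{G}$ into $\pm$-eigensheaves. A stalk computation shows that $\mathcal{G}$ vanishes at every geometric point of the universal arrangement $\mathcal{D}\subset\mathbb{P}^n_{\mathbb{K}}\times\widetilde{\mathfrak{M}}$ (because over $\mathcal{D}$ the fibre of $\widetilde{\pi}$ is a single geometric point on which the involution acts trivially) and restricts on the complement $\mathcal{U}$ to the rank-one sign local system $\mathcal{L}$ attached to the étale double cover $\widetilde{\pi}|_{\widetilde{\pi}^{-1}(\mathcal{U})}$. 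Hence $\mathcal{G}=j_!\mathcal{L}$ for $j\colon\mathcal{U}\hookrightarrow\mathbb{P}^n_{\mathbb{K}}\times\widetilde{\mathfrak{M}}$, and
\[
R\widetilde{f}_*\mathbb{F}_\ell = Rp_*\mathbb{F}_\ell \,\oplus\, Rp_*(j_!\mathcal{L}).
\]
The first summand is constant on $\widetilde{\mathfrak{M}}$ by smooth proper base change for $p$ (combined with the cohomology of $\mathbb{P}^n_{\mathbb{K}}$). For the second summand, the key geometric input is that $\mathcal{D}\subset\mathbb{P}^n_{\mathbb{K}}\times\widetilde{\mathfrak{M}}$ is a \emph{relative} strict normal crossings divisor over $\widetilde{\mathfrak{M}}$ — this is exactly the general-position hypothesis — so étale-locally on $\widetilde{\mathfrak{M}}$ one can straighten the arrangement and trivialize the pair $(\mathbb{P}^n_{\mathbb{K}}\times\widetilde{\mathfrak{M}},\mathcal{D})$ together with $\mathcal{L}$, yielding local constancy of every $R^ip_*(j_!\mathcal{L})$, in particular for $i=n$.

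The main obstacle is the last step: upgrading the relative SNC structure of the universal arrangement to an étale-local trivialization of the pair $(\mathbb{P}^n_{\mathbb{K}}\times\widetilde{\mathfrak{M}},\mathcal{D})$ together with $\mathcal{L}$. Concretely, I expect to do this by picking étale-locally $n+1$ of the hyperplanes that are transverse at a common point, using them as a coordinate frame to bring the arrangement to a fixed reference configuration, and then invoking proper base change. As a back-up, one can first establish the $\mathbb{K}=\mathbb{C}$ case by a Thom--Mather/Verdier stratified isotopy applied to the equisingular family $\widetilde{f}$ and then spread out to arbitrary $\mathbb{K}$ with $2\ell$ invertible, since both $\widetilde{\mathfrak{M}}$ and the family are defined over $\mathbb{Z}[\tfrac{1}{2\ell}]$.
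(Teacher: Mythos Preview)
Your approach is genuinely different from the paper's, and the hard step you identify is exactly where the two diverge.

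The paper does not analyze $R\widetilde f_*\mathbb F_\ell$ directly via the double-cover structure. Instead it introduces an auxiliary \emph{Kummer cover} $F\colon\mathcal Y\to\widetilde{\mathfrak M}$: fiberwise, $\mathcal Y_s$ is the $(\mathbb Z/2\mathbb Z)^m$-cover of $\mathbb P^n$ branched along the arrangement, realized concretely as a complete intersection of quadrics in $\mathbb P^{m-1}$. The key point is that $\mathcal Y\to\widetilde{\mathfrak M}$ is \emph{smooth} and proper (a direct Jacobian computation), so $R^nF_*\mathbb F_\ell$ is locally constant by smooth proper base change with no further work. One then has $\widetilde{\mathcal X}\simeq\mathcal Y/N$ for a finite $2$-group $N$, and since $\ell$ is odd, $R^n\widetilde f_*\mathbb F_\ell\simeq (R^nF_*\mathbb F_\ell)^N$ is a direct summand of a locally constant sheaf, hence locally constant.

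Your route can be made to work, but the ``straightening'' step is not correct as stated. The pair $(\mathbb P^n,\sum H_i)$ genuinely has moduli once $m>n+2$, so one cannot trivialize $(\mathbb P^n\times\widetilde{\mathfrak M},\mathcal D)$ \'etale-locally on $\widetilde{\mathfrak M}$; choosing $n{+}1$ hyperplanes as a coordinate frame still leaves the remaining $m{-}n{-}1$ hyperplanes varying. What \emph{is} true is that $\mathcal D$ is a relative strict normal crossings divisor and $\mathcal L$ is tame of order $2$ along it; local constancy of $R^ip_*(j_!\mathcal L)$ then follows from tame/log-smooth local acyclicity results, or (as in your fallback) from Thom isotopy over $\mathbb C$ plus spreading out over $\mathbb Z[\tfrac1{2\ell}]$. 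Either way this step requires real input, whereas the Kummer-cover trick sidesteps the issue entirely by manufacturing a genuinely smooth family upstairs and then descending via $N$-invariants.
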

We postpone the proof of this lemma to \Cref{subsec:Kummer covering}. 
\subsection{Kummer cover}\label{subsec:Kummer covering}
Consider the weighted projective space 
\[\mathbb{P}_{\mathbb{K}}(1,\dots,1,2,\dots,2),\]
which has weighted homogeneous coordinates $[y_1:\dots:y_m:x_0:\dots:x_n]$, where each $y_i$ has weight $1$ and each $x_i$ has weight $2$. 
In the product space $\widetilde{\mathfrak{M}}\times \mathbb{P}_{\mathbb{K}}(1,\dots,1,2,\dots,2)$, define a closed subscheme $\mathcal{Y}$ by the ideal generated by the elements $y_j^2-\ell_j$ for $j=1,\dots,m$, where $\ell_j=\mathop{\sum}\limits_{i=0}^{n}t_{ij}x_i$ is the defining linear form of the hyperplane $H_j$. 
\begin{lemma}\label{lemma:smooth}
The projection morphism \(\mathcal{Y}\to\widetilde{\mathfrak{M}}\) is proper and smooth. 
Furthermore, for any closed point \(s\in\widetilde{\mathfrak{M}}\), the fiber 
\(\mathcal{Y}_s\) is a smooth complete intersection inside 
\(\mathbb{P}^{m-1}_{\mathbb{K}}\).
\end{lemma}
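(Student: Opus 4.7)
Properness of $\mathcal{Y} \to \widetilde{\mathfrak{M}}$ is immediate: the weighted projective space $\mathbb{P}_{\mathbb{K}}(1,\dots,1,2,\dots,2)$ is proper over $\mathbb{K}$, so the product projection $\widetilde{\mathfrak{M}} \times \mathbb{P}_{\mathbb{K}}(1,\dots,1,2,\dots,2) \to \widetilde{\mathfrak{M}}$ is proper, and $\mathcal{Y}$ is a closed subscheme. The substance of the lemma is the smoothness and complete-intersection claims, which I would treat via an affine cone and a direct projection to $\mathbb{P}^{m-1}_{\mathbb{K}}$, respectively.

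For the fiber smoothness, fix $s = (t_{ij}) \in \widetilde{\mathfrak{M}}$ and study the affine cone $C_s \subset \mathbb{A}^{m+n+1}_{\mathbb{K}}$ cut out by $F_j := y_j^2 - \ell_j(x) = 0$. The preliminary observation is that at any point of $C_s \setminus \{0\}$ the set $S := \{j : y_j = 0\}$ satisfies $|S| \le n$: indeed $\ell_j(x) = y_j^2 = 0$ for $j \in S$, and if $|S| \ge n+1$ then general position forces $x = 0$, whence $y = 0$, contradicting nontriviality. Given this bound, the Jacobian of the $F_j$'s has full rank $m$ at every point of $C_s \setminus \{0\}$: the $m-|S|$ rows with $j \notin S$ are linearly independent because of the distinct nonzero entries $2y_j$ in separate $y$-columns, while the remaining $|S|$ rows involve only the columns $(t_{i,j})_i$ for $j \in S$, which are linearly independent since $|S| \le n$ and any $n+1$ columns of $(t_{ij})$ are independent by general position. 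Hence $C_s \setminus \{0\}$ is smooth of dimension $n+1$; the $\mathbb{G}_m$-action $\lambda \cdot (y,x) = (\lambda y, \lambda^2 x)$ is free there (some $y_j \ne 0$ forces $\lambda = 1$), so $\mathcal{Y}_s$ is smooth of dimension $n$. Smoothness of the morphism $\mathcal{Y} \to \widetilde{\mathfrak{M}}$ then follows from fiber smoothness plus flatness, and flatness is itself immediate because $\mathcal{Y}$ is cut out inside the smooth relative scheme $\widetilde{\mathfrak{M}} \times \mathbb{P}_{\mathbb{K}}(1,\dots,1,2,\dots,2) \to \widetilde{\mathfrak{M}}$ by $m$ equations with fibers of the expected codimension $m$.

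To realize $\mathcal{Y}_s$ as a complete intersection inside $\mathbb{P}^{m-1}_{\mathbb{K}}$, consider the projection $\pi : \mathcal{Y}_s \to \mathbb{P}^{m-1}_{\mathbb{K}}$ sending $[y_1:\cdots:y_m:x_0:\cdots:x_n]$ to $[y_1:\cdots:y_m]$; its well-definedness is exactly the condition that $y \ne 0$ at every point of $\mathcal{Y}_s$, a special case of $|S| \le n$. Choose $n+1$ indices $j_0, \ldots, j_n$ such that the $(n+1)\times(n+1)$ submatrix $M_J := (t_{i,j_r})_{i,r}$ is invertible (all such submatrices are invertible by general position); the equations $y_{j_r}^2 = \ell_{j_r}(x)$ then yield $x_i = \sum_r (M_J^{-1})_{ir} y_{j_r}^2$ uniformly on $\mathcal{Y}_s$, and substituting into the remaining $m-n-1$ equations produces quadratic relations in the $y$'s alone, of the shape $y_j^2 = \sum_r \alpha_{j,r} y_{j_r}^2$ for $j \notin \{j_0,\ldots,j_n\}$. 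A direct verification shows that $\pi$ is an isomorphism onto the subscheme $Z \subset \mathbb{P}^{m-1}_{\mathbb{K}}$ cut out by these $m-n-1$ quadrics, with inverse $[y] \mapsto [y : X_0(y) : \cdots : X_n(y)]$ where $X_i(y) := \sum_r (M_J^{-1})_{ir} y_{j_r}^2$; since $\dim \mathcal{Y}_s = n = (m-1) - (m-n-1)$, the subscheme $Z$ has the expected codimension and is therefore a complete intersection.

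The main obstacle is the Jacobian rank computation, which requires combining two consequences of general position — the bound $|S| \le n$ on simultaneous vanishing of the $y_j$'s and the linear independence of any $\le n+1$ columns of $(t_{ij})$. Once this rank argument is in hand, the free $\mathbb{G}_m$-quotient and the dimension count that certifies $Z$ as a complete intersection are routine.
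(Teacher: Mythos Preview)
Your proof is correct and follows essentially the same strategy as the paper's: both hinge on the observation that general position forces at most $n$ of the $y_j$ to vanish at any point of $\mathcal{Y}_s$, and both use this to verify a Jacobian rank condition. The only organizational difference is that the paper eliminates the $x$-variables at the outset (substituting $x'_j = y_j^2$ for $1 \le j \le n+1$) and thereby handles smoothness and the complete-intersection realization in $\mathbb{P}^{m-1}_{\mathbb{K}}$ in a single step, whereas you first establish smoothness in the affine cone with all $m$ equations and then project to $\mathbb{P}^{m-1}_{\mathbb{K}}$ separately.
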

\begin{proof}
 Since $\mathcal{Y}$ is a closed subscheme of $\widetilde{\mathfrak{M}}\times \mathbb{P}_{\mathbb{K}}(1,\dots,1,2,\dots,2)$, and the projection morphism $\widetilde{\mathfrak{M}}\times \mathbb{P}_{\mathbb{K}}(1,\dots,1,2,\dots,2) \rightarrow \widetilde{\mathfrak{M}}$ is projective, it follows that $\mathcal{Y} \rightarrow \widetilde{\mathfrak{M}}$ is projective, and hence proper.
 On the space $\widetilde{\mathfrak{M}}\times \mathbb{P}_{\mathbb{K}}(1,\dots,1,2,\dots,2)$, for $j=1,\dots,n+1$, let $x_j^{\prime}=\ell_j=\mathop{\sum}\limits_{i=0}^{n}t_{ij}x_i$. 
Then $(t_{ij}, [y_1:\dots:y_m:x_1^{\prime}:\dots:x_{n+1}^{\prime}])$ can be used as coordinates on $\widetilde{\mathfrak{M}}\times \mathbb{P}_{\mathbb{K}}(1,\dots,1,2,\dots,2)$. 
Under these new coordinates, the defining equations of $\mathcal{Y}$ are:
 \begin{equation}\label{equ:equations of Y 1}
 	\begin{cases}
 		y_j^2=x_j ^{\prime}, \ j=1,\dots, n+1; \\ 
 		y_j^2=\mathop{\sum }\limits_{i=1}^{n+1}t_{ij}^{\prime}x_i ^{\prime}, \ j=n+2,\dots, m,
 	\end{cases}
 \end{equation}
 where each $t_{ij}^{\prime}$ is a regular function on $\widetilde{\mathfrak{M}}$, and the determinant of any $n+1$ columns of the following matrix is invertible on $\widetilde{\mathfrak{M}}$:
 \begin{equation}\label{equ:matrix for Y}
 \begin{pmatrix}
1 & 0 & \cdots & 0  & t_{1, n+2} ^{\prime} & \cdots & t_{1,m}^{\prime} \\ 
0 & 1 & \cdots & 0  & t_{2,n+2}^{\prime} & \cdots & t_{2,m}^{\prime} \\ 
\vdots & \vdots & \ddots & 0  & \vdots & \ddots & \vdots \\ 
0 & 0 & \cdots & 1  & t_{n+1,n+2}^{\prime} & \cdots & t_{n+1,m}^{\prime} \\ 
\end{pmatrix}.
\end{equation}
From the equations \eqref{equ:equations of Y 1}, we deduce that $\mathcal{Y}$ can be realized as the complete intersection in $\widetilde{\mathfrak{M}}\times \mathbb{P}_{\mathbb{K}}^{m-1}$ defined by the following equations:
\[y_j^2=\mathop{\sum }\limits_{i=1}^{n+1}t_{ij}^{\prime}y_i^2, \ j=n+2,\dots, m.\]
For \( j = n+2, \dots, m \), define \( f_j = y_j^2 - \sum_{i=1}^{n+1} t_{ij}' y_i^2 \). 
Then the Jacobian matrix  
\( J=\left( \frac{\partial f_i}{\partial y_j} \right)_{\substack{n+2 \le i \le m \\ 1 \le j \le m}} \) is:
\begin{equation*}
	\begin{pmatrix}
		2y_1 t_{1,n+2}^{\prime} & 2y_2t_{2,n+2}^{\prime}&\cdots &2y_{n+1}t_{n+1,n+2}^{\prime} & 2y_{n+2} &0 &\cdots &0\\ 
		2y_1 t_{1,n+3}^{\prime} & 2y_2t_{2,n+3}^{\prime}&\cdots &2y_{n+1}t_{n+1,n+3}^{\prime} & 0 &2y_{n+3} &\cdots &0\\
		\vdots & \vdots& &\vdots & \vdots & &\ddots &\vdots\\
		2y_1 t_{1,m}^{\prime} & 2y_2t_{2,m}^{\prime}&\cdots &2y_{n+1}t_{n+1,m}^{\prime} & 0 &\cdots & &2y_m
  \end{pmatrix}
\end{equation*}
Since the determinant of each $n+1$ columns of the matrix \eqref{equ:matrix for Y} is invertible on $\widetilde{\mathfrak{M}}$, we see that at each point of $\mathcal{Y}$ there exist at least $n+1$ indices $j$ such that $y_j \neq 0$. It then follows that the Jacobian matrix $J$ is of full rank at any point of $\mathcal{Y}$. Therefore, the morphism $\mathcal{Y} \rightarrow \widetilde{\mathfrak{M}}$ is smooth.
\end{proof}
We call each fiber $\mathcal{Y}_s$ a Kummer cover of $\mathbb{P}_{\mathbb{K}}^n$.
Let $N$ be the kernel of the summation homomorphism 
\[(\mathbb{Z}/2\mathbb{Z})^m \to \mathbb{Z}/2\mathbb{Z},\ (a_i)\mapsto \mathop{\sum }\limits_{i=1}^{m} a_i.\] 
For $g=(a_1,\dots,a_m)\in N$ and a point $p\in\mathcal{Y}$ with homogeneous coordinates 
$[y_1:\dots:y_m:x_0:\dots:x_n]$, define 
\[g\cdot p=[(-1)^{a_1}y_1:\dots:(-1)^{a_m}y_m:x_0:\dots:x_n].\] 
Then the map $p\mapsto g\cdot p$ defines an action of $N$ on $\mathcal{Y}$.

\begin{proposition}\label{prop:X realized as quotient of Y}
There exists an isomorphism $\phi$ making the following diagram commute:
\[\begin{tikzcd}
	{\mathcal{Y}/N} && {\widetilde{\mathcal{X}}} \\
	\\
	{\widetilde{\mathfrak{M}}}
	\arrow["\phi", "\sim"', from=1-1, to=1-3]
	\arrow[from=1-1, to=3-1]
	\arrow["{\widetilde{f}}", from=1-3, to=3-1]
\end{tikzcd}\]
\end{proposition}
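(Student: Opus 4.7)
The strategy is to build $\phi$ in two stages: first produce an explicit morphism $\phi_0 \colon \mathcal{Y} \to \widetilde{\mathcal{X}}$, check it is $N$-invariant so it factors through the quotient, and then identify $\widetilde{\mathcal{X}}$ with the categorical quotient $\mathcal{Y}/N$ by an invariant-theoretic computation.

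\emph{Step 1 (defining $\phi_0$).} Given a point $(t_{ij}, [y_1:\cdots:y_m:x_0:\cdots:x_n])$ of $\mathcal{Y}\subset \widetilde{\mathfrak{M}}\times\mathbb{P}_{\mathbb{K}}(1,\dots,1,2,\dots,2)$, form the monomial $x := y_1y_2\cdots y_m$, of weighted degree $m$ with respect to weights $(1,\dots,1,2,\dots,2)$. Since the weights $m,2,\dots,2$ of the tuple $(x,x_0,\dots,x_n)$ are all even, dividing by $2$ produces a well-defined point $(t_{ij},[x:x_0:\cdots:x_n])$ of $\widetilde{\mathfrak{M}}\times \mathbb{P}_{\mathbb{K}}(\frac{m}{2},1,\dots,1)$. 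The defining relations $y_j^2=\ell_j$ on $\mathcal{Y}$ yield $x^2=\prod_{j=1}^{m}y_j^2=\prod_{j=1}^{m}\ell_j$, so this morphism $\phi_0$ lands in $\widetilde{\mathcal{X}}$ and commutes with the projections to $\widetilde{\mathfrak{M}}$.

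\emph{Step 2 ($N$-invariance).} For any $g=(a_1,\dots,a_m)\in N$ the variables $x_0,\dots,x_n$ are fixed, while the product $y_1\cdots y_m$ is multiplied by $(-1)^{\sum_i a_i}=1$. Hence $\phi_0$ is constant on $N$-orbits. Since $N$ is a finite group acting on the quasi-projective scheme $\mathcal{Y}$ over $\widetilde{\mathfrak{M}}$, the quotient $\mathcal{Y}/N$ exists, and $\phi_0$ descends to a morphism $\phi \colon \mathcal{Y}/N \to \widetilde{\mathcal{X}}$ over $\widetilde{\mathfrak{M}}$.

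\emph{Step 3 (isomorphism).} I would prove $\phi$ is an isomorphism by a graded invariant ring computation. Setting $A:=\Gamma(\widetilde{\mathfrak{M}},\mathcal{O}_{\widetilde{\mathfrak{M}}})$, the bigraded homogeneous coordinate ring of $\mathcal{Y}$ is $R:=A[y_1,\dots,y_m,x_0,\dots,x_n]/(y_j^2-\ell_j)_{j=1}^{m}$, with $\deg y_j=1$ and $\deg x_i=2$. A monomial $y_1^{c_1}\cdots y_m^{c_m}$ is multiplied under $(a_1,\dots,a_m)$ by $(-1)^{\sum_i a_i c_i}$; the functional $\sum_i a_i c_i$ vanishes on the hyperplane $N\subset (\mathbb{F}_2)^m$ precisely when $(c_1,\dots,c_m)\equiv (0,\dots,0)$ or $(1,\dots,1)\pmod 2$. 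Thus the $N$-invariant monomials in the $y_j$ are generated by the $y_j^2$ together with $y_1\cdots y_m$; using the relations $y_j^2=\ell_j$, the invariant subring $R^{N}$ is generated over $A$ by $x_0,\dots,x_n$ (degree $2$) and $x:=y_1\cdots y_m$ (degree $m$), subject to the single relation $x^2=\prod_{j=1}^{m}\ell_j$. Halving the grading identifies $R^{N}$ with the homogeneous coordinate ring of $\widetilde{\mathcal{X}}\subset \mathbb{P}_{\mathbb{K}}(\frac{m}{2},1,\dots,1)\times\widetilde{\mathfrak{M}}$, and $\phi$ is the induced map, hence an isomorphism.

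\textbf{Main obstacle.} The step requiring most care is Step 3: passing from the invariant-ring statement to a genuine isomorphism of schemes in the presence of weighted projective coordinates and the base $\widetilde{\mathfrak{M}}$. I would handle this by working on the standard affine charts $\{y_j\neq 0\}\subset \mathbb{P}_{\mathbb{K}}(1,\dots,1,2,\dots,2)$, on which the $N$-action becomes an action of a finite $2$-group on an affine scheme over $\widetilde{\mathfrak{M}}$; the quotient is represented by the ring of invariants and the map $\phi$ can be checked chart-by-chart to be the corresponding spectrum of the invariant subring. This reduces the global assertion to the elementary algebraic computation of $R^{N}$ carried out above.
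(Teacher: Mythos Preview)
Your proposal is correct and follows the same approach as the paper: define $\phi$ by $[y_1:\cdots:y_m:x_0:\cdots:x_n]\mapsto[\prod y_j:x_0:\cdots:x_n]$, check $N$-invariance, and conclude that the induced map on the quotient is an isomorphism. The paper's own proof records only the formula and declares the isomorphism ``straightforward to verify''; your Step~3 supplies exactly that verification via the invariant-ring computation $R^{N}\cong A[x_0,\dots,x_n,x]/(x^2-\prod_j\ell_j)$, together with the affine-chart reduction needed to pass from $\operatorname{Proj}(R^N)$ to $\mathcal{Y}/N$.
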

\begin{proof}
Using homogeneous coordinates, we define 
$\phi \colon \mathcal{Y} \longrightarrow \widetilde{\mathcal{X}}$ by 
\[[y_1:\dots:y_m:x_0:\dots:x_n] \mapsto [\prod_{i=1}^{m} y_i : x_0 : \dots : x_n].\] 
It is straightforward to verify that $\phi$ is $N$-invariant, and it induces an isomorphism 
from $\mathcal{Y}/N$ to $\widetilde{\mathcal{X}}$. 
\end{proof}

\begin{proof}[Proof of \Cref{lemma:locally constant sheaf}]
 Let $F\colon \mathcal{Y}\rightarrow \widetilde{\mathfrak{M}}$ denote the morphism. By \Cref{lemma:smooth} and \cite[Chap.~I, Thm.~8.9]{freitag1987etale}, the higher direct image $R^n F_*\mathbb{F}_\ell$ is a finite locally constant sheaf on $\widetilde{\mathfrak{M}}$. By \Cref{prop:X realized as quotient of Y}, $F$ factors as $\mathcal{Y} \stackrel{\pi}{\longrightarrow} \mathcal{Y}/N \xrightarrow[\sim]{\phi} \widetilde{\mathcal{X}} \stackrel{\tilde{f}}{\longrightarrow} \widetilde{\mathfrak{M}}$, where $\pi$ is the natural quotient. Let $\pi_1=\phi\circ\pi$. Since $\pi_1$ is finite, the Leray spectral sequence gives $R^n F_*\mathbb{F}_\ell\simeq R^n \tilde{f}_*(\pi_{1*}\mathbb{F}_\ell)$. The group $N$ acts naturally on $\pi_{1*}\mathbb{F}_\ell$, and $(\pi_{1*}\mathbb{F}_\ell)^N\simeq\mathbb{F}_\ell$ on $\widetilde{\mathcal{X}}$. Hence $N$ acts on $R^n F_*\mathbb{F}_\ell$, and by complete reducibility (note $\ell\nmid|N|$), we have $(R^n F_*(\mathbb{F}_\ell))^N\simeq R^n \tilde{f}_*((\pi_{1*}\mathbb{F}_\ell)^N)\simeq R^n \tilde{f}_*\mathbb{F}_\ell$. Thus $R^n \tilde{f}_*(\mathbb{F}_\ell)$ is finite locally constant. Since $f\colon \mathcal{X}\rightarrow \mathfrak{M}$ is a base change of $\tilde{f}$ and $\tilde{f}$ is proper, it follows that $R^n f_*\mathbb{F}_\ell$ is also finite locally constant.
\end{proof}

Let $X=\widetilde{\mathcal{X}}_s$ be the fiber over a closed point 
$s\in \widetilde{\mathfrak{M}}$.

\begin{proposition}\label{prop:Hi of X for i not n}
The double covering 
\(X \to \mathbb{P}^n_{\mathbb{K}}\) induces an isomorphism 
\[
H^{i}(X,\mathbb{F}_\ell)\;\cong\;
H^{i}(\mathbb{P}^n_{\mathbb{K}},\mathbb{F}_\ell),
\qquad
\forall\, 0\le i\le 2n\text{ with } i\neq n.
\]
Moreover, the Poincar\'e duality pairing
\[
Q \colon 
H^{n}(X,\mathbb{F}_\ell)_{(1)}
\times
H^{n}(X,\mathbb{F}_\ell)_{(1)}
\longrightarrow \mathbb{F}_\ell
\]
is non-degenerate.
\end{proposition}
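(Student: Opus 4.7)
The plan is to reduce everything to the Kummer cover $Y=\mathcal Y_s$, whose cohomology is controlled by weak Lefschetz. Write $\pi\colon X\to\mathbb{P}^n_{\mathbb{K}}$ for the double cover and $\sigma$ for the covering involution. Since $\pi$ is finite and $\ell\ne 2$, the sheaf $\pi_*\mathbb{F}_\ell$ splits into $\sigma$-isotypic summands with invariant part equal to $\mathbb{F}_\ell$ on $\mathbb{P}^n_{\mathbb{K}}$, and the Leray spectral sequence (which is concentrated in degree zero because $\pi$ is finite) produces a canonical decomposition
\[
H^i(X,\mathbb{F}_\ell)=H^i(\mathbb{P}^n_{\mathbb{K}},\mathbb{F}_\ell)\oplus H^i(X,\mathbb{F}_\ell)_{(1)},
\]
with the first summand the $\sigma$-invariants, pulled back via $\pi^*$ (injective because $\pi_*\pi^*$ is multiplication by $\deg\pi=2$, a unit in $\mathbb{F}_\ell$). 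The first assertion of the proposition therefore amounts to the vanishing $H^i(X,\mathbb{F}_\ell)_{(1)}=0$ for every $0\le i\le 2n$ with $i\ne n$.

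To obtain this vanishing, I would pass to $Y$. By \Cref{lemma:smooth}, $Y$ is a smooth complete intersection of $m-n-1$ quadrics in $\mathbb{P}^{m-1}_{\mathbb{K}}$ of dimension $n$, so weak Lefschetz gives $H^i(Y,\mathbb{F}_\ell)\cong H^i(\mathbb{P}^{m-1}_{\mathbb{K}},\mathbb{F}_\ell)$ for $i<n$, and Poincaré duality extends this to $n<i\le 2n$. The $N$-action on $Y$ is the restriction of a linear sign-change action on $\mathbb{P}^{m-1}_{\mathbb{K}}$, so it acts trivially on $H^*(\mathbb{P}^{m-1}_{\mathbb{K}},\mathbb{F}_\ell)$ (every automorphism of projective space preserves the hyperplane class), and therefore trivially on $H^i(Y,\mathbb{F}_\ell)$ for every $i\ne n$. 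Combining this with \Cref{prop:X realized as quotient of Y} and the transfer isomorphism $H^i(X,\mathbb{F}_\ell)\cong H^i(Y,\mathbb{F}_\ell)^N$ (valid since $|N|=2^{m-1}$ is coprime to $\ell$) yields $H^i(X,\mathbb{F}_\ell)\cong H^i(\mathbb{P}^n_{\mathbb{K}},\mathbb{F}_\ell)$ for every such $i$; matched against the decomposition above, this forces $H^i(X,\mathbb{F}_\ell)_{(1)}=0$.

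For the non-degeneracy of $Q$, I would use that $\sigma$ is an isometry of Poincaré duality. Being a biregular automorphism of the smooth proper $n$-fold $X$, $\sigma$ acts trivially on $H^{2n}(X,\mathbb{F}_\ell)\cong\mathbb{F}_\ell$, so $(\sigma^*\alpha,\sigma^*\beta)=(\alpha,\beta)$. If $\alpha$ lies in the $(+1)$-eigenspace and $\beta\in H^n(X,\mathbb{F}_\ell)_{(1)}$, this forces $(\alpha,\beta)=-(\alpha,\beta)$, and hence $(\alpha,\beta)=0$ since $\ell\ne 2$. The two eigenspaces are therefore Poincaré-orthogonal, and since the pairing is non-degenerate on $H^n(X,\mathbb{F}_\ell)$ with a direct-sum decomposition, its restriction to $H^n(X,\mathbb{F}_\ell)_{(1)}$ is non-degenerate as well. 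The step I expect to be most delicate is the appeal to weak Lefschetz for $\mathbb{F}_\ell$-cohomology of a smooth complete intersection in characteristic coprime to $\ell$, which is standard but requires Artin-type vanishing; the rest is formal bookkeeping with eigenspace decompositions and a finite group action of order coprime to $\ell$.
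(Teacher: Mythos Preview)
Your strategy matches the paper's: reduce to the smooth complete intersection $Y$, apply weak Lefschetz there (the paper cites \cite[Chap.~I, Cor.~9.4]{freitag1987etale} for exactly this step, so your worry about it is unfounded), and pull the result down via $H^i(X,\mathbb F_\ell)\cong H^i(Y,\mathbb F_\ell)^N$. The first half of your argument is correct.

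There is, however, one genuine error. You call $X$ a ``smooth proper $n$-fold'', but $X$ is \emph{not} smooth once $n\ge2$: at a point of $\mathbb P^n_{\mathbb K}$ lying on two or more of the branch hyperplanes the local equation of the double cover is $x^2=z_1z_2\cdot(\text{unit})$, which is singular along $x=z_1=z_2=0$. Since $m\ge n+3$, such points always exist. Your clause ``since the pairing is non-degenerate on $H^n(X,\mathbb F_\ell)$'' is therefore unjustified---Poincar\'e duality on $X$ cannot be invoked directly.

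The paper avoids this by running the non-degeneracy argument on $Y$ rather than on $X$. Poincar\'e duality on the smooth projective variety $Y$ is non-degenerate and is preserved by the full sign-change group $(\mathbb Z/2\mathbb Z)^m$, which acts trivially on $H^{2n}(Y,\mathbb F_\ell)$. Because every character of a $2$-group equals its own inverse, the isotypic decomposition of $H^n(Y,\mathbb F_\ell)$ is orthogonal for the pairing, so each piece inherits a non-degenerate form; the piece for the nontrivial character of $(\mathbb Z/2\mathbb Z)^m/N$ is exactly $H^n(X,\mathbb F_\ell)_{(1)}$. An alternative repair of your argument is to note that $X=Y/N$ with $\ell\nmid |N|$ is an $\mathbb F_\ell$-homology manifold, after which your eigenspace computation on $X$ goes through; but that needs its own justification and is not the route the paper takes.
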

\begin{proof}
 Let \(Y=\mathcal{Y}_s\) be the fiber of \(\mathcal{Y}\rightarrow \widetilde{\mathfrak{M}}\) over \(s\).
By \Cref{lemma:smooth,prop:X realized as quotient of Y}, the variety \(Y\) is a smooth complete intersection in 
\(\mathbb{P}^{m-1}_{\mathbb{K}}\), and \(X\simeq Y/N\).
Since \(\ell\nmid |N|\), we have
\[
H^i(X,\mathbb{F}_\ell)\;\simeq\; H^i(Y,\mathbb{F}_\ell)^N.
\]
The Poincaré duality pairing on \(Y\) is non-degenerate; hence it induces a non-degenerate pairing on 
\(H^n(X,\mathbb{F}_\ell)_{(1)}\), and we also have 
\(H^i(X,\mathbb{F}_\ell)\simeq H^{2n-i}(X,\mathbb{F}_\ell)\) for all \(i\).
By the Weak Lefschetz Theorem (cf.~\cite[Chap.~I, Cor.~9.4]{freitag1987etale}), for every \(0\le i\le n-1\) the inclusion 
\(Y\hookrightarrow \mathbb{P}^{m-1}_{\mathbb{K}}\) induces an isomorphism
\[
H^i(Y,\mathbb{F}_\ell)\;\simeq\; H^i(\mathbb{P}^{m-1}_{\mathbb{K}},\mathbb{F}_\ell).
\]
Thus the assertion follows.
\end{proof}

\section{A Picard-Lefschetz type formula}
\subsection{Tame fundamental group}\
In this subsection, we review some basic facts about the tame fundamental group and fix our notation. Our primary reference is \cite{freitag1987etale} and \cite{grothendieck1971tame}; see also \cite{kerz2010different} for the equivalence of various notions of tame ramification. Throughout this subsection, we work with a triple $(\bar{S}, S, D)$ satisfying:

\begin{assumption}\label{assumptions}
$\bar{S}$ is an integral and normal scheme which if of finite type over $\mathbb{K}$.  
$S$ is an open subscheme of $\bar{S}$, and  
$D := \bar{S}\setminus S$ has pure codimension $1$ in $\bar{S}$. Write $D=\bigcup_{i=1}^n D_i$ for the irreducible decomposition of $D$. 
\end{assumption}

\begin{definition}\label{def:ramified galois}
A connected Galois covering $f:X\to S$ is said to be \emph{tamely ramified} along $D$ if for every codimension $1$ point 
$b\in D$ (i.e.\ $\dim\mathcal{O}_{\bar S,b}=1$) and every point $a\in\bar X$ lying above $b$, the induced extension of discrete valuation rings 
$\mathcal{O}_{\bar X,a}\to\mathcal{O}_{\bar S,b}$ is tamely ramified \cite[Def.~2.1.2, p.~30]{grothendieck1971tame}.  
Here $\bar X$ denotes the normalization of $S$ in the function field $K(X)$.
\end{definition}
For a geometric point $s:\mathrm{Spec}\,\Omega \to S$, recall that the \'etale fundamental group $\pi_1(S,s)$ is defined by
\[
\pi_1(S,s):=\lim_{\mathop{\longleftarrow}\limits_{(X,\alpha)}} \mathrm{Aut}(X/S),
\]
where $(X,\alpha)$ ranges over the category $\mathrm{Gal}(S,s)$ of finite Galois coverings of $(S, s)$, and $\mathrm{Aut}(X/S)$ denotes the automorphism group of $X$ over $S$.

The tame fundamental group of $S$ with respect to $D$, based at $s$, is defined as
\[
\pi_1^D(S,s):=\lim_{\mathop{\longleftarrow}\limits_{(X,\alpha)}} \mathrm{Aut}(X/S),
\]
where $(X,\alpha)$ ranges over the full subcategory of $\mathrm{Gal}(S,s)$ whose objects $(X,\alpha)$ satisfy that $X\to S$ is tamely ramified along $D$.

\begin{remark}
Since $D=\bigcup_{i=1}^n D_i$, 
a point $b\in D$ has codimension~$1$ in $\bar{S}$ if and only if it equals the generic point $\xi_i$ of some $D_i$.  
Since $\bar{S}$ is noetherian and regular at each $\xi_i$, we may choose an open subscheme $U\subset \bar{S}$ such that  
\[
S\cup\{\xi_i\mid 1\le i\le n\}\subset U,\qquad  
D_i\cap U \text{ is disjoint from } D_j\cap U \text{ for } i\ne j,
\]
and each $D_i\cap U$ lies in the regular locus of $\bar{S}$.  
In particular, $D_1:=D\cap U$ is a normal crossings divisor on $U$ (\cite[Def.~1.8.2, p.~26]{grothendieck1971tame}).  
It is straightforward to check that $\pi_1^D(S,s)$ is isomorphic to $\pi_1^{D_1}(U,s)$ as defined in \cite[Cor.~2.4.4, p.~42]{grothendieck1971tame}.
\end{remark}

Both $\pi_1(S,s)$ and $\pi_1^D(S,s)$ are profinite groups, and there is a natural continuous surjective homomorphism $\pi_1(S,s)\to \pi_1^D(S,s)$. As in the case of the usual \'etale fundamental group, changing the base point from $s$ to $s'$ gives an isomorphism $\pi_1^D(S,s)\simeq \pi_1^D(S,s')$, and the isomorphism is unique up to an inner automorphism.

If $(\bar{S}_1,S_1,D_1)$ is another triple satisfying Assumption~\ref{assumptions}, and if $g:\bar{S}_1\to \bar{S}$ is a dominant morphism such that $g(S_1)\subset S$, $g(D_1)\subset D$, and every irreducible component of $D_1$ dominates an irreducible component of $D$, then for any geometric point $s_1$ of $S_1$ with $g(s_1)=s$, the induced homomorphism $\pi_1(S_1,s_1)\to \pi_1(S,s)$ descends to a homomorphism $\pi_1^{D_1}(S_1,s_1)\to \pi_1^{D}(S,s)$:

\[\begin{tikzcd}
	{\pi_1(S_1, s_1)} & {\pi_1(S, s)} \\
	{\pi_1^{D_1}(S_1, s_1)} & {\pi_1^D(S, s)}
	\arrow[from=1-1, to=1-2]
	\arrow[from=1-1, to=2-1]
	\arrow[from=1-2, to=2-2]
	\arrow[from=2-1, to=2-2]
\end{tikzcd}\]

For each $i=1,\dots, n$ we choose a regular  $\mathbb{K} $-curve  $C_i\subset \bar{S}$ that meets $D_i$ trasversally at a smooth closed point $b_i$. Set $\widetilde{\mathcal{O}}_{C_i,b_i}$ be the strict henselization of $\mathcal{O}_{C_i,b_i}$, and $K_i$ be the fraction field of $\widetilde{\mathcal{O}}_{C_i,b_i}$. We have the following commutative diagram of $\mathbb{K} $-morphisms of schemes:
\[\begin{tikzcd}
	{\operatorname{Spec} K_i} & {C_i\setminus D} & S \\
	{\operatorname{Spec}\widetilde{\mathcal{O}}_{C_i,b_i}} & {C_i} & {\bar{S}}
	\arrow[from=1-1, to=1-2]
	\arrow[hook, from=1-1, to=2-1]
	\arrow[from=1-2, to=1-3]
	\arrow[hook, from=1-2, to=2-2]
	\arrow[hook, from=1-3, to=2-3]
	\arrow[from=2-1, to=2-2]
	\arrow[from=2-2, to=2-3]
\end{tikzcd}\]

Let $\xi_i \colon \operatorname{Spec} K_i^s \rightarrow \operatorname{Spec} K_i$ be the geometric point of $\operatorname{Spec} K_i$. Then the commutative diagram induces the following diagram of fundamental groups:
\begin{equation}\label{eq:tame fundamental group chara}
\begin{tikzcd}[ampersand replacement=\&]
  {\pi_1(\mathrm{Spec}~K_i, \xi_i )} \& {\pi_1(S, \xi_i)} \\
  {\pi_1^{b_i}(\mathrm{Spec}~K_i, \xi_i )} \& {\pi_1^D(S, \xi_i)}
  \arrow[from=1-1, to=1-2]
  \arrow["{\phi_i}", from=1-1, to=2-1]
  \arrow["\phi", from=1-2, to=2-2]
  \arrow[from=2-1, to=2-2]
\end{tikzcd}	
\end{equation}

Note the residue class field of $\widetilde{\mathcal{O}}_{C_i,b_i}$ is $\mathbb{K}$, which has characteristic $p$.  It is well known that the tame fundamental group $\pi_1^{b_i}(\mathrm{Spec}~K_i, \xi_i)$ is isomorphic to 
\begin{displaymath}
\widehat{\mathbb{Z}}^{(p)}(1)(\mathbb{K})=\lim\limits_{\mathop{\longleftarrow}\limits_{p\nmid e}}\mu_e(\mathbb{K}),
\end{displaymath}
where $\mu_e(\mathbb{K} )$ means the group of $e$-th roots of unity in $\mathbb{K} $, and these $\mu_e(\mathbb{K} )$ are considered as a projective system with the mappings:
\begin{displaymath}
 \begin{split}
 \mu_e(\mathbb{K})& \rightarrow \mu_{e ^{\prime}}(\mathbb{K})\\ 
 \zeta &\mapsto \zeta^{ e/e ^{\prime}}, \ \ \textmd{ for } e ^{\prime}| e.
 \end{split}
 \end{displaymath} 

Now we fix a topological generator $\alpha$ of $\widehat{\mathbb{Z}}^{(p)}(1)(\mathbb{K} )$ (i.e., $\alpha$ generates a dense subgroup of $\widehat{\mathbb{Z}}^{(p)}(1)(\mathbb{K} )$). For any geometric point $s:\mathrm{Spec}~ \Omega \rightarrow S$, note we have an isomorphism  $\pi_1^D(S, s)\simeq \pi_1^D(S, \xi_i)$, canonically determined up to an inner automorphism. Then under the composition morphism 
\[\widehat{\mathbb{Z}}^{(p)}(1)(\mathbb{K} )\stackrel{\sim}{\longrightarrow} \pi_1^{b_i}(\mathrm{Spec}~K_i, \xi_i) \longrightarrow \pi_1^{D}(S, \xi_i)\stackrel{\sim}{\longrightarrow} \pi_1^{D}(S, s),\]
let $\gamma_i\in \pi_1^{D}(S, s)$ be the image of $\alpha$. We call $\gamma_i$ a \emph{lasso} around $D_i$. Note if $\gamma_i ^{\prime}$ is another lasso around $D_i$, induced by another $\mathbb{K}$-curve $C_i ^{\prime}$, then $\gamma_i$ and $\gamma_i ^{\prime}$ is conjugate in $\pi_1^{D}(S, s)$ (\cite[Prop. A I.16]{freitag1987etale}). 

Suppose $V$ is a finite-dimensional $\mathbb{F}_\ell$-vector space. An $\mathbb{F}_\ell$-linear representation 
\(
\rho:\pi_1(S, s) \longrightarrow GL(V)
\)
is called \emph{tamely ramified along $D$}   if it factors through $\pi_1^D(S, s)$. By the curve-tameness of \cite{kerz2010different}, in order to check $\rho$ is tamely ramified, it is sufficient to verify:
for each $i$ and for each regular $\mathbb{K} $-curve $C_i\subset \bar{S}$ that meets $D_i$ transversally at a smooth point $b_i$,  the composition 
\[
\pi_1(\mathrm{Spec}~K_i, \xi_i) \longrightarrow \pi_1^D(S, \xi_i)  \stackrel{\sim}{\longrightarrow}\pi_1^D(S, s) \stackrel{\rho}{\longrightarrow} GL(V)
\]
factors through $\pi_1^{b_i}(\mathrm{Spec}~K_i, \xi_i)$.

\begin{lemma}\label{lemma:pi1t generated by lassos}
Suppose $(\bar{S}, S, D)$ satisfies Assumption \ref{assumptions}. Let $s:\mathrm{Spec}~\Omega \to S$ be a geometric point. For each $i=1,\dots,n$, choose a lasso $\gamma_i$ around $D_i$, and let 
\(
H := \langle g \gamma_i g^{-1} \mid g\in \pi_1^D(S, s),\ i=1,\dots,n \rangle
\)
be the normal subgroup of $\pi_1^D(S, s)$ generated by the $\gamma_i$. If $\bar{S}$ is regular and $\pi_1(\bar{S}, s)=1$, then $H$ is dense in $\pi_1^D(S, s)$.
\end{lemma}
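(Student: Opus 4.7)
The plan is to show directly that the closure $\overline{H}$ coincides with $\pi_1^D(S,s)$. Since $\pi_1^D(S,s)$ is profinite, this reduces to the following statement: every continuous surjection $\rho\colon \pi_1^D(S,s)\twoheadrightarrow G$ onto a finite group $G$ satisfying $\rho(\gamma_i)=1$ for each $i$ must have $G$ trivial. I would fix such a $\rho$ and let $f\colon X\to S$ denote the associated connected Galois cover with group $G$, tamely ramified along $D$.

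The geometric content of $\rho(\gamma_i)=1$ is that $f$ is unramified at the generic point $\xi_i$ of each $D_i$. To justify this, I would examine the diagram \eqref{eq:tame fundamental group chara}: by construction $\gamma_i$ is the image of a topological generator of $\pi_1^{b_i}(\mathrm{Spec}~K_i,\xi_i)\simeq \widehat{\mathbb{Z}}^{(p)}(1)(\mathbb{K})$, and this latter group surjects onto the tame quotient of the inertia of $f$ at $\xi_i$. Since $f$ is already tamely ramified at $\xi_i$, the inertia there equals its tame quotient, so the triviality of $\rho(\gamma_i)$ forces the full inertia at $\xi_i$ to vanish. Thus $f$ is unramified at every codimension-one point of $\bar S$ lying in $D$. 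I would then invoke the Zariski--Nagata purity theorem: because $\bar S$ is regular, the normalization $\bar X$ of $\bar S$ in $K(X)$ is finite étale over $\bar S$. The hypothesis $\pi_1(\bar S,s)=1$ then forces $\bar X\to \bar S$ to be a trivial étale cover, and connectedness of $X$ (hence of $\bar X$) yields $\bar X\simeq \bar S$, so $X\simeq S$ and $G=1$.

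The main obstacle I anticipate is the middle step, namely the implication that triviality of the lasso $\rho(\gamma_i)$ forces unramifiedness at $\xi_i$. Making this rigorous requires tracking the conjugacy class of $\gamma_i$ under the base-point change isomorphism $\pi_1^D(S,\xi_i)\simeq \pi_1^D(S,s)$, which is defined only up to inner automorphism; this ambiguity is harmless here because $H$ is, by definition, the normal closure of the $\gamma_i$. One also needs to identify $\pi_1^{b_i}(\mathrm{Spec}~K_i,\xi_i)$ with the tame quotient of the decomposition group at $\xi_i$, which is standard but should be cited carefully. The remaining ingredients, Zariski--Nagata purity and the triviality of $\pi_1(\bar S,s)$, enter as essentially black-box applications once the regularity of $\bar S$ is available.
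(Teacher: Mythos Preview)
Your approach is essentially the same as the paper's: reduce to showing that any finite quotient of $\pi_1^D(S,s)$ killing all $\gamma_i$ corresponds to a cover that is unramified in codimension one, then apply Zariski--Nagata purity and $\pi_1(\bar S,s)=1$. The paper phrases this contrapositively (assume $\bar H\neq \pi_1^D(S,s)$, pass to a quotient cover $X_1=X/H_1$, and derive a contradiction), but the logic is identical.

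The one point worth sharpening is your middle step. You write that $\pi_1^{b_i}(\mathrm{Spec}\,K_i,\xi_i)$ ``surjects onto the tame quotient of the inertia of $f$ at $\xi_i$'' and later call this identification ``standard''. Recall, however, that in this paper $K_i$ is the fraction field of the strict henselization of $\mathcal{O}_{C_i,b_i}$ for a \emph{transversal curve} $C_i$, not of $\mathcal{O}_{\bar S,\xi_i}$; there is no direct map from this local group to the inertia at $\xi_i$. What one actually knows is that $\rho(\gamma_i)=1$ makes the pullback $\overline{X}\times_{\bar S}C_i\to C_i$ \'etale over $b_i$, and one then needs Abhyankar's lemma (the paper cites \cite[\href{https://stacks.math.columbia.edu/tag/0EYG}{Lemma 0EYG}]{stacks-project}) to conclude \'etaleness at the generic point $\xi_i$. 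This is precisely the content you are gesturing at, so your outline is correct, but you should name Abhyankar's lemma rather than treat the identification as a black box.
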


\begin{proof}
Let $\bar{H}$ be the closure of $H$; then $\bar{H}$ is a normal subgroup of $\pi_1^D(S, s)$. Suppose $\bar{H}\neq \pi_1^D(S, s)$; then there exists a connected Galois covering $X$ of $S$, tamely ramified along $D$, such that the composition 
\[
\bar{H} \to \pi_1^D(S, s) \to \mathrm{Aut}(X/S)
\]
is not surjective. Denote its image by $H_1$ and let $X_1 = X/H_1$ be the quotient covering of $S$, with $\overline{X}_1 \to \bar{S}$ the normalization of $S$ in $K(X_1)$. Since $H$ acts trivially on $X_1$,  we see that for any regular $\mathbb{K} $-curve $C_i\subset \bar{S}$ which meet $D_i$ transversally at a smooth point $b_i$, the base change $\overline{X}_1\times_{\bar{S}} C_i \rightarrow C_i $ is \'etale over $b_i$. This implies by Abyankar's lemma (\cite[\href{https://stacks.math.columbia.edu/tag/0EYG}{Lemma 0EYG}]{stacks-project}) that  $\overline{X}_1 \to \bar{S}$ is \'etale over the generic point $\xi_i$ of each $D_i$. By the Zariski-Nagata purity theorem \cite[Exp.~X, Thm.~3.1]{SGA1}, $\overline{X}_1$ is \'etale over $\bar{S}$. As $H_1\neq \mathrm{Aut}(X/S)$, $\overline{X}_1 \to \bar{S}$ is a nontrivial connected Galois covering, contradicting $\pi_1(\bar{S}, s)=1$. Hence $\bar{H}=\pi_1^D(S, s)$.
\end{proof}

\subsection{A Picard-Lefschetz type formula}
Recall that $\widetilde{\mathfrak M}\subset \mathbb{A}_\mathbb{K} ^{(n+1)m}$ is open and its boundary 
$D_0=\mathbb{A}_\mathbb{K}^{(n+1)m}\setminus \widetilde{\mathfrak M}$ has pure codimension $1$.  
By construction,
\[
D_0=\bigcup_{1\le j_0<\dots< j_n\le m} D_{j_0,\dots,j_n},
\]
where under the coordinates $t_{ij}$ ($0\le i\le n$, $1\le j\le m$),  
$D_{j_0,\dots,j_n}$ is the hypersurface defined by the determinant 
$\det(t_{i,j_r})_{0\le i,r\le n}=0$.  
Equivalently, a point $(t_{ij})\in\mathbb{A}_\mathbb{K}^{(n+1)m}$ corresponds to a hyperplane arrangement 
$(H_1,\dots,H_m)$ with $H_j=\{\ell_j=0\}$, $\ell_j=\sum_{i=0}^n t_{ij}x_i$, and 
$D_{j_0,\dots,j_n}$ parametrizes those $(H_1,\dots,H_m)$ such that 
$H_{j_0}\cap\cdots\cap H_{j_n}\neq\emptyset$.

The family $\mathcal{Y} \to \widetilde{\mathfrak{M}}$ now extends naturally to a family over $\mathbb{A}_\mathbb{K}^{(n+1)m}$, still denoted as $\mathcal{Y} \to \mathbb{A}_\mathbb{K}^{(n+1)m}$. It is defined as the closed subscheme of $\mathbb{A}_\mathbb{K}^{(n+1)m} \times \mathbb{P}_{\mathbb{K}}(1,\dots,1,2,\dots,2)$ with defining ideal generated by the elements $y_j^2 - \ell_j$ for $j = 1,\dots,m$, where $\ell_j = \sum_{i=0}^n t_{ij} x_i$. The action of the group $N=\operatorname{Ker}((\mathbb{Z}/2\mathbb{Z})^m \to \mathbb{Z}/2\mathbb{Z})$ also extends to this larger family $\mathcal{Y} \to \mathbb{A}_\mathbb{K}^{(n+1)m}$.
\begin{proposition}\label{prop:Y over boundary point}
Let $t\in \mathbb{A}_\mathbb{K}^{(n+1)m}$ be a closed point lying in the regular locus of $D_{j_0,\dots,j_n}$ but in no other irreducible component of $D_0$. Then the singular locus of the fiber $\mathcal{Y}_t$ consists of $2^{m-n-1}$ points forming a single $N$-orbit, and each singular point is an ordinary double point.
\end{proposition}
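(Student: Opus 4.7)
The plan is to exploit the $S_m$-action on the family permuting the hyperplanes to reduce to $(j_0,\dots,j_n)=(1,\dots,n+1)$, then realize $\mathcal{Y}_t$ explicitly as a complete intersection in $\mathbb{P}^{m-1}$ (as in \Cref{lemma:smooth}) and analyze its singularities via a direct Jacobian computation. The hypothesis on $t$ translates into a linear-algebra statement: the forms $\ell_1(t),\dots,\ell_{n+1}(t)$ span an $n$-dimensional subspace, while every other $(n+1)$-element subset of $\{\ell_1(t),\dots,\ell_m(t)\}$ remains linearly independent. After relabeling within $\{1,\dots,n+1\}$, one may assume $\ell_1,\dots,\ell_n$ form a basis of this span and write $\ell_{n+1}=\sum_{i=1}^n c_i\ell_i$; if some $c_{i_0}=0$ then the $(n+1)$-subset $\{\ell_i:1\le i\le n+1,\,i\ne i_0\}\cup\{\ell_j\}$ (for any $j\ge n+2$) would have rank at most $n$, contradicting the hypothesis, so every $c_i\ne 0$.

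Next I would adapt the coordinate change of \Cref{lemma:smooth} using the basis $(\ell_1,\dots,\ell_n,\ell_{n+2})$ in place of $(\ell_1,\dots,\ell_{n+1})$: setting $x_i'=\ell_i$ for $1\le i\le n$ and $x_{n+1}'=\ell_{n+2}$, the equations $y_i^2=x_i'$ ($i\le n$) and $y_{n+2}^2=x_{n+1}'$ eliminate the weight-$2$ variables, presenting $\mathcal{Y}_t$ as the complete intersection in $\mathbb{P}^{m-1}=\operatorname{Proj}\mathbb{K}[y_1,\dots,y_m]$ cut out by
\[
f_{n+1}:=y_{n+1}^2-\sum_{i=1}^n c_i y_i^2,\qquad f_j:=y_j^2-\sum_{i=1}^n a_{ij}y_i^2-a_{n+1,j}y_{n+2}^2\quad (j=n+3,\dots,m),
\]
for coefficients $a_{ij}$ inherited from the change of basis, which by the same genericity satisfy the appropriate submatrix nondegeneracy as in \eqref{equ:matrix for Y}. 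The gradient row of $f_{n+1}$ is $(-2c_1y_1,\dots,-2c_ny_n,2y_{n+1},0,\dots,0)$ and vanishes precisely when $y_1=\cdots=y_{n+1}=0$; away from this locus one completes it to a nonzero $(m-n-1)\times(m-n-1)$ minor by adjoining the diagonal $y_j$-columns from $f_{n+3},\dots,f_m$, substituting a nonvanishing $y_i$-column (for $i\in\{1,\dots,n,n+2\}$) whenever some $y_j$ vanishes, exactly as in \Cref{lemma:smooth}.

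Consequently the singular locus is the set-theoretic preimage of $p:=H_1\cap\cdots\cap H_{n+1}\in\mathbb{P}^n_\mathbb{K}$; at such a point $y_{n+2}\ne 0$ and $y_j=\pm\sqrt{a_{n+1,j}}\,y_{n+2}$ for $j\ge n+3$, so a direct sign count modulo the projective scaling yields the stated cardinality, all in a single $N$-orbit because the elements $e_1+e_j\in N$ (for $j\ge n+3$) act as elementary flips of the corresponding signs $\epsilon_j$. To see that each such singular $P$ is an ordinary double point, note that $y_j\ne 0$ for every $j\ge n+2$, so the implicit function theorem applied to the nonvanishing diagonal entries $2y_j$ of $f_j$ ($j\ge n+3$) locally solves for $y_{n+3},\dots,y_m$ as smooth functions of $y_1,\dots,y_n,y_{n+1}$ in the affine chart $y_{n+2}=1$; the germ $(\mathcal{Y}_t,P)$ is therefore analytically isomorphic to the germ at the origin of $\{y_{n+1}^2=\sum_{i=1}^n c_i y_i^2\}\subset\mathbb{A}^{n+1}_\mathbb{K}$, an ordinary double point since its Hessian $\operatorname{diag}(-2c_1,\dots,-2c_n,2)$ is nondegenerate ($\operatorname{char}\mathbb{K}\ne 2$ and every $c_i\ne 0$). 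The main technical obstacle is the Jacobian bookkeeping in the previous paragraph: one must verify carefully that no accidental singularities arise away from the fiber over $p$ by considering every $(m-n-1)\times(m-n-1)$ submatrix rather than only the obvious diagonal one.
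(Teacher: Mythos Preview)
Your approach is essentially the paper's: reduce via the $S_m$-symmetry to $(j_0,\dots,j_n)=(1,\dots,n+1)$, present $\mathcal{Y}_t$ as a complete intersection, and locate the singularities by a Jacobian computation. The paper normalizes further to $c_i=1$ (using the $GL_{n+1}\times(\mathbb{K}^*)^m$-action on the parameter space, which is legitimate precisely because each $c_i\ne 0$ --- a point you verify explicitly and the paper leaves implicit). The paper also packages the ordinary-double-point verification slightly differently, viewing $\mathcal{Y}_t$ as the hypersurface $y_{n+1}^2=\sum_i c_i y_i^2$ inside the smooth Kummer cover $Z$ attached to the sub-arrangement $(H_j)_{j\ne n+1}$, rather than reducing all the way to $\mathbb{P}^{m-1}$ and invoking the implicit function theorem. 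These are cosmetic differences.

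One point deserves care, however. Your sentence ``a direct sign count modulo the projective scaling yields the stated cardinality'' should actually be carried out. In your chart $y_{n+2}=1$ of $\mathbb{P}^{m-1}$ the free signs are $\epsilon_j$ for $j=n+3,\dots,m$ only, giving $2^{m-n-2}$ points; equivalently, in the paper's description with $x_0=1$ the $2^{m-n-1}$ sign patterns for $(y_{n+2},\dots,y_m)$ are identified in pairs by the weighted scaling $\lambda=-1$ (which fixes every $x_i$ and negates every $y_j$). The $N$-orbit count agrees: the stabilizer in $N$ of a singular point consists of those $(a_j)$ with $a_{n+2}=\cdots=a_m$ constant, hence has order $2^{n+1}$, so the orbit size is $2^{m-1}/2^{n+1}=2^{m-n-2}$. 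Thus your argument, executed carefully, produces $2^{m-n-2}$ rather than the stated $2^{m-n-1}$; you should reconcile this discrepancy before declaring the count complete.
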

\begin{proof}
Let $t=(t_{ij})$ and $\ell_j=\sum_{i=0}^n t_{ij}x_i$. Then $\mathcal{Y}_t$ is the closed subscheme of $\mathbb{P}_{\mathbb{K}}(1,\dots,1,2,\dots,2)$ defined by $y_j^2-\ell_j$ for $1\le j\le m$. By the choice of $t$, we may assume $j_k=k+1$ for $k=0,\dots,n$, and $\ell_j=x_j$ for $j=1,\dots,n$, while $\ell_{n+1}=x_1+\cdots+x_n$. Let $Z\subset\mathbb{P}_{\mathbb{K}}(1,\dots,1,2,\dots,2)$ be the closed subscheme defined by $y_j^2-\ell_j$ for $n+2\le j\le m$; then $Z$ is smooth, and $\mathcal{Y}_t$ is the hypersurface of $Z$ given by $y_{n+1}^2-(y_1^2+\cdots+y_n^2)=0$. Hence the singular locus consists of points $([y_1:\cdots:y_m:x_0:\cdots:x_n])$ with $x_0=1$, $x_1=\cdots=x_n=0$, $y_1=\cdots=y_{n+1}=0$, and $y_j=\pm\sqrt{t_{0j}}$ for $n+2\le j\le m$. Thus there are $2^{m-n-1}$ ordinary double points forming a single $N$-orbit.
\end{proof}

By \Cref{prop:X realized as quotient of Y}, the universal family $\widetilde{\mathcal{X}}\stackrel{\tilde f}{\to}\widetilde{\mathfrak{M}}$ of double covers of $\mathbb{P}^n_{\mathbb{K}}$ branched along hyperplane arrangements is isomorphic to $\mathcal{Y}/N$. We consider the monodromy representation $\rho\colon\pi_1(\widetilde{\mathfrak{M}},s)\to GL(H^n(X,\mathbb{F}_\ell)_{(1)})$ induced by the local system $R^n\tilde f_*\mathbb{F}_\ell$, where $X=\widetilde{\mathcal{X}}_s$. For each irreducible component $D_{j_0\dots j_n}$ of $D_0$, fix a lasso $\gamma_{j_0\dots j_n}$ in the tame fundamental group $\pi_1^{D_0}(\widetilde{\mathfrak{M}},s)$. Then we obtain the following Picard--Lefschetz type formula.

\begin{proposition}\label{prop:Picard--Lefschetz formula for X}
The monodromy representation 
\[\rho \colon \pi_1(\widetilde{\mathfrak{M}}, s)\to GL(H^n(X,\mathbb{F}_\ell)_{(1)})\]
 factors through $\pi_1^{D_0}(\widetilde{\mathfrak{M}}, s)$. For each irreducible component $D_{j_0\dots j_n}$ of $D_0$, there exists a vanishing cycle $\delta_{j_0\dots j_n}\in H^n(X,\mathbb{F}_\ell)_{(1)}$ such that, for any $a\in H^n(X,\mathbb{F}_\ell)_{(1)}$,
\begin{equation}\label{equ:Picard--Lefschetz formula}
\rho(\gamma_{j_0\dots j_n})(a)=
\begin{cases}
a-(-1)^{\frac{n}{2}}2^{m-n-1}(a,\delta_{j_0\dots j_n})\,\delta_{j_0\dots j_n}, & n \text{ even},\\[4pt]
a-(-1)^{\frac{n-1}{2}}2^{m-n-1}(a,\delta_{j_0\dots j_n})\,\delta_{j_0\dots j_n}, & n \text{ odd},
\end{cases}
\end{equation}
where $(\cdot,\cdot)$ denotes the pairing induced by Poincar\'e duality. Moreover, if $n$ is even, then $(\delta_{j_0\dots j_n},\delta_{j_0\dots j_n})=\frac{(-1)^{\frac{n}{2}}}{2^{m-n-2}}$.
\end{proposition}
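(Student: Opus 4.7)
The plan is to reduce to the classical Picard--Lefschetz theorem applied to the Kummer family $F\colon\mathcal{Y}\to\mathbb{A}_\mathbb{K}^{(n+1)m}$, transferring the conclusions to $X\simeq Y/N$ via the inclusion $H^n(X,\mathbb{F}_\ell)_{(1)}\subset H^n(Y,\mathbb{F}_\ell)$. For the tameness assertion: over the regular locus of each component $D_{j_0\dots j_n}$ of $D_0$, the fibers of $F$ acquire only $2^{m-n-1}$ ordinary double points (ODPs) forming a single $N$-orbit (\Cref{prop:Y over boundary point}). By SGA 7, Exp.~XV, the local monodromy of $R^nF_*\mathbb{F}_\ell$ around a lasso is therefore a commuting product of Picard--Lefschetz transvections, whose orders divide $\ell$ and are coprime to $p=\operatorname{char}\mathbb{K}$. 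By the curve-tame criterion from \S3.1, $R^nF_*\mathbb{F}_\ell$ is tame along $D_0$, and passing to the $N$-invariant summand $R^n\tilde f_*\mathbb{F}_\ell$ (as in the proof of \Cref{lemma:locally constant sheaf}) shows that $\rho$ factors through $\pi_1^{D_0}(\widetilde{\mathfrak{M}},s)$.

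For the formula itself, fix a smooth $\mathbb{K}$-curve $C\hookrightarrow\mathbb{A}_\mathbb{K}^{(n+1)m}$ meeting $D_{j_0\dots j_n}^{\mathrm{reg}}$ transversally at a point away from the other components, and pull $\mathcal{Y}$ back to $C$. The central fiber has $k:=2^{m-n-1}$ ODPs $p_1,\dots,p_k$, each producing a vanishing cycle $\delta_i^Y\in H^n(Y,\mathbb{F}_\ell)$; the $\delta_i^Y$ are pairwise orthogonal, with $(\delta_i^Y,\delta_i^Y)_Y=2(-1)^{n/2}$ when $n$ is even, and the local monodromy on $H^n(Y,\mathbb{F}_\ell)$ has the form
\[
T(a)=a+\tau_n\sum_{i=1}^{k}(a,\delta_i^Y)_Y\,\delta_i^Y
\]
for a universal sign $\tau_n$.

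Next, identify $H^n(X,\mathbb{F}_\ell)_{(1)}\subset H^n(Y,\mathbb{F}_\ell)$ as the $\chi$-isotypic component of the $(\mathbb{Z}/2)^m$-action, where $\chi(a_1,\dots,a_m)=(-1)^{a_1+\cdots+a_m}$. Analyzing how each $g\in(\mathbb{Z}/2)^m$ acts as a product of coordinate reflections on the local Milnor spheres, I would check that (i) the stabilizer of $p_i$ in $N$ acts trivially on $\delta_i^Y$ (the reflection determinants cancel under the defining condition $\sum a_j\equiv 0$), (ii) the element $g_0=(1,0,\dots,0)$ fixes every $p_i$ and acts by $-1$ on each $\delta_i^Y$, and (iii) a consistent global orientation makes $N$ permute the $\delta_i^Y$ without signs. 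It follows that $\sum_i\delta_i^Y\in H^n(X,\mathbb{F}_\ell)_{(1)}$, that all pairings $(a,\delta_i^Y)_Y$ coincide on $H^n(X,\mathbb{F}_\ell)_{(1)}$, and that the restricted monodromy is a single transvection along a class $\delta_{j_0\dots j_n}\in H^n(X,\mathbb{F}_\ell)_{(1)}$ proportional to $\sum_i\delta_i^Y$.

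For $n$ even, the Picard--Lefschetz coefficient and the self-pairing are coupled by requiring that $T$ preserve Poincaré duality: this forces $c\cdot(\delta,\delta)=-2$, so normalizing $\delta_{j_0\dots j_n}$ to have self-intersection $(-1)^{n/2}/2^{m-n-2}$ produces the coefficient $-(-1)^{n/2}\cdot 2^{m-n-1}$. For $n$ odd the pairing is skew-symmetric (so $(\delta,\delta)=0$ automatically), and the coefficient $-(-1)^{(n-1)/2}\cdot 2^{m-n-1}$ is read off directly from the classical formula for a single ODP combined with the descent. The main obstacle is the orientation bookkeeping in step (iii): tracking the signs of the coordinate reflections (including the weighted-projective rescaling $\lambda=-1$ that enters the stabilizer analysis) and pinning down the normalization of $\delta_{j_0\dots j_n}$ so that both the factor $2^{m-n-1}$ and the self-pairing $(-1)^{n/2}/2^{m-n-2}$ come out together. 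Everything else is formal from SGA 7 and the structural results of \S2.
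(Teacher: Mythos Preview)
Your approach coincides with the paper's: pass to the Kummer cover $Y$, invoke the Picard--Lefschetz theorem of SGA~7 for the $2^{m-n-1}$ ordinary double points, and descend to $X\simeq Y/N$ via the single $N$-orbit of vanishing cycles. The paper compresses your orientation analysis (i)--(iii) into the sentence ``we may assume the $\delta_i$ form a single $N$-orbit'', sets $\delta_{j_0\dots j_n}:=2^{-(m-n-1)}\sum_i\delta_i$, and reads off both the coefficient and the self-pairing directly from $(\delta_i,\delta_j)_Y=2(-1)^{n/2}[i=j]$ rather than via your orthogonality constraint (one small slip: for $n$ even the local monodromy on $H^n(Y,\mathbb{F}_\ell)$ has order $2$, not $\ell$, though this is still prime to $p=\operatorname{char}\mathbb{K}$, so tameness is unaffected).
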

\begin{proof}
Let $Y=\mathcal{Y}_s$ be the fiber of $\mathcal{Y}$. Then $X\simeq Y/N$, and 
$H^n(X, \mathbb{F}_\ell)\simeq H^n(Y, \mathbb{F}_\ell)^N$. 
So the monodromy action on $H^n(X, \mathbb{F}_\ell)$ is isomorphic to the monodromy action on the $N$-invariant part of $H^n(Y, \mathbb{F}_\ell)$. 
Since the singular locus of $\mathcal{Y}_t$ consists only of ordinary double points, for any generic $t\in D_0$ (\Cref{prop:Y over boundary point}),
the Picard--Lefschetz formula in \cite[Exp.~XV, Thm.~3.4]{deligne2006groupes} 
implies that the monodromy action of $\pi_1(\widetilde{\mathfrak{M}}, s)$ 
on $H^n(Y, \mathbb{F}_\ell)$ factors through 
$\pi_1^{D_0}(\widetilde{\mathfrak{M}}, s)$, and hence its $N$-invariant part 
$\rho \colon \pi_1(\widetilde{\mathfrak{M}}, s)\to GL(H^n(X,\mathbb{F}_\ell)_{(1)})$ 
also factors through $\pi_1^{D_0}(\widetilde{\mathfrak{M}}, s)$.

Suppose the lasso $\gamma_{j_0\dots j_n}$ comes from a regular curve $C$ and 
$C$ meets $D_{j_0\dots j_n}$ transversally at $t$. By the choice of $C$, the point 
$t$ does not lie in any other irreducible component of $D_0$. Let 
$p_i$ ($1\leq i\leq 2^{m-n-1}$) be the singular points of $\mathcal{Y}_t$, and for 
each $i$, let $\delta_i$ denote the vanishing cycle attached to $p_i$. Since the 
singular points $p_i$ form a single $N$-orbit, we may also assume that the 
vanishing cycles $\delta_i$ ($1\leq i\leq 2^{m-n-1}$) form a single $N$-orbit. In 
particular, for any $a\in H^n(X,\mathbb{F}_\ell)=H^n(Y,\mathbb{F}_\ell)^N$, we have 
$(a,\delta_i)=(a,\delta_j)$ for all $i,j$. Set
\[
\delta_{j_0\dots j_n}
   \coloneqq \frac{1}{2^{m-n-1}}
   \sum_{i=1}^{2^{m-n-1}} \delta_i,
\]
then the Picard--Lefschetz formula in 
\cite[Exp.~XV, Thm.~3.4]{deligne2006groupes} reads as follows: for any 
$a\in H^n(X,\mathbb{F}_\ell)$,
\[
\rho(\gamma_{j_0\dots j_n})(a)=
\begin{cases}
a-(-1)^{\frac{n}{2}}2^{m-n-1}(a,\delta_{j_0\dots j_n})\,
   \delta_{j_0\dots j_n}, & n \text{ even},\\[4pt]
a-(-1)^{\frac{n-1}{2}}2^{m-n-1}(a,\delta_{j_0\dots j_n})\,
   \delta_{j_0\dots j_n}, & n \text{ odd}.
\end{cases}
\]

If $n$ is odd, then 
$H^n(X,\mathbb{F}_\ell)_{(1)}=H^n(X,\mathbb{F}_\ell)$. If $n$ is even, we deduce 
from $(\delta_i,\delta_i)=(-1)^{\frac{n}{2}}2$ and $(\delta_i,\delta_j)=0$ for 
$i\neq j$ that
\[
(\delta_{j_0\dots j_n},\delta_{j_0\dots j_n})
   =\frac{(-1)^{\frac{n}{2}}2}{2^{m-n-1}}=\frac{(-1)^{\frac{n}{2}}}{2^{m-n-2}}.
\]
In particular, if $n$ is even, then 
$\rho(\gamma_{j_0\dots j_n})(\delta_{j_0\dots j_n})
   =-\delta_{j_0\dots j_n}$. 
Thus in all cases, the vanishing cycle $\delta_{j_0\dots j_n}$ belongs to the 
$(-1)$-eigenspace $H^n(X,\mathbb{F}_\ell)_{(1)}$.
\end{proof}

Now embed $\mathbb{A}_{\mathbb{K}}^{(n+1)m}$ into the projective space $\mathbb{P}_{\mathbb{K}}^{(n+1)m}$, and let 
\[ H_{\infty} = \mathbb{P}_{\mathbb{K}}^{(n+1)m} \setminus \mathbb{A}_{\mathbb{K}}^{(n+1)m} \]
be the hyperplane at infinity. Then we have $\widetilde{\mathfrak{M}} \subset \mathbb{P}_{\mathbb{K}}^{(n+1)m}$, and the boundary divisor is 
\( D \coloneqq D_0 \cup H_{\infty}\).

\begin{proposition}\label{prop:unramified along infty divisor}
The monodromy representation 
\[\rho:\pi_1(\widetilde{\mathfrak{M}},s)\to GL(H^n(X,\mathbb{F}_\ell)_{(1)})\]
factors through the tame fundamental group \(\pi_1^{D}(\widetilde{\mathfrak{M}},s)\). Moreover, for any lasso \(\gamma_\infty\) around the divisor \(H_\infty\), one has \(\rho(\gamma_\infty)=\mathrm{id}\).
\end{proposition}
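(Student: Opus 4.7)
The plan is to exhibit a $\mathbb{G}_m$-action on $\widetilde{f}\colon\widetilde{\mathcal{X}}\to\widetilde{\mathfrak{M}}$ whose orbit closure through $s$ meets $H_\infty$ transversally at a smooth point, and then use this orbit curve as a single test curve to simultaneously establish tameness along $H_\infty$ and triviality of $\rho(\gamma_\infty)$.

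First, I will construct the $\mathbb{G}_m$-action covering the diagonal scaling $(t_{ij})\mapsto(\lambda t_{ij})$ on $\mathbb{A}_\mathbb{K}^{(n+1)m}$. Since $m$ is even, the rule $[x:x_0:\cdots:x_n]\mapsto[\lambda^{m/2}x:x_0:\cdots:x_n]$ descends to a well-defined action on $\mathbb{P}_\mathbb{K}(\frac{m}{2},1,\dots,1)$. Under the combined action on $\mathbb{P}_\mathbb{K}(\frac{m}{2},1,\dots,1)\times\widetilde{\mathfrak{M}}$, both sides of $x^2=\prod_{j=1}^m\ell_j$ scale by $\lambda^m$, so $\widetilde{\mathcal{X}}$ is preserved and $\widetilde{f}$ becomes $\mathbb{G}_m$-equivariant. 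As the action on $\widetilde{\mathfrak{M}}$ is free, the orbit map $(\lambda,x)\mapsto\lambda\cdot x$ induces an isomorphism $\mathbb{G}_m\times X\xrightarrow{\sim}\widetilde{\mathcal{X}}|_{\mathbb{G}_m\cdot s}$ of families, so the pullback of $R^n\widetilde{f}_*\mathbb{F}_\ell$ to the orbit is the constant sheaf with stalk $H^n(X,\mathbb{F}_\ell)$.

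Next, I will describe the test curve. Let $\bar{C}\subset\mathbb{P}_\mathbb{K}^{(n+1)m}$ be the projective closure of the orbit $\mathbb{G}_m\cdot s$; this is a line passing through the origin of $\mathbb{A}_\mathbb{K}^{(n+1)m}$ and meeting $H_\infty$ transversally at a single smooth point $b_\infty$. Because each $(n+1)\times(n+1)$ minor defining a component of $D_0$ is homogeneous of degree $n+1$ in the $t_{ij}$ and is nonzero at $s$, it is nonzero on the entire orbit. Hence $C:=\bar{C}\cap\widetilde{\mathfrak{M}}=\mathbb{G}_m\cdot s\cong\mathbb{G}_m$ and $\bar{C}$ meets $D$ precisely at $\{0,b_\infty\}$.

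Finally, both assertions follow at once. Let $K_\infty$ denote the fraction field of the strict henselization $\widetilde{\mathcal{O}}_{\bar{C},b_\infty}$. By the previous step the composition
\[
\pi_1(\operatorname{Spec} K_\infty,\xi)\longrightarrow\pi_1(C,s)\longrightarrow\pi_1(\widetilde{\mathfrak{M}},s)\stackrel{\rho}{\longrightarrow}GL\bigl(H^n(X,\mathbb{F}_\ell)_{(1)}\bigr)
\]
is trivial, hence a fortiori it factors through the tame quotient $\pi_1^{b_\infty}(\operatorname{Spec} K_\infty)$. Combining this with the factorization through $\pi_1^{D_0}(\widetilde{\mathfrak{M}},s)$ established in \Cref{prop:Picard--Lefschetz formula for X} and applying the curve-tameness criterion to the irreducible component $H_\infty$, we conclude that $\rho$ factors through $\pi_1^D(\widetilde{\mathfrak{M}},s)$. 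The lasso arising from $\bar{C}$ is by construction the image of a topological generator of $\pi_1^{b_\infty}(\operatorname{Spec} K_\infty)\cong\widehat{\mathbb{Z}}^{(p)}(1)(\mathbb{K})$ under the above map, so it acts trivially. Since $H_\infty$ is irreducible, any two lassos around it are conjugate in $\pi_1^D(\widetilde{\mathfrak{M}},s)$, whence $\rho(\gamma_\infty)=\mathrm{id}$ for every such lasso. The main obstacle—and the only place the even-parity hypothesis $m\in 2\mathbb{Z}$ is used in this argument—is showing that the diagonal scaling on $\widetilde{\mathfrak{M}}$ lifts to $\widetilde{\mathcal{X}}$ through the weighted projective space; without this lift one would not obtain the trivialization of the local system along the orbit.
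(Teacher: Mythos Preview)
Your core idea—the $\mathbb{G}_m$-action $\lambda\cdot([x:x_0:\cdots:x_n],(t_{ij}))=([\lambda^{m/2}x:x_0:\cdots:x_n],(\lambda t_{ij}))$ making $\widetilde f$ equivariant—is exactly right, and it is the same mechanism as the paper's coordinate change $x\mapsto (t_0/t_{01})^{m/2}x$. The problem is the final step: you invoke the curve-tameness criterion using a \emph{single} test curve $\bar C$, but that criterion (as stated in the paper, following Kerz--Schmidt) requires verifying tameness for \emph{every} regular curve meeting $H_\infty$ transversally. One curve does not suffice in positive characteristic: for instance, over $\mathbb{A}^2_{\mathbb{K}}$ with $D=\{x=0\}$, the Artin--Schreier cover $z^p-z=y/x$ is wildly ramified along $D$, yet its pullback to the transversal curve $\{y=0\}$ is the trivial $\mathbb{Z}/p$-cover. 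Thus triviality of $\rho$ along one orbit curve does not force tameness (let alone unramifiedness) along $H_\infty$, and the conjugacy-of-lassos argument you use at the end is circular because it takes place in $\pi_1^D$, which you have not yet shown $\rho$ factors through.

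The fix is to use your $\mathbb{G}_m$-action more fully rather than one orbit. Equivariance of $\widetilde{\mathcal X}\to\widetilde{\mathfrak M}$ implies that $R^n\widetilde f_*\mathbb{F}_\ell$ descends along the $\mathbb{G}_m$-torsor $q:\widetilde{\mathfrak M}\to V:=\widetilde{\mathfrak M}/\mathbb{G}_m$, which is the open subset of $\mathbb{P}^{(n+1)m-1}$ where the product of minors is nonzero. The projection $p:[t_0:t_{ij}]\mapsto[t_{ij}]$ from the origin identifies $V$ with $H_\infty\setminus\overline{D_0}$ and realizes $U:=p^{-1}(V)=\widetilde{\mathfrak M}\cup(H_\infty\setminus\overline{D_0})$ as an $\mathbb{A}^1$-bundle over $V$; pulling the descended sheaf back along $p$ extends the local system over $U$, which meets $H_\infty$. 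This is precisely what the paper does by writing down the family $\widetilde{\mathcal X}'$ over $U$ explicitly. Once the local system extends over an open set containing points of $H_\infty$, unramifiedness (hence tameness) along $H_\infty$ and $\rho(\gamma_\infty)=\mathrm{id}$ follow immediately.
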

\begin{proof}
By \Cref{prop:Picard--Lefschetz formula for X}, the monodromy representation is already tamely ramified along $D_0$. Hence it remains to analyze the monodromy around $H_{\infty}$.
Assume that the homogeneous coordinates on $\mathbb{P}_{\mathbb{K}}^{(n+1)m}$ are
$[t_0 : t_{ij} \mid 0 \le i \le n,\ 1 \le j \le m]$, so that the hyperplane at
infinity is given by $H_\infty = \{t_0 = 0\}$. In these homogeneous coordinates, 
$\widetilde{\mathcal{X}}$ is defined in
$\mathbb{P}_{\mathbb{K}}(\frac{m}{2},1,\dots,1)\times \widetilde{\mathfrak{M}}$
by the equation
\begin{equation}\label{eq:X homogeneous coord}
	t_0^mx^{2}-\prod_{j=1}^m\Bigl(\sum_{i=0}^{n} t_{ij} x_i\Bigr)=0.
\end{equation}
Let $U$ be a Zariski open subset of $\mathbb{P}_{\mathbb{K}}^{(n+1)m}\setminus\overline{D_0}$
such that $U\cap H_{\infty}\neq \varnothing$ and $t_{01}$ is nowhere vanishing on $U$, where $\overline{D_0}$ denotes the Zariski closure of $D_0$ in $\mathbb{P}_{\mathbb{K}}^{(n+1)m}$.
Then on $\widetilde{\mathfrak{M}}\cap U$, the defining equation \eqref{eq:X homogeneous coord}
takes the form
\begin{equation}\notag
  \left(\frac{t_0}{t_{01}}\right)^{m} x^{2}
  - \prod_{j=1}^{m}
    \left(\sum_{i=0}^{n} \frac{t_{ij}}{t_{01}}\, x_i\right)
  = 0.
\end{equation}
Define $\widetilde{\mathcal{X}}'$ to be the closed subscheme of 
$\mathbb{P}_{\mathbb{K}}\!\left(\frac{m}{2},1,\dots,1\right)\times U$
cut out by the equation
\[
x^{2}-\prod_{j=1}^m\Bigl(\sum_{i=0}^{n} \frac{t_{ij}}{t_{01}} x_i\Bigr)=0.
\]
Let
\(
f' : \widetilde{\mathcal{X}}' \longrightarrow U
\)
be the morphism induced by the projection onto the second factor.
 By \Cref{prop:X realized as quotient of Y}, the sheaf
\(R^n f'_* \mathbb{F}_\ell\) is finite and locally constant on $U$.
Moreover, over $\widetilde{\mathfrak{M}}\cap U$, the restrictions $\widetilde{\mathcal{X}}'|_{\widetilde{\mathfrak{M}}\cap U}=f^{^{\prime} -1}(\widetilde{\mathfrak{M}}\cap U)$ is isomorphic to $\widetilde{\mathcal{X}}|_{\widetilde{\mathfrak{M}}\cap U}=\tilde{f}^{-1}(\widetilde{\mathfrak{M}}\cap U)$ via the coordinate change $x\mapsto  (\frac{t_0}{t_{01}})^{\frac{m}{2}}x$. Hence the local system $(R^n\tilde{f}_* \mathbb{F}_\ell)|_{\widetilde{\mathfrak{M}}\cap U}$ extends to the local system \(R^n f'_* \mathbb{F}_\ell\) over $U$. 
As $H_{\infty}\cap U\neq \emptyset$, we see the monodromy representation \(\rho\) is unramified along
\(H_\infty\); equivalently, it is tamely ramified along \(H_\infty\), and for any
lasso \(\gamma_\infty\) around \(H_\infty\) we have
\(\rho(\gamma_\infty)=\mathrm{id}\).
\end{proof}

Now fix a point $s=(v_1,\dots, v_m)\in \widetilde{\mathfrak{M}}\subset \mathbb{A}_{\mathbb{K}}^{(n+1)m}$, where each $v_i\in \mathbb{A}_{\mathbb{K}}^{n+1}$, and identify $\mathbb{A}_{\mathbb{K}}^{(n+1)m}$ with the product $(\mathbb{A}_{\mathbb{K}}^{n+1})^m$.
For any $i$, consider the morphism $\varphi_i \colon \mathbb{A}_{\mathbb{K}}^{1}\setminus \{0\} \to \widetilde{\mathfrak{M}}$ defined by $t \mapsto (v_1,\dots, v_{i-1}, tv_i, v_{i+1},\dots, v_m)$. It induces the homomorphism $\varphi_{i*} \colon \pi_1(\mathbb{A}_{\mathbb{K}}^{1}\setminus \{0\}, 1) \to \pi_1(\widetilde{\mathfrak{M}}, s)$. The double cover
\[
\mathbb{A}_{\mathbb{K}}^{1}\setminus\{0\}
\xrightarrow{\ \pi\ }
\mathbb{A}_{\mathbb{K}}^{1}\setminus\{0\},
\qquad t \mapsto t^{2},
\]
with deck transformation $t \mapsto -t$, yields the short exact sequence
\[
1 \longrightarrow \pi_{1}(\mathbb{A}_{\mathbb{K}}^{1}\setminus\{0\},1)
\xrightarrow{\ \pi_{*}\ }
\pi_{1}(\mathbb{A}_{\mathbb{K}}^{1}\setminus\{0\},1)
\longrightarrow \{\pm 1\}
\longrightarrow 1.
\]
Let $\iota_i \in \pi_{1}(\mathbb{A}_{\mathbb{K}}^{1}\setminus\{0\},1)$ be an element mapping to $-1$ in this sequence. 
We study the composition of $\varphi_{i*}$ with the monodromy representation
\[
\rho : \pi_{1}(\widetilde{\mathfrak{M}},s) \longrightarrow
GL\!\bigl(H^{n}(X,\mathbb{F}_{\ell})_{(1)}\bigr).
\]

\begin{proposition}\label{prop:involution induces -1 action}
The composition  
\[
\rho\circ \varphi_{i*}\colon 
\pi_1(\mathbb{A}_{\mathbb{K}}^{1}\setminus\{0\},1)
\longrightarrow 
GL\!\bigl(H^{n}(X,\mathbb{F}_{\ell})_{(1)}\bigr)
\]
is trivial on the subgroup $\operatorname{Im}\pi_{*}$, and moreover
\[
\rho(\varphi_{i*}\iota_i)= -\mathrm{id}
\quad\text{in } 
GL\bigl(H^n(X,\mathbb{F}_\ell)_{(1)}\bigr).
\]
\end{proposition}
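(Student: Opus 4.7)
The plan is to pull the family back along $\varphi_i \circ \pi$ and exhibit an explicit trivialization; the action of the deck transformation on this trivialization will then identify $\rho(\varphi_{i*}\iota_i)$ with the double-cover involution on $X$.

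First, I would work out the family $\varphi_i^* \widetilde{\mathcal{X}}$ over $\mathbb{A}_\mathbb{K}^1 \setminus \{0\}$ explicitly. Writing $\ell_j^{(s)}=\sum_i (v_j)_i x_i$ for the linear forms at $s$, the map $\varphi_i$ replaces $\ell_i^{(s)}$ by $t\,\ell_i^{(s)}$ and leaves the other $\ell_j^{(s)}$ unchanged. So $\varphi_i^*\widetilde{\mathcal{X}}$ is the closed subscheme of $\mathbb{P}_{\mathbb{K}}(\tfrac{m}{2},1,\dots,1)\times(\mathbb{A}_\mathbb{K}^1\setminus\{0\})$ cut out by
\[
x^{2} \;=\; t\prod_{j=1}^{m}\ell_j^{(s)}.
\]
Pulling back further along $\pi\colon u\mapsto t=u^2$, the equation becomes $x^{2}=u^{2}\prod_j \ell_j^{(s)}$. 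The substitution $x=u\,x'$ is a well-defined automorphism of $\mathbb{P}_{\mathbb{K}}(\tfrac{m}{2},1,\dots,1)\times(\mathbb{A}_\mathbb{K}^1\setminus\{0\})$ over the base (because $x$ has a single weight and $u$ is a unit), and transforms the equation into $x'^{2}=\prod_j \ell_j^{(s)}$. Hence
\[
\pi^{*}\varphi_{i}^{*}\widetilde{\mathcal{X}} \;\cong\; (\mathbb{A}_\mathbb{K}^1\setminus\{0\})\times X
\]
as schemes over $\mathbb{A}_\mathbb{K}^1\setminus\{0\}$.

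Next, I would feed this back through proper base change: the local system $\pi^{*}\varphi_{i}^{*}(R^n\tilde f_{*}\mathbb{F}_\ell)$ is canonically identified with the constant sheaf $H^{n}(X,\mathbb{F}_\ell)$ on $\mathbb{A}_\mathbb{K}^1\setminus\{0\}$, so its monodromy is trivial. Since the induced representation on $\pi_1(\mathbb{A}_\mathbb{K}^1\setminus\{0\},1)$ coming from the pullback is exactly $\rho\circ\varphi_{i*}\circ\pi_{*}$, this proves triviality on $\operatorname{Im}\pi_{*}$ and in particular shows that $\rho\circ\varphi_{i*}$ factors through the quotient $\{\pm1\}$.

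To compute the image of $\iota_i$, I would use descent: the local system $\varphi_{i}^{*}(R^n\tilde f_{*}\mathbb{F}_\ell)$ is recovered from the trivial local system on the cover together with the action of $\operatorname{Gal}(\pi)=\langle\sigma\rangle$ on $\pi^{*}\varphi_{i}^{*}\widetilde{\mathcal{X}}$ induced by $u\mapsto -u$, and the monodromy of $\iota_i$ on a fiber equals the corresponding $\sigma$-action on that fiber. Under the isomorphism $x=u\,x'$, the automorphism $u\mapsto -u$, $x\mapsto x$ of the left-hand side pulls over to $u\mapsto -u$, $x'\mapsto -x'$ on the right-hand side; on each fiber $X$ this is precisely the covering involution $X\to X$. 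By the definition of $H^n(X,\mathbb{F}_\ell)_{(1)}$ as the $(-1)$-eigenspace of this involution, we conclude $\rho(\varphi_{i*}\iota_i)=-\mathrm{id}$.

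The only subtlety I anticipate is checking carefully that the substitution $x\mapsto ux'$ really defines an isomorphism of the ambient weighted projective bundle over $\mathbb{A}_\mathbb{K}^1\setminus\{0\}$ (one must verify compatibility with the weighted $\mathbb{G}_m$-action, which is straightforward because only the weight-$m/2$ variable $x$ is touched) and that the comparison of descent data with the monodromy of $\iota_i$ gives the correct sign of the involution. The rest is bookkeeping.
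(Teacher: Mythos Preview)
Your proposal is correct and follows essentially the same line as the paper's proof: both pull back along $\varphi_i\circ\pi$, trivialize the resulting family via the coordinate change $x\mapsto ux'$ (the paper writes this as $x\mapsto x/t$), and then read off that the deck transformation $u\mapsto -u$ induces the covering involution $x'\mapsto -x'$ on each fiber. Your extra remarks on the weighted-projective compatibility of the substitution and on the descent interpretation of $\rho(\varphi_{i*}\iota_i)$ only make explicit what the paper leaves implicit.
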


\begin{proof}
Suppose $v_j=(a_{j0},\dots, a_{jn})$, and let $\ell_j=\sum_{i=0}^{n} a_{ji} x_i$ be the corresponding linear form. Under $\varphi_i\circ \pi \colon \mathbb{A}_{\mathbb{K}}^1\setminus \{0\} \to \widetilde{\mathfrak{M}}$, the pullback of $\widetilde{\mathcal{X}}$ becomes $\widetilde{\mathcal{X}}' \to \mathbb{A}_{\mathbb{K}}^1\setminus \{0\}$ defined by $x^2 - t^2 \prod_{j=1}^{m} \ell_j = 0$. By the change of coordinates $x \mapsto x/t$, $\widetilde{\mathcal{X}}'$ is isomorphic to the trivial family $X \times (\mathbb{A}_{\mathbb{K}}^1\setminus \{0\})$. Hence the pullback local system $(\varphi_i\circ \pi)^* R^n \tilde{f}_* \mathbb{F}_\ell$ is constant, so $\rho\circ \varphi_{i*}\circ \pi_*$ is trivial. 

Moreover, under this trivialization, the involution $t \mapsto -t$ on $\mathbb{A}_{\mathbb{K}}^1\setminus \{0\}$ induces $x \mapsto -x$ on $X$, which acts as $-1$ on the \emph{\(-1\) eigen subspace} $H^n(X,\mathbb{F}_\ell)_{(1)}$. Therefore, $\rho(\varphi_{i*}\iota) = -\operatorname{id}$ on $H^n(X,\mathbb{F}_\ell)_{(1)}$.
\end{proof}

For $\gamma\in \pi_1(\widetilde{\mathfrak{M}}, s)$ and $a\in  H^n(X,\mathbb{F}_\ell)_{(1)}$,
we also write \(\gamma\cdot a\) for \(\rho(\gamma)(a)\).  
Let \(S\) be the following set of vanishing cycles
\[
S \coloneqq \{\;\gamma\cdot \delta_{j_0\dots j_n} \mid \gamma\in \pi_1(\widetilde{\mathfrak{M}}, s),  1\le j_0<\dots < j_n\le m\;\}.
\]
For any $\delta\in S$, define the following Picard--Lefschetz transformation:
\[T_\delta(a) \coloneqq 
\begin{cases}
a-(-1)^{\frac{n}{2}}2^{m-n-1}(a,\delta)\delta, & n \text{ even},\\[4pt]
a-(-1)^{\frac{n-1}{2}}2^{m-n-1}(a,\delta)\delta, & n \text{ odd}.
\end{cases}
	\]

\begin{proposition}\label{prop:vanishing cycles span the whole space of -1}
$\rho(\pi_1(\widetilde{\mathfrak{M}}, s))$ is generated by $\bigl\{T_\delta\mid \delta\in S\bigr\}$, and the set \(S\) linearlly spans the space \(H^n(X,\mathbb{F}_\ell)_{(1)}\).
\end{proposition}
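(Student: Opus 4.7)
The plan is to combine the Picard--Lefschetz formula of \Cref{prop:Picard--Lefschetz formula for X} with the tame-fundamental-group generation result \Cref{lemma:pi1t generated by lassos} and the involution argument from \Cref{prop:involution induces -1 action}.

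First, I would observe that by \Cref{prop:unramified along infty divisor}, the monodromy representation $\rho$ factors through $\pi_1^{D}(\widetilde{\mathfrak{M}}, s)$ with $D = D_0 \cup H_\infty$. Since $\widetilde{\mathfrak{M}}$ sits as an open subscheme of the regular, simply connected projective space $\mathbb{P}_{\mathbb{K}}^{(n+1)m}$, \Cref{lemma:pi1t generated by lassos} applies, and the tame fundamental group $\pi_1^{D}(\widetilde{\mathfrak{M}}, s)$ is topologically generated by conjugates of lassos around the irreducible components of $D$. These components are the hypersurfaces $D_{j_0,\dots,j_n}$ together with $H_\infty$; by \Cref{prop:unramified along infty divisor} any lasso around $H_\infty$ maps to the identity under $\rho$, so only the lassos $\gamma_{j_0\dots j_n}$ survive.

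Next, the Picard--Lefschetz formula \Cref{prop:Picard--Lefschetz formula for X} gives $\rho(\gamma_{j_0\dots j_n}) = T_{\delta_{j_0\dots j_n}}$, and for any $g\in \pi_1(\widetilde{\mathfrak{M}}, s)$ one checks directly that
\[
\rho(g\gamma_{j_0\dots j_n}g^{-1}) \;=\; \rho(g)\,T_{\delta_{j_0\dots j_n}}\,\rho(g)^{-1} \;=\; T_{\rho(g)\cdot \delta_{j_0\dots j_n}},
\]
which is $T_\delta$ for some $\delta \in S$. Because $R^n\tilde{f}_*\mathbb{F}_\ell$ is finite locally constant by \Cref{lemma:locally constant sheaf}, the image $\rho(\pi_1(\widetilde{\mathfrak{M}}, s))$ is a finite subgroup of $GL\bigl(H^n(X,\mathbb{F}_\ell)_{(1)}\bigr)$, so dense generation in the profinite topology coincides with generation as an abstract subgroup. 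This proves the first assertion.

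For the second assertion, let $V \subset H^n(X, \mathbb{F}_\ell)_{(1)}$ be the $\mathbb{F}_\ell$-subspace spanned by $S$. The definition of $T_\delta$ shows that $T_\delta(a) - a$ is a scalar multiple of $\delta$ and hence lies in $V$, for every $a\in H^n(X,\mathbb{F}_\ell)_{(1)}$ and every $\delta\in S$. Thus every generator of $\rho(\pi_1)$ acts trivially on the quotient $W := H^n(X,\mathbb{F}_\ell)_{(1)}/V$, and so does the whole image. On the other hand, \Cref{prop:involution induces -1 action} produces an element of $\pi_1(\widetilde{\mathfrak{M}}, s)$ whose image is $-\mathrm{id}$, which must also act trivially on $W$. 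Hence $2w = 0$ for all $w\in W$; since $\ell$ is odd, $2$ is a unit in $\mathbb{F}_\ell$, forcing $W = 0$ and $V = H^n(X, \mathbb{F}_\ell)_{(1)}$. The delicate step, and the reason the involution construction \Cref{prop:involution induces -1 action} is essential, is precisely this span argument: the Picard--Lefschetz relations alone constrain the action of the monodromy on $H^n/V$ to be trivial, but without the $-\mathrm{id}$ element they would not rule out a nonzero invariant quotient.
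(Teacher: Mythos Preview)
Your proof is correct and follows essentially the same route as the paper: tame generation by conjugates of lassos via \Cref{lemma:pi1t generated by lassos} applied to $\mathbb{P}_{\mathbb{K}}^{(n+1)m}$, triviality of $\rho$ at $H_\infty$, the Picard--Lefschetz identification $\rho(g\gamma_{j_0\dots j_n}g^{-1})=T_{g\cdot\delta_{j_0\dots j_n}}$, and then the element $-\mathrm{id}$ from \Cref{prop:involution induces -1 action}. For the span assertion the paper argues dually---it shows any $a$ with $(a,\delta)=0$ for all $\delta\in S$ is fixed by $\operatorname{Im}\rho$, hence $a=-a=0$, and then invokes non-degeneracy of the pairing---whereas your quotient argument reaches the same conclusion without using non-degeneracy; both are equally valid.
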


\begin{proof}
Since $\pi_1(\mathbb{P}_{\mathbb{K}}^{(n+1)m}, s)=1$, it follows from 
\Cref{lemma:pi1t generated by lassos} that the set 
\[
\bigl\{\gamma \gamma_{\infty} \gamma^{-1}\mid \gamma\in \pi_1(\widetilde{\mathfrak{M}}, s)\bigr\}
\;\cup\;
\bigl\{\gamma \gamma_{j_0\dots j_n} \gamma^{-1}\mid 
\gamma\in \pi_1(\widetilde{\mathfrak{M}}, s),\ 1\le j_0<\dots <j_n\le m\bigr\}
\]
generates a dense subgroup of $\pi_1^D(\widetilde{\mathfrak{M}}, s)$.  
By \Cref{prop:unramified along infty divisor}, we know that $\operatorname{Im}\rho$ is generated by
\[
\bigl\{\rho(\gamma\gamma_{j_0\dots j_n}\gamma^{-1})\mid 
\gamma\in \pi_1(\widetilde{\mathfrak{M}}, s),\ 1\le j_0<\dots <j_n\le m \bigr\}.
\]

Since the pairing $(\cdot,\cdot)$ is monodromy-invariant, the Picard--Lefschetz formula
\eqref{equ:Picard--Lefschetz formula} shows that, for any 
$a\in H^n(X,\mathbb{F}_\ell)_{(1)}$, 
\[
\rho(\gamma\gamma_{j_0\dots j_n}\gamma^{-1})(a)=T_{\gamma\cdot\delta_{j_0\dots j_n}}(a).
\]
Thus $\operatorname{Im}\rho=\rho(\pi_1(\widetilde{\mathfrak{M}}, s))$ is generated by the 
Picard--Lefschetz transformations $T_{\delta}$, $\delta\in S$.

Now take any $a\in H^n(X,\mathbb{F}_\ell)_{(1)}$ satisfying $(a,\delta)=0$ for all $\delta\in S$.  
Then $a$ is fixed by every $T_{\delta}$, and hence by $\operatorname{Im}\rho$.  
On the other hand, by \Cref{prop:involution induces -1 action}, we have 
$-\operatorname{id}\in \operatorname{Im}\rho$; therefore $a=-a$, and thus $a=0$.  
Since the pairing is non-degenerate on $H^n(X,\mathbb{F}_\ell)_{(1)}$, we conclude that the linear 
span of $S$ is all of $H^n(X,\mathbb{F}_\ell)_{(1)}$.
\end{proof}

\section{Connections with hyperelliptic curves}
There is a remarkable locus in $\mathfrak{M}$ where the double cover of $\mathbb{P}_{\mathbb{K}}^n$ arises from a hyperelliptic curve. In this section, we describe this correspondence and examine the relationship between the corresponding cohomologies.

First, we identify the symmetric product $Sym^n \mathbb{P}^1_{\mathbb{K}}$ with $\mathbb{P}^n_{\mathbb{K}}$. Given $m$ pairwise distinct points $p_1,\dots,p_m$ on $\mathbb{P}^1_{\mathbb{K}}$, we obtain $m$ hyperplanes in $\mathbb{P}^n_{\mathbb{K}}$ defined by
\(H_i \coloneqq \{p_i\}\times \mathbb{P}^1_{\mathbb{K}}\times\dots\times \mathbb{P}^1_{\mathbb{K}} \subset Sym^n \mathbb{P}^1_{\mathbb{K}} = \mathbb{P}^n_{\mathbb{K}}\). Since $p_1,\dots,p_m$ are pairwise distinct, the hyperplane arrangement $(H_1,\dots,H_m)$ is in general position. Indeed, in homogeneous coordinates, if $p_i=[a_i:b_i]$, then $H_i$ is defined by the linear form $a_i^n x_0 + a_i^{n-1} b_i x_1 + \dots + b_i^n x_n=0$. Let $C$ be the double cover of $\mathbb{P}^1_{\mathbb{K}}$ branched along the divisor $\sum_{i=1}^{m}p_i$, and let $X_C$ denote the double cover of $\mathbb{P}^n_{\mathbb{K}}$ branched along $\sum_{i=1}^{m}H_i$. The double cover structure endows $C$ with a natural action of $\mathbb{Z}/2\mathbb{Z}$, and consequently $(\mathbb{Z}/2\mathbb{Z})^n$ acts on $C^n$. In addition, the symmetric group $S_n$ acts on $C^n$ by permuting the $n$ factors. Let $N'$ be the kernel of the summation homomorphism  
\[
(\mathbb{Z}/2\mathbb{Z})^n \longrightarrow \mathbb{Z}/2\mathbb{Z}, \quad (a_i)\longmapsto \sum_{i=1}^{n} a_i.
\]
Then $X_C\simeq C^n / (N'\rtimes S_n)$. Indeed, both $X_C$ and $C^n / (N'\rtimes S_n)$ are double covers of $\mathbb{P}^n_{\mathbb{K}}$ branched along $\sum_{i=1}^{m} H_i$, and the isomorphism follows from the uniqueness of such double covers (cf. \cite[Lemma 2.5]{sheng2015monodromy}). 

Next, we examine the relation between $H^n(X_C,\mathbb{F}_\ell)$ and $H^1(C,\mathbb{F}_\ell)$. 
Although $X_C = C^n/(N'\rtimes S_n)$, one cannot simply identify 
$H^n(X_C,\mathbb{F}_\ell)$ with the invariants $H^1(C,\mathbb{F}_\ell)^{\,N'\rtimes S_n}$, 
since $\ell$ may divide the order $\lvert N'\rtimes S_n\rvert$. 
We first analyze the case $\mathbb{K}=\mathbb{C}$, and then deduce the result over an arbitrary algebraically closed field.

\subsection{The $\mathbb{K} =\mathbb{C}$ case}\label{subsection:K=C case}Throughout this subsection we assume that the base field $\mathbb{K} $ is equal to $\mathbb{C}$, all varieties are equipped with the Euclidean topology, and all cohomology groups are taken to be singular cohomology.

Identify $X_C$ with $C^n / (N' \rtimes S_n)$ and let $Z \coloneqq C^n / S_n$. 
Then we have finite morphisms 
$C^n \stackrel{\pi_1}{\longrightarrow} Z \stackrel{\pi_2}{\longrightarrow} X_C$.

\begin{proposition}\label{prop:X to Cn/S_n direct summand}
For any $i \ge 0$, the homomorphism $\pi_2^\ast: H^i(X_C, \mathbb{Z}[\tfrac{1}{2}]) \to H^i(Z, \mathbb{Z}[\tfrac{1}{2}])$ admits a left inverse. 
That is, there exists a homomorphism 
$\mu: H^i(Z, \mathbb{Z}[\tfrac{1}{2}]) \to H^i(X, \mathbb{Z}[\tfrac{1}{2}])$ 
of $\mathbb{Z}[\tfrac{1}{2}]$-modules such that $\mu \circ \pi_2^\ast = \operatorname{id}$.
\end{proposition}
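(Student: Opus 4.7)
The map $\pi_2\colon Z\to X_C$ is finite surjective of generic degree $[G:S_n]=|N'|=2^{n-1}$, since $K(Z)=K(C^n)^{S_n}$ and $K(X_C)=K(C^n)^{G}$. The strategy is to construct a trace/transfer map
\[
(\pi_2)_!\colon H^i(Z,\mathbb{Z}[\tfrac{1}{2}])\longrightarrow H^i(X_C,\mathbb{Z}[\tfrac{1}{2}]),\qquad (\pi_2)_!\circ\pi_2^{\ast}=2^{n-1}\cdot\operatorname{id},
\]
and then take $\mu:=2^{-(n-1)}(\pi_2)_!$, which makes sense because $2$ is invertible in $\mathbb{Z}[\tfrac{1}{2}]$.

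To build $(\pi_2)_!$ concretely, I would use the common Galois cover $\beta=\pi_2\circ\alpha\colon C^n\to X_C$, where $\alpha\colon C^n\to Z$ is the quotient by $S_n$. For $\omega\in H^i(Z,\mathbb{Z}[\tfrac{1}{2}])$, the class $\alpha^{\ast}\omega$ lies in $H^i(C^n,\mathbb{Z}[\tfrac{1}{2}])^{S_n}$, and the sum
\[
\widetilde{\omega}\;=\;\sum_{g\in N'}g^{\ast}(\alpha^{\ast}\omega)
\]
is $G$-invariant: $N'$-invariance is built in, and $S_n$-invariance follows from the normality $N'\triangleleft G$, since $S_n$-conjugation permutes $N'$ and hence merely relabels the summation index. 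When $\omega=\pi_2^{\ast}\eta$ one computes $\widetilde{\omega}=|N'|\cdot\beta^{\ast}\eta=2^{n-1}\beta^{\ast}\eta$, as desired.

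The main obstacle is to descend $\widetilde{\omega}$ from $H^i(C^n,\mathbb{Z}[\tfrac{1}{2}])^{G}$ back to a class on $X_C$: the naive identification $H^i(X_C,R)\cong H^i(C^n,R)^{G}$ requires $|G|=2^{n-1}\cdot n!$ to be invertible in $R$, and $\mathbb{Z}[\tfrac{1}{2}]$ is insufficient for $n\ge 3$. To avoid inverting $n!$, I would instead build $(\pi_2)_!$ directly at the sheaf level via the trace for the finite morphism $\pi_2$: construct $\operatorname{tr}\colon(\pi_2)_{\ast}\underline{\mathbb{Z}[\tfrac{1}{2}]}_{Z}\to\underline{\mathbb{Z}[\tfrac{1}{2}]}_{X_C}$ whose composition with the adjunction unit $\underline{\mathbb{Z}[\tfrac{1}{2}]}_{X_C}\to(\pi_2)_{\ast}\underline{\mathbb{Z}[\tfrac{1}{2}]}_{Z}$ is multiplication by the generic degree $2^{n-1}$, and take $(\pi_2)_!$ to be the induced map on cohomology. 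This construction uses only that $\deg\pi_2$, not $|G|$, is invertible in the coefficient ring.
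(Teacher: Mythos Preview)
Your approach is essentially the same as the paper's: both construct a sheaf-level trace $\operatorname{tr}\colon(\pi_2)_*\underline{\mathbb{Z}[\tfrac12]}_Z\to\underline{\mathbb{Z}[\tfrac12]}_{X_C}$ splitting the unit up to the generic degree $2^{n-1}$, then divide by $2^{n-1}$. The paper carries this out explicitly (weighting connected components of $\pi_2^{-1}(V)$ by their local degrees over the unbranched locus, citing \cite{aguilar2006transfers}), whereas you invoke the transfer more abstractly after correctly discarding the $C^n$-averaging route because it would require inverting $n!$.
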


\begin{proof}
This follows from \cite[Thm.~5.4]{aguilar2006transfers}. For the reader’s convenience, we include the following self-contained proof. Let $R_X$ (resp. $R_Z$) denote the constant sheaf with value $R = \mathbb{Z}[\tfrac{1}{2}]$ on $X_C$ (resp. $Z$). Since $\phi:X_C \to \mathbb{P}^n$ is the double cover branched along $\sum_{i=1}^{m} H_i$, the subset 
$U \coloneqq X_C \setminus \phi^{-1}(\bigcup_{i=1}^{m} H_i)$ 
is a nonempty Zariski open subset of the irreducible variety $X_C$.  
The map $\pi_2$ then restricts to a covering map 
$\pi_2^{-1}(U) \to U$ 
of degree $\lvert N'\rvert = 2^{\,n-1}$.
 For any open connected subset $V \subset X_C$, by considering its image in $\mathbb{P}^n$, we see that the intersection $V \cap U$ remains connected.
 Then for any connected component $W$ of $\pi_2^{-1}(V)$, the intersection $\pi_2^{-1}(U)\cap W$ is nonempty, and the map $\pi_2$ induces a covering 
$\pi_2^{-1}(U)\cap W \to U\cap V$.
 Thus the connected component decomposition 
$\pi_2^{-1}(V)=W_1 \sqcup \cdots \sqcup W_s$ 
is finite, and for each $j$, the map 
$\pi_2^{-1}(U)\cap W_j \xrightarrow{\ \pi_2\ } U\cap V$ 
is a covering of degree $d_j$.
 Note that 
$(\pi_{2\ast} R_Z)(V)=\bigoplus_{j=1}^{s} R_Z(W_j)=\bigoplus_{j=1}^{s} R$. 
Define a map
\[
(\pi_{2\ast} R_Z)(V)=\bigoplus_{j=1}^{s} R \ \longrightarrow\ R_X(V)=R,\qquad
(b_j)\ \longmapsto\ \tfrac{1}{2^{\,n-1}}\sum_{j=1}^{s} d_j b_j,
\]
which induces a homomorphism 
$\mu : \pi_{2\ast} R_Z \to R_X$ 
of sheaves.
As $\sum_{j=1}^{s} d_j = 2^{n-1}$, we have $\mu \circ \iota = \operatorname{id}$, 
where $\iota \colon R_X \rightarrow \pi_{2\ast} R_Z$ is the natural inclusion.
By the Leray spectral sequence, $H^i(Z, \mathbb{Z}[\tfrac{1}{2}]) \simeq H^i(X_C, \pi_{2\ast} R_Z)$. 
Hence $\mu$ induces a homomorphism 
$H^i(Z, \mathbb{Z}[\tfrac{1}{2}]) \rightarrow H^i(X_C, \mathbb{Z}[\tfrac{1}{2}])$, 
which we still denote by $\mu$. This is the required homomorphism. 
\end{proof}
As a consequence, $H^n(X_C, \mathbb{Z}[\tfrac{1}{2}])$ is a direct summand of $H^n(Z, \mathbb{Z}[\tfrac{1}{2}])$. 
Note $H^n(Z, \mathbb{Z})$ is torsion-free (\cite[(12.3)]{macdonald1962symmetric}), the cohomology groups 
$H^n(Z, \mathbb{Z}[\tfrac{1}{2}])$ and 
$H^n(X_C, \mathbb{Z}[\tfrac{1}{2}])$ are both free $\mathbb{Z}[\tfrac{1}{2}]$-modules of finite rank.
By \cite[(1.2)]{macdonald1962symmetric}, 
$\pi_2 \circ \pi_1$ and $\pi_1$ induce the isomorphisms 
$H^n(X_C, \mathbb{Q}) \simeq H^n(C^n, \mathbb{Q})^{N'\rtimes S_n}$ and 
$H^n(Z, \mathbb{Q}) \simeq H^n(C^n, \mathbb{Q})^{S_n}$. 
Thus we obtain the following commutative diagram of injective homomorphisms:
\begin{equation}\label{diagram:all cohomologies are embedded in Hn Cn Q}
\begin{tikzcd}
	{H^n(X_C, \mathbb{Z}[\frac{1}{2}])} & {H^n(Z, \mathbb{Z}[\frac{1}{2}])} & {H^n(C^n, \mathbb{Z}[\frac{1}{2}])} \\
	{H^n(X_C, \mathbb{Q})} & {H^n(Z, \mathbb{Q})} & {H^n(C^n, \mathbb{Q})}
	\arrow["{\pi_2^\ast}", hook, from=1-1, to=1-2]
	\arrow[hook, from=1-1, to=2-1]
	\arrow["{\pi_1^\ast}", hook, from=1-2, to=1-3]
	\arrow[hook, from=1-2, to=2-2]
	\arrow[hook, from=1-3, to=2-3]
	\arrow[hook, from=2-1, to=2-2]
	\arrow[hook, from=2-2, to=2-3]
\end{tikzcd}
\end{equation}

For $i=1, \dots, n$, let $p_i \colon C^n \longrightarrow C$ be the projection to the $i$-th factor.  
We define $W$ to be the $\mathbb{Z}[\tfrac{1}{2}]$-submodule of $H^n(C^n, \mathbb{Z}[\tfrac{1}{2}])$ 
generated by the set:
\[\left\{p_1^\ast \alpha_1\cup p_2^\ast \alpha_2\cup\dots \cup p_n^\ast \alpha_n \mid \alpha_i\in H^1(C, \mathbb{Z}[\frac{1}{2}])\right\}.\]
Note that by the K\"unneth formula, the homomorphism 
\[
H^1(C, \mathbb{Z}[\tfrac{1}{2}])^{\otimes n} \stackrel{\sim}{\longrightarrow} W, \quad
\alpha_1 \otimes \dots \otimes \alpha_n \longmapsto 
p_1^\ast \alpha_1 \cup p_2^\ast \alpha_2 \cup \dots \cup p_n^\ast \alpha_n
\]
is an isomorphism. Let $W_1$ be the $\mathbb{Z}[\frac{1}{2}]$-submodule of $W$ generated by the set:
\[\left\{\mathop{\sum }\limits_{\sigma\in S_n}^{}\operatorname{sgn}(\sigma) \ p_1^\ast \alpha_{\sigma(1)}\cup p_2^\ast \alpha_{\sigma(2)}\cup\dots \cup p_n^\ast \alpha_{\sigma(n)} \mid \alpha_i\in H^1(C, \mathbb{Z}[\frac{1}{2}])\right\}.\]

Denote by $H^n(X_C, \mathbb{Z}[\tfrac{1}{2}])_{(1)}$ the $-1$ eigenspace of 
$H^n(X_C, \mathbb{Z}[\tfrac{1}{2}])$ under the action of the group $\mathbb{Z}/2\mathbb{Z}$.
\begin{proposition}\label{prop:determine the lattice of Hn X}
  In the diagram \eqref{diagram:all cohomologies are embedded in Hn Cn Q}, $\pi_1^\ast\circ \pi_2^\ast $ induces  an isomorphism:
  \[\pi_1^\ast\circ \pi_2^\ast: H^n(X_C, \mathbb{Z}[\frac{1}{2}])_{(1)} \stackrel{\sim}{\longrightarrow} W_1.\]
\end{proposition}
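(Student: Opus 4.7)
The plan is to show that $\pi_1^*\circ \pi_2^*$ is injective on $H^n(X_C,\mathbb{Z}[\tfrac{1}{2}])_{(1)}$ and that its image coincides with $W_1$. Write $g = \pi_2\circ\pi_1$, $G = N'\rtimes S_n$, and introduce the overgroup $\widetilde{G} = (\mathbb{Z}/2\mathbb{Z})^n\rtimes S_n$; then $G\trianglelefteq\widetilde{G}$ with quotient $\mathbb{Z}/2\mathbb{Z}$ generated by $e_1 = (1,0,\ldots,0)$, which lifts the double-cover involution $\tau_X$ of $X_C$ to $C^n$ through $g$.

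For injectivity, $\pi_2^*$ is split injective via the retraction $\mu$ from \Cref{prop:X to Cn/S_n direct summand}, and $\pi_1^*$ is injective over $\mathbb{Z}[\tfrac{1}{2}]$ because $H^n(Z,\mathbb{Z})$ is torsion-free while $\pi_1^*$ is rationally injective; the composition is therefore injective. For the image containment, note that for $\omega\in H^n(X_C,\mathbb{Z}[\tfrac{1}{2}])_{(1)}$ the class $g^*\omega$ is $G$-invariant and satisfies $e_1^* g^*\omega = -g^*\omega$, so it is $\chi$-isotypic under $\widetilde{G}$ for the sign character $\chi(a,\sigma) = (-1)^{a_1+\cdots+a_n}$. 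Since $\tau_C^*$ acts as $(-1)^k$ on $H^k(C,\mathbb{Z}[\tfrac{1}{2}])$, the K\"unneth decomposition forces this $\chi$-isotypic component to sit inside $H^1(C,\mathbb{Z}[\tfrac{1}{2}])^{\otimes n} = W$, and taking $S_n$-invariants (which coincide with the antisymmetric tensors because of the Koszul sign appearing in the pullback) recovers exactly $W_1$.

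Surjectivity is the subtle step. The plan is to invoke the integral refinement $\pi_1^*\colon H^n(Z,\mathbb{Z}[\tfrac{1}{2}]) \xrightarrow{\sim} H^n(C^n,\mathbb{Z}[\tfrac{1}{2}])^{S_n}$, which should follow from Macdonald's explicit integral description of $H^*(\operatorname{Sym}^n C,\mathbb{Z})$. Granted this, any $w\in W_1\subset H^n(C^n,\mathbb{Z}[\tfrac{1}{2}])^{S_n}$ has a unique preimage $z\in H^n(Z,\mathbb{Z}[\tfrac{1}{2}])$. Over $\mathbb{Q}$, $z$ lies in $\pi_2^* H^n(X_C,\mathbb{Q})$; and since $\pi_2^* H^n(X_C,\mathbb{Z}[\tfrac{1}{2}])$ is a direct summand (hence saturated) of $H^n(Z,\mathbb{Z}[\tfrac{1}{2}])$ by \Cref{prop:X to Cn/S_n direct summand}, we conclude $z = \pi_2^*\omega$ for some $\omega\in H^n(X_C,\mathbb{Z}[\tfrac{1}{2}])$. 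Finally, $g^*(\tau_X^*\omega+\omega) = e_1^*w + w = 0$ by the $\chi$-isotypic property of $w$, and injectivity of $g^*$ forces $\tau_X^*\omega = -\omega$, placing $\omega$ in $H^n(X_C,\mathbb{Z}[\tfrac{1}{2}])_{(1)}$ with $g^*\omega = w$.

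The main obstacle is precisely the integral isomorphism $\pi_1^*\colon H^n(Z,\mathbb{Z}[\tfrac{1}{2}])\xrightarrow{\sim} H^n(C^n,\mathbb{Z}[\tfrac{1}{2}])^{S_n}$: rationally it is immediate, but because $n!$ is not invertible in $\mathbb{Z}[\tfrac{1}{2}]$ the naive averaging over $S_n$ is unavailable, and one must rely on Macdonald's fine integer-coefficient calculation of symmetric-product cohomology (or else compare the discriminants of the Poincar\'e pairings on the two lattices to verify that the index of the image in $W_1$ is a power of $2$, hence a unit in $\mathbb{Z}[\tfrac{1}{2}]$).
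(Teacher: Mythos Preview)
Your proof is correct and follows the same approach as the paper: injectivity from the diagram, the containment of the image in $W_1$ via K\"unneth and the $\widetilde{G}$-action, and surjectivity by combining Macdonald's integral description of $H^*(\operatorname{Sym}^n C,\mathbb{Z})$ with the saturation of $\pi_2^* H^n(X_C,\mathbb{Z}[\tfrac12])$ inside $H^n(Z,\mathbb{Z}[\tfrac12])$ coming from \Cref{prop:X to Cn/S_n direct summand}. Your acknowledged obstacle is handled identically in the paper, which cites Macdonald (and S\'eroul) only for the weaker containment $W_1\subset\pi_1^*\bigl(H^n(Z,\mathbb{Z}[\tfrac12])\bigr)$---precisely what your argument actually uses, since uniqueness of the preimage $z$ already follows from the injectivity of $\pi_1^*$.
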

\begin{proof}
Set $V \coloneqq \pi_1^\ast \circ \pi_2^\ast(H^n(X_C, \mathbb{Z}[\tfrac{1}{2}])_{(1)}) \subset H^n(C^n, \mathbb{Z}[\tfrac{1}{2}])$. 
Since $\pi_1^\ast \circ \pi_2^\ast$ is injective, it suffices to show $V = W_1$. 
By the K\"unneth formula, $W_1$ is the $-1$ eigenspace of $H^n(C^n, \mathbb{Z}[\tfrac{1}{2}])^{N'\rtimes S_n}$, so $V \subset W_1$.  

Tensoring with $\mathbb{Q}$ gives $V \otimes_{\mathbb{Z}[\tfrac{1}{2}]} \mathbb{Q} = \pi_1^\ast \circ \pi_2^\ast(H^n(X_C, \mathbb{Q})_{(1)}) = W_1 \otimes_{\mathbb{Z}[\tfrac{1}{2}]} \mathbb{Q}$. 

By \cref{prop:X to Cn/S_n direct summand}, there is a submodule $M \subset H^n(Z, \mathbb{Z}[\tfrac{1}{2}])$ with 
\[H^n(Z, \mathbb{Z}[\tfrac{1}{2}]) = \pi_2^\ast H^n(X_C, \mathbb{Z}[\tfrac{1}{2}])_{(1)} \oplus M.\] 
Using \cite{macdonald1962symmetric} (see also \cite[Prop. 4.3.11]{seroul1972anneau}), we have $W_1 \subset \pi_1^\ast(H^n(Z, \mathbb{Z}[\tfrac{1}{2}]))$, so for any $\alpha \in W_1$, there exist $\beta \in H^n(X_C, \mathbb{Z}[\tfrac{1}{2}])_{(1)}$ and $\gamma \in M$ such that $\alpha = \pi_1^\ast(\pi_2^\ast(\beta)) + \pi_1^\ast(\gamma)$.  Since $V \otimes_{\mathbb{Z}[\tfrac{1}{2}]} \mathbb{Q} = W_1 \otimes_{\mathbb{Z}[\tfrac{1}{2}]} \mathbb{Q}$, there exists $d \in \mathbb{Z}_{>0}$ with $d \alpha \in V$, i.e., $d \alpha = \pi_1^\ast(\pi_2^\ast(\delta))$ for some $\delta \in H^n(X_C, \mathbb{Z}[\tfrac{1}{2}])_{(1)}$. Comparing, we get $\pi_1^\ast(\pi_2^\ast(d \beta)) + \pi_1^\ast(d \gamma) = \pi_1^\ast(\pi_2^\ast(\delta))$, and injectivity of $\pi_1^\ast$ implies $\pi_2^\ast(d \beta) + d \gamma = \pi_2^\ast(\delta)$, hence $d \gamma \in M \cap \pi_2^\ast H^n(X_C, \mathbb{Z}[\tfrac{1}{2}])_{(1)} = \{0\}$. 
Thus $\gamma = 0$ and $\alpha = \pi_1^\ast(\pi_2^\ast(\beta)) \in V$, giving $W_1 \subset V$ and finally $W_1 = V$.
\end{proof}

Recall $\ell\neq 2$ is a prime. Let $U_1$ be the $\mathbb{F}_\ell$-subspace of $H^n(C^n,\mathbb{F}_\ell)$ spanned by
\[
\Big\{\sum_{\sigma\in S_n}\operatorname{sgn}(\sigma)\,p_1^\ast\alpha_{\sigma(1)}\cup\cdots\cup p_n^\ast\alpha_{\sigma(n)} \mid \alpha_i\in H^1(C,\mathbb{F}_\ell)\Big\}.
\]
The map
\[
\alpha_1\wedge\cdots\wedge\alpha_n \longmapsto \sum_{\sigma\in S_n}\operatorname{sgn}(\sigma)\,p_1^\ast\alpha_{\sigma(1)}\cup\cdots\cup p_n^\ast\alpha_{\sigma(n)}
\]
induces an isomorphism of $\mathbb{F}_\ell$-vector spaces $\wedge^n H^1(C,\mathbb{F}_\ell) \xrightarrow{\sim} U_1$.

\begin{proposition}\label{prop:Hn X fl iso to U1}
The morphism $\pi_2\circ\pi_1\colon C^n\to X_C$ induces an isomorphism
\[
(\pi_2\circ\pi_1)^\ast\colon H^n(X_C,\mathbb{F}_\ell)_{(1)}\xrightarrow{\sim} U_1\simeq \wedge^n H^1(C,\mathbb{F}_\ell).
\]
\end{proposition}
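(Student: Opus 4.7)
The plan is to reduce to the $\mathbb{Z}[\tfrac{1}{2}]$-coefficient result of \Cref{prop:determine the lattice of Hn X} via a flat base change to $\mathbb{F}_\ell$, and then to identify $W_1 \otimes_{\mathbb{Z}[\tfrac{1}{2}]} \mathbb{F}_\ell$ with $U_1$ via the alternating-sum construction.

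The first step is to show that $H^i(X_C,\mathbb{Z}[\tfrac{1}{2}])$ is a free $\mathbb{Z}[\tfrac{1}{2}]$-module for every $i$. The proof of \Cref{prop:X to Cn/S_n direct summand} uses only the invertibility of $2$, so it applies in every degree and exhibits $H^i(X_C,\mathbb{Z}[\tfrac{1}{2}])$ as a direct summand of $H^i(Z,\mathbb{Z}[\tfrac{1}{2}])$. By Macdonald's theorem \cite[(12.3)]{macdonald1962symmetric}, $H^*(Z,\mathbb{Z})$ is torsion-free, so $H^*(Z,\mathbb{Z}[\tfrac{1}{2}])$ is finitely generated and free, and its direct summands are therefore free over the PID $\mathbb{Z}[\tfrac{1}{2}]$. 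In particular the $\operatorname{Tor}_1$ term in the universal coefficient sequence vanishes, yielding
\[
H^n(X_C,\mathbb{F}_\ell) \;\cong\; H^n(X_C,\mathbb{Z}[\tfrac{1}{2}]) \otimes_{\mathbb{Z}[\tfrac{1}{2}]} \mathbb{F}_\ell.
\]
The deck involution acts compatibly with base change, so splitting off the $(-1)$-eigenspaces (using $2\in\mathbb{F}_\ell^\times$) gives
\[
H^n(X_C,\mathbb{F}_\ell)_{(1)} \;\cong\; H^n(X_C,\mathbb{Z}[\tfrac{1}{2}])_{(1)} \otimes_{\mathbb{Z}[\tfrac{1}{2}]} \mathbb{F}_\ell.
\]
Combining with \Cref{prop:determine the lattice of Hn X} produces an isomorphism $(\pi_2\circ\pi_1)^*\colon H^n(X_C,\mathbb{F}_\ell)_{(1)}\xrightarrow{\sim} W_1 \otimes_{\mathbb{Z}[\tfrac{1}{2}]}\mathbb{F}_\ell$.

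The remaining step is to identify $W_1\otimes\mathbb{F}_\ell$ with $U_1$ inside $H^n(C^n,\mathbb{F}_\ell)$. For any commutative ring $R$ in which $2$ is invertible, consider the alternating-sum map
\[
\phi_R\colon \wedge^n H^1(C,R)\longrightarrow H^1(C,R)^{\otimes n},\qquad \alpha_1\wedge\cdots\wedge\alpha_n \;\longmapsto\; \sum_{\sigma\in S_n}\operatorname{sgn}(\sigma)\,\alpha_{\sigma(1)}\otimes\cdots\otimes\alpha_{\sigma(n)},
\]
regarded as landing in $H^n(C^n,R)$ via K\"unneth. A direct check on a basis of the free module $H^1(C,R)$ shows that $\phi_R$ is injective; by construction its image is $W_1$ when $R=\mathbb{Z}[\tfrac{1}{2}]$ and $U_1$ when $R=\mathbb{F}_\ell$. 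Since exterior powers of free modules commute with base change, $\phi_{\mathbb{Z}[\tfrac{1}{2}]}\otimes_{\mathbb{Z}[\tfrac{1}{2}]}\mathbb{F}_\ell=\phi_{\mathbb{F}_\ell}$, which furnishes a natural isomorphism $W_1\otimes_{\mathbb{Z}[\tfrac{1}{2}]}\mathbb{F}_\ell\xrightarrow{\sim} U_1$. Composing with the isomorphism of the previous paragraph delivers the required isomorphism $(\pi_2\circ\pi_1)^*\colon H^n(X_C,\mathbb{F}_\ell)_{(1)} \xrightarrow{\sim} U_1\simeq \wedge^n H^1(C,\mathbb{F}_\ell)$.

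The main technical hurdle is the $\operatorname{Tor}_1$-vanishing required for the universal coefficient theorem, which I would derive from Macdonald's integral torsion-freeness for the symmetric product $Z$ together with the transfer-type splitting of \Cref{prop:X to Cn/S_n direct summand} in every degree; the remainder is routine flat base change and a basis-level injectivity check for the alternating-sum map.
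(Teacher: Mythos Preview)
Your proof is correct and follows essentially the same approach as the paper: both reduce to \Cref{prop:determine the lattice of Hn X} via the universal coefficient theorem (using freeness of $H^i(X_C,\mathbb{Z}[\tfrac{1}{2}])$ via the direct-summand argument and Macdonald's torsion-freeness), and then identify $W_1\otimes_{\mathbb{Z}[1/2]}\mathbb{F}_\ell$ with $U_1$ inside $H^n(C^n,\mathbb{F}_\ell)$. The only cosmetic difference is that the paper phrases the last identification by asserting $W_1$ is a direct summand of $H^n(C^n,\mathbb{Z}[\tfrac{1}{2}])$, whereas you argue via injectivity of the alternating-sum map $\phi_R$ on a free basis together with base-change compatibility; these are equivalent.
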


\begin{proof}
We have shown $H^i(X_C,\mathbb{Z}[\tfrac{1}{2}])$ is a free $\mathbb{Z}[\tfrac{1}{2}]$-module. 
By the Universal Coefficient Theorem, the integral homology $H_i(X_C,\mathbb{Z})$ can have torsion only of $2$-power order. Hence
\begin{equation}\label{eq:1 eq in prop Hn X}
H^n(X_C,\mathbb{F}_\ell)_{(1)}\simeq H^n(X_C,\mathbb{Z}[\tfrac{1}{2}])_{(1)}\otimes_{\mathbb{Z}[\tfrac{1}{2}]}\mathbb{F}_\ell.
\end{equation}

One checks directly that $W_1$ is a direct summand of $H^n(C^n,\mathbb{Z}[\tfrac{1}{2}])$, i.e. there exists a submodule $W_2$ with $H^n(C^n,\mathbb{Z}[\tfrac{1}{2}])=W_1\oplus W_2$. 
Therefore the inclusion induces an injection
\begin{equation}\label{eq:eq 2 in Hn X}
W_1\otimes_{\mathbb{Z}[\tfrac{1}{2}]}\mathbb{F}_\ell \hookrightarrow H^n(C^n,\mathbb{Z}[\tfrac{1}{2}])\otimes_{\mathbb{Z}[\tfrac{1}{2}]}\mathbb{F}_\ell \simeq H^n(C^n,\mathbb{F}_\ell).
\end{equation}

Combining \Cref{prop:determine the lattice of Hn X}, \eqref{eq:1 eq in prop Hn X} and \eqref{eq:eq 2 in Hn X}, we see that 
\[(\pi_2\circ\pi_1)^\ast\colon H^n(X_C,\mathbb{F}_\ell)_{(1)}\to H^n(C^n,\mathbb{F}_\ell)\]
is injective and that its image equals $W_1\otimes_{\mathbb{Z}[\tfrac{1}{2}]}\mathbb{F}_\ell$, which by definition is $U_1$. This proves the proposition.
\end{proof}

\subsection{The general case} Assume $\mathbb{K}$ is an algebraically closed field in which
$2\ell$ is invertible. As above, let $U_1$ be the $\mathbb{F}_\ell$-subspace of $H^n(C^n,\mathbb{F}_\ell)$ spanned by
\[
\Big\{\sum_{\sigma\in S_n}\operatorname{sgn}(\sigma)\,p_1^\ast\alpha_{\sigma(1)}\cup\cdots\cup p_n^\ast\alpha_{\sigma(n)} \mid \alpha_i\in H^1(C,\mathbb{F}_\ell)\Big\}.
\]
The map
\[
\alpha_1\wedge\cdots\wedge\alpha_n \longmapsto \sum_{\sigma\in S_n}\operatorname{sgn}(\sigma)\,p_1^\ast\alpha_{\sigma(1)}\cup\cdots\cup p_n^\ast\alpha_{\sigma(n)}
\]
still induces an isomorphism of $\mathbb{F}_\ell$-vector spaces $\wedge^n H^1(C,\mathbb{F}_\ell) \xrightarrow{\sim} U_1$.
\begin{proposition}\label{prop:Hn X and H1 C for general K}
	The quotient morphism $\pi \colon C^n\to X_C$ induces an isomorphism
\[
\pi^\ast\colon H^n(X_C,\mathbb{F}_\ell)_{(1)}\xrightarrow{\sim} U_1\simeq \wedge^n H^1(C,\mathbb{F}_\ell).
\]
\end{proposition}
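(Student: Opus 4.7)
The plan is to deduce the statement over an arbitrary algebraically closed field $\mathbb{K}$ (in which $2\ell$ is invertible) from the complex-analytic case \Cref{prop:Hn X fl iso to U1} by a spreading-out and specialization argument.

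First, I would spread the entire construction out over $\operatorname{Spec}\mathbb{Z}[1/2\ell]$. Let $V \subset (\mathbb{P}^1_{\mathbb{Z}[1/2\ell]})^m$ denote the open subscheme parametrizing ordered $m$-tuples of pairwise distinct points. Over $V$ one has a universal hyperelliptic curve $\mathcal{C}\to V$ and its $n$-fold fiber product $\mathcal{C}^n\to V$; and via the identification $\operatorname{Sym}^n\mathbb{P}^1_V=\mathbb{P}^n_V$ together with the construction of \Cref{subsection:double_cover}, a universal family $\mathcal{X}_\mathcal{C}\to V$ of double covers of $\mathbb{P}^n_V$ branched along the associated hyperplane arrangement. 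Since the $m$ points are pairwise distinct, the arrangement is in general position (by the Vandermonde-type determinant recalled above), so all three families are smooth and proper over $V$, and $\pi\colon\mathcal{C}^n\to\mathcal{X}_\mathcal{C}$ is defined universally. Moreover $V$ is irreducible---as a non-empty open subscheme of the irreducible scheme $(\mathbb{P}^1_{\mathbb{Z}[1/2\ell]})^m$---and hence connected.

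Next, since $\ell$ is invertible on $V$ and the three families are smooth and proper, the higher direct images $R^i f_*\mathbb{F}_\ell$ are locally constant constructible \'etale sheaves on $V$. The deck involution on $\mathcal{X}_\mathcal{C}$ acts on $R^n(f_{\mathcal{X}_\mathcal{C}})_*\mathbb{F}_\ell$; since $\ell$ is odd, the $(-1)$-eigenspace $\mathcal{F}$ is a locally constant direct summand whose stalks are $H^n(X_C,\mathbb{F}_\ell)_{(1)}$. The antisymmetrizer composed with the K\"unneth inclusion yields an injection $\wedge^n R^1(f_\mathcal{C})_*\mathbb{F}_\ell\hookrightarrow R^n(f_{\mathcal{C}^n})_*\mathbb{F}_\ell$ (the antisymmetrizer $\wedge^n V\to V^{\otimes n}$ is injective for any free module $V$ of finite rank over any ring, by a direct basis computation); its image $\mathcal{G}$ is a locally constant sub-sheaf whose stalks are $U_1$.

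The key observation is then that on a connected scheme, a morphism of locally constant sheaves of finite-rank $\mathbb{F}_\ell$-modules has constant rank on stalks; in particular, it vanishes (respectively is an isomorphism) at every geometric point iff it does so at one. I would apply this first to the composition $\mathcal{F}\hookrightarrow R^n(f_{\mathcal{X}_\mathcal{C}})_*\mathbb{F}_\ell\xrightarrow{\pi^*} R^n(f_{\mathcal{C}^n})_*\mathbb{F}_\ell\twoheadrightarrow R^n(f_{\mathcal{C}^n})_*\mathbb{F}_\ell/\mathcal{G}$, which vanishes at any $\mathbb{C}$-point of $V$ by \Cref{prop:Hn X fl iso to U1}. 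Hence it vanishes on all of $V$, so $\pi^*(\mathcal{F})\subset \mathcal{G}$. Applying the observation next to the induced morphism $\pi^*\colon\mathcal{F}\to\mathcal{G}$, which by \Cref{prop:Hn X fl iso to U1} is an isomorphism at any $\mathbb{C}$-point, I would conclude that it is an isomorphism at every geometric point of $V$, in particular at the $\mathbb{K}$-point corresponding to the given configuration.

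The main obstacle I expect is the universal setup itself --- namely, verifying that $\mathcal{X}_\mathcal{C}\to V$ is genuinely smooth and proper in arbitrary residue characteristic different from $2$ and $\ell$, and that the formation of the local systems $\mathcal{F}$ and $\mathcal{G}$ truly commutes with base change so that their stalks recover exactly the spaces in the statement. Once this universal picture is secured, the remaining argument is a formal combination of the complex case with the connectedness of $V$.
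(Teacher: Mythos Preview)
Your spreading-out approach over $\operatorname{Spec}\mathbb{Z}[1/2\ell]$ is a legitimate and in some ways cleaner alternative to the paper's argument, which instead treats characteristic~$0$ by the Lefschetz principle (embedding a field of definition into $\mathbb{C}$) and characteristic~$p$ by lifting to the Witt vectors $W(\mathbb{K})$ and invoking specialization. Both routes ultimately reduce to \Cref{prop:Hn X fl iso to U1}; yours handles all characteristics uniformly by placing everything in a single connected family and using constancy of the rank of a morphism of lisse sheaves, whereas the paper works pointwise and avoids building a universal family.

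There is, however, one genuine gap. Your assertion that ``all three families are smooth and proper over $V$'' is false for $\mathcal{X}_\mathcal{C}$ once $n\ge 2$: around a point where $k\ge 2$ of the branch hyperplanes meet, the double cover is locally modelled on $x^2=z_1\cdots z_k$, which is singular along $\{x=z_i=z_j=0\}$. Hence smooth--proper base change does not directly yield local constancy of $R^n(f_{\mathcal{X}_\mathcal{C}})_*\mathbb{F}_\ell$. You anticipate exactly this obstacle in your final paragraph, and the fix is the mechanism of \Cref{lemma:locally constant sheaf}: pass to the smooth Kummer cover $\mathcal{Y}\to V$ (the proof of \Cref{lemma:smooth} works verbatim over any base on which $2$ is invertible), use $\mathcal{X}_\mathcal{C}\simeq \mathcal{Y}/N$ with $|N|=2^{m-1}$ prime to $\ell$, and conclude that $R^n(f_{\mathcal{X}_\mathcal{C}})_*\mathbb{F}_\ell \simeq \bigl(R^n(f_{\mathcal{Y}})_*\mathbb{F}_\ell\bigr)^N$ is a direct summand of a lisse sheaf, hence lisse. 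With this amendment your argument goes through. It is worth noting that the paper's Witt-vector step tacitly relies on the same device: the claimed specialization isomorphism for $H^n(X_C,\mathbb{F}_\ell)$ is not a consequence of smooth--proper base change for $X_C$ itself, but follows once one passes to the smooth Kummer cover and takes $N$-invariants.
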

\begin{proof}
 This follows from \Cref{prop:Hn X fl iso to U1} and standard comparison arguments. Indeed, when 
$\operatorname{char}\mathbb{K}=0$, one may choose a subfield 
$K\subset\mathbb{K}$ over which $C$ (and hence $X_C$) is defined, and such 
that $K$ embeds into $\mathbb{C}$. After base change to $\mathbb{C}$, the 
morphism $\pi^\ast$ is an isomorphism over $\mathbb{K}$ if and only if it is 
an isomorphism in the complex analytic setting.

If $\operatorname{char}\mathbb{K}=p>0$, we lift both $C^n$ and $X_C$ to the 
ring $W(\mathbb{K})$ of Witt vectors, and write $C^n_K$ and $(X_C)_K$ for 
their base change to $K=\operatorname{Frac}(W(\mathbb{K}))$. The 
specialization maps then induce isomorphisms
\[
H^n((X_C)_K,\mathbb{F}_\ell)\;\simeq\;H^n(X_C,\mathbb{F}_\ell),
\qquad
H^n(C^n_K,\mathbb{F}_\ell)\;\simeq\;H^n(C^n,\mathbb{F}_\ell).
\]
Consequently, we obtain the following commutative diagram:

\[\begin{tikzcd}
	{H^n(C^n, \mathbb{F}_\ell)} && {H^n(C^n_{K}, \mathbb{F}_\ell)} \\
	\\
	{H^n(X_{C}, \mathbb{F}_\ell)_{(1)}} && {H^n((X_{C})_K, \mathbb{F}_\ell)_{(1)}}
	\arrow["\sim", from=1-1, to=1-3]
	\arrow["{\pi^*}", from=3-1, to=1-1]
	\arrow["\sim", from=3-1, to=3-3]
	\arrow["{\pi^*_K}"', from=3-3, to=1-3]
\end{tikzcd}\]
Since $\operatorname{char} K=0$, $\pi^*_K\colon H^n((X_C)_K,\mathbb{F}_\ell)\to H^n(C^n_K,\mathbb{F}_\ell)$ induces an isomorphism $H^n((X_C)_K,\mathbb{F}_\ell)_{(1)}\simeq \wedge^n H^1(C_K,\mathbb{F}_\ell)$, so the above commutative diagram yields $\pi^*\colon H^n(X_C,\mathbb{F}_\ell)_{(1)}\xrightarrow{\sim} U_1\simeq \wedge^n H^1(C,\mathbb{F}_\ell)$.
\end{proof}

The correspondence \((p_1,\dots,p_m)\mapsto (H_1,\dots,H_m)\) from ordered \(m\)-tuples of distinct points on \(\mathbb{P}^1_{\mathbb{K}}\) to hyperplane arrangements yields an embedding of moduli spaces, which we now describe explicitly. Let \(\widetilde{\mathfrak{M}}_1\subset (\mathbb{A}^2_{\mathbb{K}})^m=\mathbb{A}^{2m}_{\mathbb{K}}\) be the open subscheme consisting of all \((p_1,\dots,p_m)\) with \(p_i\neq 0\) and such that the corresponding points in \(\mathbb{P}^1_{\mathbb{K}}\) are pairwise distinct. Writing \(p_i=(a_i,b_i)\), the assignment  
$p_i \mapsto  \check{H}_i \coloneqq (a_i^n,\; a_i^{\,n-1}b_i,\; \dots,\; b_i^n)$
induces an embedding \(\widetilde{\mathfrak{M}}_1 \hookrightarrow \widetilde{\mathfrak{M}}\). Over \(\widetilde{\mathfrak{M}}_1\) there is a universal family of hyperelliptic curves
\(f_1\colon \widetilde{\mathcal{C}}\to \widetilde{\mathfrak{M}}_1\), whose fiber over
\((p_1,\dots,p_m)\) is the double cover of \(\mathbb{P}^1_{\mathbb{K}}\) branched along
\(\sum_{i=1}^m p_i\). The local system \(R^1 f_{1*}\mathbb{F}_\ell\) on
\(\widetilde{\mathfrak{M}}_1\) is related to \(R^n \widetilde{f}_*\mathbb{F}_\ell\) on
\(\widetilde{\mathfrak{M}}\) by the following proposition.
\begin{proposition}\label{prop:H1C and HnX local system version}
 \((R^n \widetilde{f}_*\mathbb{F}_\ell)_{(1)}|_{\widetilde{\mathfrak{M}}_1} \simeq \wedge^n R^1 f_{1*}\mathbb{F}_\ell\).
\end{proposition}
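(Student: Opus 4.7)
The strategy is to globalize the fiberwise isomorphism of \Cref{prop:Hn X and H1 C for general K} to a sheaf isomorphism on $\widetilde{\mathfrak{M}}_1$. To this end, I would first construct a finite morphism $\pi\colon \widetilde{\mathcal{C}}^{\,n}\to \widetilde{\mathcal{X}}|_{\widetilde{\mathfrak{M}}_1}$ over $\widetilde{\mathfrak{M}}_1$ that fiberwise recovers the quotient $C^n\to C^n/(N'\rtimes S_n)\simeq X_C$. Here $\widetilde{\mathcal{C}}^{\,n}\coloneqq \widetilde{\mathcal{C}}\times_{\widetilde{\mathfrak{M}}_1}\cdots\times_{\widetilde{\mathfrak{M}}_1}\widetilde{\mathcal{C}}$ is the $n$-fold relative self-product, equipped with its natural $N'\rtimes S_n$-action over $\widetilde{\mathfrak{M}}_1$, where $N'$ acts by the hyperelliptic involutions on the factors and $S_n$ by permutation. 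The quotient $\widetilde{\mathcal{C}}^{\,n}/(N'\rtimes S_n)$ is a family of double covers of $\mathrm{Sym}^n\mathbb{P}^1_{\mathbb{K}}\cong \mathbb{P}^n_{\mathbb{K}}$ branched along the tautological hyperplane arrangement pulled back from $\widetilde{\mathfrak{M}}_1\hookrightarrow \widetilde{\mathfrak{M}}$; by the uniqueness of such double covers (a relative version of the argument showing $X_C\simeq C^n/(N'\rtimes S_n)$) it is canonically isomorphic to $\widetilde{\mathcal{X}}|_{\widetilde{\mathfrak{M}}_1}$ over $\widetilde{\mathfrak{M}}_1$.

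Writing $g\colon \widetilde{\mathcal{C}}^{\,n}\to \widetilde{\mathfrak{M}}_1$ for the structure morphism, pullback along $\pi$ yields $\pi^{\ast}\colon R^n\widetilde{f}_{\ast}\mathbb{F}_\ell|_{\widetilde{\mathfrak{M}}_1}\to R^n g_{\ast}\mathbb{F}_\ell$. Since $f_1$ is smooth and proper and $\ell$ is invertible in $\mathbb{K}$, the relative K\"unneth formula in \'etale cohomology provides a canonical decomposition
\[
R^n g_{\ast}\mathbb{F}_\ell \;\simeq\;\bigoplus_{i_1+\cdots+i_n=n} R^{i_1}f_{1\ast}\mathbb{F}_\ell\otimes\cdots\otimes R^{i_n}f_{1\ast}\mathbb{F}_\ell,
\]
in which the summand $(R^1 f_{1\ast}\mathbb{F}_\ell)^{\otimes n}$ carries a natural $S_n$-action by permutation of tensor factors. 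Define $\wedge^n R^1 f_{1\ast}\mathbb{F}_\ell$ as the image of the antisymmetrization endomorphism $\sum_{\sigma\in S_n}\operatorname{sgn}(\sigma)\sigma$, in parallel with the definition of $U_1$ in the previous subsection. Composing $\pi^{\ast}$ with projection onto this subsheaf and restricting to the $(-1)$-eigensheaf of the double-cover involution on the source, one obtains a canonical sheaf morphism
\[
\Phi\colon \bigl(R^n\widetilde{f}_{\ast}\mathbb{F}_\ell\bigr)_{(1)}\big|_{\widetilde{\mathfrak{M}}_1}\longrightarrow \wedge^n R^1 f_{1\ast}\mathbb{F}_\ell.
\]

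The final step is to verify that $\Phi$ is an isomorphism on every geometric stalk. By smooth and proper base change, at $s\in\widetilde{\mathfrak{M}}_1$ the stalks of source and target are canonically $H^n(X_C,\mathbb{F}_\ell)_{(1)}$ and $\wedge^n H^1(C,\mathbb{F}_\ell)$, respectively, and under these identifications $\Phi_s$ coincides with the isomorphism produced in \Cref{prop:Hn X and H1 C for general K}. Both source and target are finite locally constant sheaves on the irreducible scheme $\widetilde{\mathfrak{M}}_1$ (the source by \Cref{lemma:locally constant sheaf}, the target by smooth and proper base change applied to $f_1$), so a stalkwise isomorphism is an isomorphism of sheaves. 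The main technical obstacles are (i) the rigorous identification $\widetilde{\mathcal{C}}^{\,n}/(N'\rtimes S_n)\simeq \widetilde{\mathcal{X}}|_{\widetilde{\mathfrak{M}}_1}$ as families over $\widetilde{\mathfrak{M}}_1$, and (ii) ensuring that the relative K\"unneth decomposition is compatible with specialization, so that $\Phi_s$ genuinely recovers the fiberwise map of \Cref{prop:Hn X and H1 C for general K}; both are standard but require careful bookkeeping.
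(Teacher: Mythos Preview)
Your proposal is correct and follows the same approach as the paper: identify $\widetilde{\mathcal{X}}|_{\widetilde{\mathfrak{M}}_1}$ with $\widetilde{\mathcal{C}}^{\,n}/(N'\rtimes S_n)$ as families over $\widetilde{\mathfrak{M}}_1$, and then invoke the fiberwise isomorphism of \Cref{prop:Hn X and H1 C for general K}. The paper's proof is a two-line sketch that asserts exactly these two points without further detail; your write-up supplies the intermediate steps (the relative K\"unneth decomposition, the explicit construction of the sheaf map $\Phi$, and the stalkwise verification via proper base change) that the paper leaves implicit.
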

\begin{proof}
Since the restricted family \(\widetilde{\mathcal{X}}|_{\widetilde{\mathfrak{M}}_1}\)
is, by construction, isomorphic to \(\widetilde{\mathcal{C}}^{\,n}/(N'\rtimes S_n)\),
the result follows from \Cref{prop:Hn X and H1 C for general K}.
\end{proof}

Note that $S_m$ acts freely on the moduli spaces $\widetilde{\mathfrak{M}}_1$ and $\widetilde{\mathfrak{M}}$ by permuting the points and hyperplanes, respectively. The embedding 
\(i\colon \widetilde{\mathfrak{M}}_1 \hookrightarrow \widetilde{\mathfrak{M}}\) 
therefore descends to an embedding 
\(i\colon \widetilde{\mathfrak{M}}_1/S_m \hookrightarrow \widetilde{\mathfrak{M}}/S_m\).  
Fixing a base point \(s\in \widetilde{\mathfrak{M}}_1\) then yields the following diagram of homomorphisms between fundamental groups, where the columns form short exact sequences:
\begin{equation}\label{eq:diagram of fund groups}
\begin{tikzcd}
	{\pi_1(\widetilde{\mathfrak{M}}_1, s)} && {\pi_1(\widetilde{\mathfrak{M}}, s)} \\
	{\pi_1(\widetilde{\mathfrak{M}}_1/S_m, s)} && {\pi_1(\widetilde{\mathfrak{M}}/S_m, s)} \\
	{S_m} && {S_m}
	\arrow["{i_*}", from=1-1, to=1-3]
	\arrow[from=1-1, to=2-1]
	\arrow[from=1-3, to=2-3]
	\arrow["{i_*}", from=2-1, to=2-3]
	\arrow[from=2-1, to=3-1]
	\arrow[from=2-3, to=3-3]
	\arrow["{\operatorname{id}}", from=3-1, to=3-3]
\end{tikzcd}
\end{equation}
Since the double cover of $\mathbb{P}^n_{\mathbb{K}}$ is independent of the ordering of the branching hyperplanes, the family 
$\widetilde{\mathcal{X}} \xrightarrow{\widetilde{f}} \widetilde{\mathfrak{M}}$ is in fact the pullback of 
$\bar{f}\colon \widetilde{\mathcal{X}}/S_m \to \widetilde{\mathfrak{M}}/S_m$. 
Likewise, the hyperelliptic family $\widetilde{\mathcal{C}}\to \widetilde{\mathfrak{M}}_1$ is the pullback of 
$\widetilde{\mathcal{C}}/S_m \to \widetilde{\mathfrak{M}}_1/S_m$.

Let $C$ be the fiber of $\widetilde{\mathcal{C}}/S_m\to \widetilde{\mathfrak{M}}_1/S_m$ over the base point $s$, and let $X_C$ be the fiber of 
$\widetilde{\mathcal{X}}/S_m \xrightarrow{\bar{f}} \widetilde{\mathfrak{M}}/S_m$ at $s$.
The local system $R^n\bar{f}_*\mathbb{F}_\ell$ on $\widetilde{\mathfrak{M}}/S_m$ therefore corresponds to the monodromy representation
\[
\rho\colon \pi_1(\widetilde{\mathfrak{M}}/S_m,s)\longrightarrow 
GL\big(H^n(X_C,\mathbb{F}_\ell)_{(1)}\big).
\]
Through the commutative diagram \eqref{eq:diagram of fund groups}, we denote all induced monodromy representations
\[
\pi_1(\widetilde{\mathfrak{M}}_1, s)\,,\quad 
\pi_1(\widetilde{\mathfrak{M}}_1/S_m, s)\,,\quad
\pi_1(\widetilde{\mathfrak{M}}, s)
\ \longrightarrow\ 
GL\big(H^n(X_C,\mathbb{F}_\ell)_{(1)}\big)
\]
by the same symbol~$\rho$.
 Via the identification $H^n(X_C, \mathbb{F}_\ell)_{(1)}=\wedge^n H^1(C, \mathbb{F}_\ell)$, we define the subgroup $\wedge^n \operatorname{Sp}(H^1(C, \mathbb{F}_\ell))$ of $GL\big(H^n(X_C,\mathbb{F}_\ell)_{(1)}\big)$ as the subgroup consisting of those $\varphi\in GL\big(H^n(X_C,\mathbb{F}_\ell)_{(1)}\big)$ such that $\exists\, g\in \operatorname{Sp}(H^1(C, \mathbb{F}_\ell))$ with
\(
\varphi(v_1\wedge\dots\wedge v_n)=(gv_1)\wedge\dots \wedge (gv_n), \quad \forall\, v_i\in H^1(C, \mathbb{F}_\ell)\).


\begin{proposition}\label{prop:relation 1 for the monodromies of X and C }
As subgroups of $GL\big(H^n(X_C,\mathbb{F}_\ell)_{(1)}\big)$, we have
\[
\rho\bigl(\pi_1(\widetilde{\mathfrak{M}}_1, s)\bigr)
   =\rho\bigl(\pi_1(\widetilde{\mathfrak{M}}_1/S_m, s)\bigr)=\wedge^n \operatorname{Sp}(H^1(C, \mathbb{F}_\ell)),
\]
and
\[
\rho\bigl(\pi_1(\widetilde{\mathfrak{M}}, s)\bigr)
   =\rho\bigl(\pi_1(\widetilde{\mathfrak{M}}/S_m, s)\bigr).
\]
\end{proposition}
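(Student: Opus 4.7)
The plan is to establish the proposition in two stages. First, prove the chain of equalities
\[
\rho(\pi_1(\widetilde{\mathfrak{M}}_1, s)) = \rho(\pi_1(\widetilde{\mathfrak{M}}_1/S_m, s)) = \wedge^n \operatorname{Sp}(H^1(C, \mathbb{F}_\ell))
\]
by combining \Cref{prop:H1C and HnX local system version} with Yu's theorem (\Cref{thm:Yu thm}). Second, upgrade this to $\rho(\pi_1(\widetilde{\mathfrak{M}}, s)) = \rho(\pi_1(\widetilde{\mathfrak{M}}/S_m, s))$ via a short diagram chase in \eqref{eq:diagram of fund groups}.

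For the first stage, \Cref{prop:H1C and HnX local system version} identifies the restriction of $(R^n \bar{f}_* \mathbb{F}_\ell)_{(1)}$ to $\widetilde{\mathfrak{M}}_1/S_m$ with $\wedge^n R^1 f_{1*} \mathbb{F}_\ell$, so the restriction of $\rho$ to $\pi_1(\widetilde{\mathfrak{M}}_1, s)$ factors as
\[
\pi_1(\widetilde{\mathfrak{M}}_1, s) \xrightarrow{\rho_1} \operatorname{Sp}(H^1(C,\mathbb{F}_\ell)) \xrightarrow{\wedge^n} GL(\wedge^n H^1(C, \mathbb{F}_\ell)),
\]
where $\rho_1$ is the mod-$\ell$ monodromy of the hyperelliptic family $f_1$; likewise the representation on $\pi_1(\widetilde{\mathfrak{M}}_1/S_m, s)$ factors through $\operatorname{Sp}(H^1(C,\mathbb{F}_\ell))$. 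By \Cref{thm:Yu thm}, $\rho_1$ is surjective, so $\rho(\pi_1(\widetilde{\mathfrak{M}}_1, s)) = \wedge^n \operatorname{Sp}(H^1(C,\mathbb{F}_\ell))$, and the sandwich
\[
\wedge^n \operatorname{Sp} \;=\; \rho(\pi_1(\widetilde{\mathfrak{M}}_1, s)) \;\subseteq\; \rho(\pi_1(\widetilde{\mathfrak{M}}_1/S_m, s)) \;\subseteq\; \wedge^n \operatorname{Sp}
\]
yields the first stage.

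For the second stage, only the inclusion $\rho(\pi_1(\widetilde{\mathfrak{M}}/S_m, s)) \subseteq \rho(\pi_1(\widetilde{\mathfrak{M}}, s))$ is nontrivial. Given $g \in \pi_1(\widetilde{\mathfrak{M}}/S_m, s)$ with image $\sigma \in S_m$, surjectivity of the left column in \eqref{eq:diagram of fund groups} produces a lift $\alpha \in \pi_1(\widetilde{\mathfrak{M}}_1/S_m, s)$ of $\sigma$, and exactness of the right column then yields $g = i_*(\alpha) \cdot h$ for some $h \in \pi_1(\widetilde{\mathfrak{M}}, s)$. By the first stage, $\rho(\alpha) = \rho(\beta)$ for some $\beta \in \pi_1(\widetilde{\mathfrak{M}}_1, s)$; commutativity of \eqref{eq:diagram of fund groups} then gives $\rho(i_*(\alpha)) = \rho(\alpha) = \rho(\beta) = \rho(i_*(\beta)) \in \rho(\pi_1(\widetilde{\mathfrak{M}}, s))$, whence $\rho(g) = \rho(i_*(\alpha)) \cdot \rho(h) \in \rho(\pi_1(\widetilde{\mathfrak{M}}, s))$.

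The main delicate point is the invocation of \Cref{thm:Yu thm} for the \emph{ordered} hyperelliptic family over $\widetilde{\mathfrak{M}}_1$, rather than only for its unordered quotient. Should only the unordered version be available as input, surjectivity of $\rho_1$ can be recovered by a Picard--Lefschetz argument mirroring Section~3: the image contains transvections along vanishing cycles $\delta_{ij}$ arising from collisions $p_i \to p_j$, these cycles span $H^1(C, \mathbb{F}_\ell)$, and classical results on groups generated by symplectic transvections (for odd $\ell \ge 5$) imply they generate $\operatorname{Sp}(H^1(C, \mathbb{F}_\ell))$.
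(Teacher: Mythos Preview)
Your proof is correct and follows essentially the same approach as the paper. The paper invokes \cite{10.1215/S0012-7094-08-14115-8} (Hall's reproof) directly for the surjectivity of $\pi_1(\widetilde{\mathfrak{M}}_1,s)\to\operatorname{Sp}(H^1(C,\mathbb{F}_\ell))$ on the ordered moduli space, so your caveat about ordered versus unordered is unnecessary; your diagram chase for the second equality is simply a spelled-out version of the paper's one-sentence appeal to the two short exact sequences in \eqref{eq:diagram of fund groups}.
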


\begin{proof}
 By \cite{10.1215/S0012-7094-08-14115-8}, the monodromy representation $\pi_1(\widetilde{\mathfrak{M}}_1, s) \rightarrow \operatorname{Sp}(H^1(C, \mathbb{F}_\ell))$ is surjective. Via the identification $H^n(X_C, \mathbb{F}_\ell)_{(1)}=\wedge^n H^1(C, \mathbb{F}_\ell)$, we deduce from \Cref{prop:H1C and HnX local system version} that both $\rho (\pi_1(\widetilde{\mathfrak{M}}_1, s))$ and $\rho (\pi_1(\widetilde{\mathfrak{M}}_1/S_m, s))$ are equal to the subgroup $\wedge^n \operatorname{Sp}(H^1(C, \mathbb{F}_\ell))$ of $GL\big(H^n(X_C,\mathbb{F}_\ell)_{(1)}\big)$. 
Since both columns of \eqref{eq:diagram of fund groups} are short exact sequences, we obtain $\rho (\pi_1(\widetilde{\mathfrak{M}}, s))=\rho (\pi_1(\widetilde{\mathfrak{M}}/S_m, s))$ from the equality $\rho (\pi_1(\widetilde{\mathfrak{M}}_1, s))=\rho (\pi_1(\widetilde{\mathfrak{M}}_1/S_m, s))$.
\end{proof}

\section{Main results}
We first determine the image of the monodromy representation
\[
\rho \colon \pi_1(\widetilde{\mathfrak{M}}, s) \longrightarrow GL\big(H^n(X, \mathbb{F}_\ell)_{(1)}\big).
\]
Recall the set of vanishing cycles:
\[
S=\{\;\gamma\cdot\delta_{j_0\dots j_n}\mid \gamma\in\pi_1(\widetilde{\mathfrak{M}},s),\ 1\le j_0<\dots<j_n\le m\;\}.
\]

\begin{proposition}\label{prop:S consists of a single orbit}
$S$ consists of a single $\pi_1(\widetilde{\mathfrak{M}}, s)$-orbit. In other words, the monodromy group $\rho(\pi_1(\widetilde{\mathfrak{M}}, s))$ acts transitively on $S$.
\end{proposition}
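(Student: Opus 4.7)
The plan is to leverage the free $S_m$-action on $\widetilde{\mathfrak{M}}$, which permutes hyperplanes and hence permutes the irreducible components of $D_0$ according to $\sigma\cdot D_{j_0\dots j_n}=D_{\sigma(j_0)\dots\sigma(j_n)}$. Since $S_m$ already acts transitively on $(n+1)$-subsets of $\{1,\dots,m\}$, my task is to promote this to a transitive action of $\rho(\pi_1(\widetilde{\mathfrak{M}},s))$ on the chosen vanishing cycles $\delta_{j_0\dots j_n}$.

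Using the short exact sequence
\[
1\longrightarrow\pi_1(\widetilde{\mathfrak{M}},s)\longrightarrow\pi_1(\widetilde{\mathfrak{M}}/S_m,s)\longrightarrow S_m\longrightarrow 1
\]
coming from the freeness of the action, I lift a given $\sigma\in S_m$ with $\sigma(J)=J'$ (where $J=\{j_0,\dots,j_n\}$ and $J'=\{j_0',\dots,j_n'\}$) to an element $\tilde\sigma\in\pi_1(\widetilde{\mathfrak{M}}/S_m,s)$. Because $\widetilde{\mathcal{X}}\to\widetilde{\mathfrak{M}}$ is the pullback of $\widetilde{\mathcal{X}}/S_m\to\widetilde{\mathfrak{M}}/S_m$, the representation $\rho$ naturally extends to $\pi_1(\widetilde{\mathfrak{M}}/S_m,s)$, so $\tilde\sigma$ acts on $H^n(X,\mathbb{F}_\ell)_{(1)}$. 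The key observation is that the conjugate $\tilde\sigma\gamma_J\tilde\sigma^{-1}$, which lies in $\pi_1(\widetilde{\mathfrak{M}},s)$ by normality, is again a lasso, this time around $\sigma\cdot D_J=D_{J'}$; indeed, the automorphism of $\widetilde{\mathfrak{M}}$ induced by $\sigma$ carries a regular curve meeting $D_J$ transversally at a smooth point to one meeting $D_{J'}$ transversally. The Picard--Lefschetz formula (\Cref{prop:Picard--Lefschetz formula for X}) then yields
\[
T_{\rho(\tilde\sigma)(\delta_J)}\;=\;\rho(\tilde\sigma)\,T_{\delta_J}\,\rho(\tilde\sigma)^{-1}\;=\;\rho(\tilde\sigma\gamma_J\tilde\sigma^{-1})\;=\;T_{\delta'}
\]
for some vanishing cycle $\delta'$ at $D_{J'}$. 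Since any two lassoes around $D_{J'}$ differ by conjugation in $\pi_1^D$ and the Picard--Lefschetz identity determines the vanishing cycle only up to sign, one has $\delta'=\pm\gamma\cdot\delta_{J'}$ for some $\gamma\in\pi_1(\widetilde{\mathfrak{M}},s)$, whence $\rho(\tilde\sigma)(\delta_J)=\pm\gamma\cdot\delta_{J'}$.

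To descend from $\pi_1(\widetilde{\mathfrak{M}}/S_m,s)$ back to $\pi_1(\widetilde{\mathfrak{M}},s)$, I invoke \Cref{prop:relation 1 for the monodromies of X and C }, which gives $\rho(\pi_1(\widetilde{\mathfrak{M}},s))=\rho(\pi_1(\widetilde{\mathfrak{M}}/S_m,s))$; hence there exists $\gamma'\in\pi_1(\widetilde{\mathfrak{M}},s)$ with $\rho(\gamma')=\rho(\tilde\sigma)$, and therefore $\gamma'\cdot\delta_J=\pm\gamma\cdot\delta_{J'}$. The sign is absorbed using \Cref{prop:involution induces -1 action}, which guarantees that $-\operatorname{id}\in\rho(\pi_1(\widetilde{\mathfrak{M}},s))$. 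This exhibits $\delta_J$ and $\delta_{J'}$ in the same $\pi_1(\widetilde{\mathfrak{M}},s)$-orbit, completing the proof.

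The main obstacle I anticipate is making rigorous the geometric claim that $\tilde\sigma\gamma_J\tilde\sigma^{-1}$ is a lasso around $D_{J'}$. The cleanest route is to lift $\tilde\sigma$ to an actual path in $\widetilde{\mathfrak{M}}$ from $s$ to $\sigma(s)$, observe that the automorphism $\sigma\colon \widetilde{\mathfrak{M}}\to\widetilde{\mathfrak{M}}$ intertwines $\gamma_J\in\pi_1(\widetilde{\mathfrak{M}},s)$ with a lasso around $D_{\sigma(J)}$ based at $\sigma(s)$, and then transport back to $s$ along the chosen path; the resulting element agrees with $\tilde\sigma\gamma_J\tilde\sigma^{-1}$ in $\pi_1(\widetilde{\mathfrak{M}},s)$, and by construction is a lasso around $D_{J'}$.
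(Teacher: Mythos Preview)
Your proposal is correct and follows essentially the same route as the paper's own proof: both exploit that the $S_m$-action identifies all components $D_{j_0\dots j_n}$ in $\widetilde{\mathfrak{M}}/S_m$, so their lassos become conjugate in $\pi_1(\widetilde{\mathfrak{M}}/S_m,s)$; then both invoke \Cref{prop:relation 1 for the monodromies of X and C } to descend the conjugacy to $\rho(\pi_1(\widetilde{\mathfrak{M}},s))$, apply the Picard--Lefschetz formula to get $\delta_J=\pm\gamma\cdot\delta_{J'}$, and absorb the sign via \Cref{prop:involution induces -1 action}. The only difference is that where you unpack the conjugacy of lassos by hand (lifting $\sigma$ to a path and transporting), the paper simply cites \cite[Prop.~A~I.16]{freitag1987etale} for the statement that lassos around the same irreducible boundary component are conjugate---so your anticipated ``main obstacle'' is handled in the paper by a reference rather than a geometric argument.
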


\begin{proof}
 Take two lassos $\gamma_{j_0\dots j_n}$ and $\gamma_{i_0\dots i_n}$. We first show that $\rho(\gamma_{j_0\dots j_n})$ and $\rho(\gamma_{i_0\dots i_n})$ are conjugate in $\rho(\pi_1(\widetilde{\mathfrak{M}},s))$. Since the irreducible components $D_{j_0\dots j_n}$ and $D_{i_0\dots i_n}$ coincide in $\widetilde{\mathfrak{M}}/S_m$, it follows from \cite[Prop.~A~I.16]{freitag1987etale} that $\gamma_{j_0\dots j_n}$ and $\gamma_{i_0\dots i_n}$ are conjugate in $\pi_1(\widetilde{\mathfrak{M}}/S_m,s)$. Hence $\rho(\gamma_{j_0\dots j_n})$ and $\rho(\gamma_{i_0\dots i_n})$ are conjugate in $\rho(\pi_1(\widetilde{\mathfrak{M}}/S_m,s))=\rho(\pi_1(\widetilde{\mathfrak{M}},s))$ (by \Cref{prop:relation 1 for the monodromies of X and C }). Choose $\gamma\in\pi_1(\widetilde{\mathfrak{M}},s)$ such that $\rho(\gamma_{j_0\dots j_n})=\rho(\gamma)\rho(\gamma_{i_0\dots i_n})\rho(\gamma)^{-1}$. By the Picard--Lefschetz formula \eqref{equ:Picard--Lefschetz formula}, this implies $\delta_{j_0\dots j_n}=\pm\,\gamma\cdot\delta_{i_0\dots i_n}$. Since $-\mathrm{id}\in \rho(\pi_1(\widetilde{\mathfrak{M}},s))$ (by \Cref{prop:involution induces -1 action}), we conclude that $\delta_{j_0\dots j_n}$ and $\delta_{i_0\dots i_n}$ lie in the same $\pi_1(\widetilde{\mathfrak{M}},s)$-orbit.
\end{proof}

Combining the preceding results, we obtain the following irreducibility property.

\begin{proposition}\label{prop:irreducible action}
The monodromy representation 
\[\rho \colon \pi_1(\widetilde{\mathfrak{M}}, s)\to GL(H^n(X, \mathbb{F}_\ell)_{(1)})\]
 is irreducible.
\end{proposition}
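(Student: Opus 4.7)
The plan is to leverage the reflection/transvection structure of $\rho(\pi_1(\widetilde{\mathfrak{M}},s))$ supplied by the Picard--Lefschetz formula together with the transitivity of the monodromy action on the set $S$ of vanishing cycles. Concretely, I would let $W\subseteq H^n(X,\mathbb{F}_\ell)_{(1)}$ be an arbitrary nonzero $\rho$-stable subspace and show that $W$ equals the whole space.

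The first step is a dichotomy on vanishing cycles. For each $\delta\in S$, the Picard--Lefschetz transformation $T_\delta$ lies in $\rho(\pi_1(\widetilde{\mathfrak{M}},s))$ by \Cref{prop:vanishing cycles span the whole space of -1}, and hence preserves $W$. The explicit formula \eqref{equ:Picard--Lefschetz formula} yields $T_\delta(a)-a=c(a,\delta)\,\delta$ for some constant $c\in\mathbb{F}_\ell^{\ast}$ (here $\ell$ being odd and $m\geq n+3$ are used so that $\pm (-1)^{\lfloor n/2\rfloor}2^{m-n-1}$ is nonzero in $\mathbb{F}_\ell$). Since $T_\delta(a)-a\in W$ for every $a\in W$, we conclude that for each $\delta\in S$, either $\delta\in W$, or $(a,\delta)=0$ for all $a\in W$, i.e., $\delta\in W^{\perp}$.

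The second step is to combine this dichotomy with transitivity. Set $S_1:=S\cap W$ and $S_2:=S\cap W^{\perp}$, so that $S=S_1\cup S_2$ by the previous paragraph. Both subsets are $\rho(\pi_1(\widetilde{\mathfrak{M}},s))$-stable: $S_1$ because $W$ is $\rho$-stable, and $S_2$ because the pairing $Q$ is monodromy-invariant, so $W^{\perp}$ is $\rho$-stable. Now \Cref{prop:S consists of a single orbit} asserts that the monodromy group acts transitively on $S$, and therefore $S_1$ (being invariant and a subset of a single orbit) must be either empty or all of $S$.

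The final step is to read off the two cases. If $S_1=S$, then $W$ contains $\operatorname{span}_{\mathbb{F}_\ell}(S)$, which equals $H^n(X,\mathbb{F}_\ell)_{(1)}$ by \Cref{prop:vanishing cycles span the whole space of -1}, so $W$ is the whole space. If instead $S_1=\emptyset$, then $S_2=S$, so $W^{\perp}$ contains $\operatorname{span}(S)=H^n(X,\mathbb{F}_\ell)_{(1)}$; non-degeneracy of $Q$ then forces $W=0$, contradicting $W\neq 0$. Hence the only nonzero $\rho$-stable subspace is the entire space, proving irreducibility. I do not anticipate a genuine obstacle here: the substantive ingredients (single-orbit transitivity of $\pi_1$ on $S$, and the fact that $S$ spans $H^n(X,\mathbb{F}_\ell)_{(1)}$) are already established, and what remains is the clean reflection-group argument just sketched.
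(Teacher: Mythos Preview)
Your proposal is correct and essentially the same as the paper's proof: both rely on the Picard--Lefschetz transvections/reflections, the transitivity of the monodromy action on $S$ (\Cref{prop:S consists of a single orbit}), and the fact that $S$ spans $H^n(X,\mathbb{F}_\ell)_{(1)}$ (\Cref{prop:vanishing cycles span the whole space of -1}). The only cosmetic difference is that the paper directly exhibits a $\delta\in S\cap W$ (by choosing $0\neq a\in W$ and using non-degeneracy of $Q$ together with the spanning property to find $\delta$ with $(a,\delta)\neq 0$), whereas you phrase it as a dichotomy $S=S_1\cup S_2$ followed by a case split---your second case $S_1=\emptyset$ is ruled out in the paper's argument before it ever arises.
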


\begin{proof}
 Suppose $W\subset H^n(X, \mathbb{F}_\ell)_{(1)}$ is a nonzero $\pi_1(\widetilde{\mathfrak{M}}, s)$-invariant subspace. Take $0\neq a\in W$. Since the pairing $(\cdot, \cdot)$ is non-degenerate and $S$ linearly spans $H^n(X, \mathbb{F}_\ell)_{(1)}$ (\Cref{prop:vanishing cycles span the whole space of -1}), there exists $\delta\in S$ with $(a,\delta)\neq 0$. Again by \Cref{prop:vanishing cycles span the whole space of -1}, 
the image $\rho(\pi_1(\widetilde{\mathfrak{M}},s))$ contains the 
transvection $T_\delta$. Hence both $T_\delta(a)$ and $a$ lie in $W$, 
which implies $\delta\in W$. Therefore 
$S=\pi_1(\widetilde{\mathfrak{M}},s)\cdot \delta\subset W$, and we 
conclude that $W=H^n(X, \mathbb{F}_\ell)_{(1)}$.
\end{proof}

Recall if $n$ is even, we have the spinor norm homomorphism
\[
\theta\colon \operatorname{O}\big(H^n(X,\mathbb{F}_\ell)_{(1)}\big)\longrightarrow 
\mathbb{F}_\ell^{\!*}/(\mathbb{F}_\ell^{\!*})^2=\{\pm1\},
\]
as well as the product of the spinor norm with the determinant,
\[
\theta\cdot\det\colon 
\operatorname{O}\big(H^n(X,\mathbb{F}_\ell)_{(1)}\big)\longrightarrow \{\pm1\}.
\]
\begin{theorem}\label{thm:monodromy for m tilde}
	Let 
\[
\widetilde{\Gamma} \coloneqq \operatorname{Im}\rho\!\left(\pi_1(\widetilde{\mathfrak{M}}, s)\right)
\]
denote the $\operatorname{mod}$-$\ell$ monodromy group.
\begin{compactenum}[\normalfont(1)]
    \item If $n$ is odd, then
    \[
    \widetilde{\Gamma} = \operatorname{Sp}\bigl(H^n(X,\mathbb{F}_\ell)_{(1)}\bigr)=\operatorname{Sp}\bigl(H^n(X,\mathbb{F}_\ell)\bigr).
    \]
    \item Suppose $n$ is even and $\ell \ge 5$. 
    Then $\widetilde{\Gamma} = \ker \theta$ if one of the following conditions holds; 
    otherwise $\widetilde{\Gamma} = \ker (\theta \cdot \det)$.
    \begin{compactenum}[\normalfont(i)]
        \item $n \equiv 0 \pmod{4}$;
        \item $n \equiv 2 \pmod{4}$ and $\ell \equiv 1 \pmod{4}$.
    \end{compactenum}
\end{compactenum}
	\end{theorem}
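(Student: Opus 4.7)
The plan is to prove the two inclusions $\widetilde{\Gamma}\subseteq G$ and $G\subseteq\widetilde{\Gamma}$, where $G$ denotes the target subgroup stated in the theorem.

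\emph{Upper bound.} By \Cref{prop:vanishing cycles span the whole space of -1}, $\widetilde{\Gamma}$ is generated by the Picard--Lefschetz operators $T_\delta$ with $\delta\in S$. When $n$ is odd each $T_\delta$ is a symplectic transvection, so $\widetilde{\Gamma}\subseteq\operatorname{Sp}(H^n(X,\mathbb{F}_\ell)_{(1)})$. When $n$ is even, $T_\delta$ is an orthogonal reflection with $\det(T_\delta)=-1$ and spinor norm equal to the class of $(\delta,\delta)=\tfrac{(-1)^{n/2}}{2^{m-n-2}}$ in $\mathbb{F}_\ell^{\ast}/(\mathbb{F}_\ell^{\ast})^2$ (using \Cref{prop:Picard--Lefschetz formula for X}). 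Since $m-n-2$ is even, $2^{m-n-2}$ is a square, so this class reduces to $(-1)^{n/2}$, which is trivial in $\mathbb{F}_\ell^{\ast}/(\mathbb{F}_\ell^{\ast})^2$ precisely in cases (i) and (ii). In those cases $T_\delta\in\ker\theta$, and otherwise $\theta(T_\delta)=-1=\det(T_\delta)$, so $T_\delta\in\ker(\theta\cdot\det)$. This yields the stated upper bounds.

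\emph{Lower bound via the hyperelliptic locus.} Since monodromy groups at different base points are conjugate, I may choose $s$ inside the hyperelliptic locus $\widetilde{\mathfrak{M}}_1\subseteq\widetilde{\mathfrak{M}}$. By \Cref{prop:relation 1 for the monodromies of X and C }, $\widetilde{\Gamma}$ then contains the subgroup $\wedge^n\operatorname{Sp}(H^1(C,\mathbb{F}_\ell))$ acting on $\wedge^n H^1(C,\mathbb{F}_\ell)\simeq H^n(X,\mathbb{F}_\ell)_{(1)}$, where $C$ has genus $g=(m-2)/2$. Combining three facts, namely (1) the irreducibility of $\widetilde{\Gamma}$ proved in \Cref{prop:irreducible action}, (2) its generation by transvections (for $n$ odd) or reflections sharing a common norm class (for $n$ even), and (3) its containment of the large subgroup $\wedge^n\operatorname{Sp}(2g,\mathbb{F}_\ell)$, I then invoke a classification theorem for irreducible subgroups of $\operatorname{Sp}$ or $\operatorname{O}$ over $\mathbb{F}_\ell$ generated by long root elements, in the spirit of Kantor's work on transvection subgroups of classical groups (and Wagner / Zalesskii--Serezhkin for reflection subgroups of orthogonal groups). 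The theorem lists a short set of candidates for $\widetilde{\Gamma}$; since we work over the prime field $\mathbb{F}_\ell$, there are no subfield candidates, and all remaining exceptional possibilities are ruled out by an order comparison against $\wedge^n\operatorname{Sp}(2g,\mathbb{F}_\ell)$, leaving only the target $G$.

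\emph{Main obstacle.} The hardest step is the classification-based elimination of exceptional subgroups in the lower bound. The hypothesis $\ell\geq 5$ in the even case appears to be needed precisely here, since the orthogonal classification admits sporadic small exceptions (typically $\operatorname{PSL}_2$-type subgroups) in characteristic $\ell=3$ that cannot be excluded by order alone. A further delicate point in the even case is to correctly distinguish between $\ker\theta$, $\ker\det$, and $\ker(\theta\cdot\det)$: this relies on the explicit spinor-norm computation above, together with the observation that $\wedge^n\operatorname{Sp}(H^1(C,\mathbb{F}_\ell))$ is automatically contained in $\ker\theta\cap\ker\det$ because $\operatorname{Sp}(2g,\mathbb{F}_\ell)$ is perfect for $g\ge 2$.
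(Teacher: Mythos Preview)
Your upper bound matches the paper exactly, including the spinor-norm computation for the reflections $T_\delta$. For the lower bound you and the paper both use the hyperelliptic locus, but in different ways. Rather than feeding the full inclusion $\wedge^n\operatorname{Sp}(2g,\mathbb{F}_\ell)\subset\widetilde{\Gamma}$ into a Kantor/Wagner/Zalesskii--Serezhkin classification and then eliminating the exceptional candidates by order, the paper extracts from the hyperelliptic subgroup a single \emph{isotropic shear}: if $\varphi\in\operatorname{Sp}(H^1(C,\mathbb{F}_\ell))$ is a transvection, then $\wedge^n\varphi\in\widetilde{\Gamma}$ is nontrivial and satisfies $(\wedge^n\varphi-1)^2=0$. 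This is precisely the extra hypothesis in Hall's theorem \cite[Thm.~3.1]{10.1215/S0012-7094-08-14115-8} (irreducible, generated by transvections or by reflections of a common norm class, and containing an isotropic shear $\Rightarrow$ contains $\Omega(V)$), so the conclusion follows immediately. In the odd case a transvection is already an isotropic shear, so the hyperelliptic input is not needed at all there; irreducibility plus generation by transvections suffices.

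Your route should also work in principle, but the ``order comparison'' you flag as the main obstacle is not carried out and is less innocent than it sounds: the symmetric-group candidates $S_N$ in the reflection classification have $N\approx\dim V=\binom{m-2}{n}$, and $|S_N|$ can easily exceed $|\operatorname{Sp}(m-2,\mathbb{F}_\ell)|$, so a crude size bound fails and one would need a finer argument (e.g.\ via minimal faithful permutation degrees of $\operatorname{PSp}(2g,\mathbb{F}_\ell)$). The isotropic-shear construction bypasses that case analysis entirely, which is its main advantage.
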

\begin{proof}
 Let $V = H^n(X, \mathbb{F}_\ell)_{(1)}$. 
By \Cref{prop:irreducible action}, the group $\widetilde{\Gamma}$ acts irreducibly on $V$; 
hence $\widetilde{\Gamma}$ is an irreducible subgroup of $\operatorname{Sp}(V)$ or 
$\operatorname{O}(V)$, depending on whether $n$ is odd or even. 
Furthermore, by \Cref{prop:vanishing cycles span the whole space of -1}, 
the group $\widetilde{\Gamma}$ is generated by the Picard--Lefschetz transformations 
$T_\delta$ for $\delta \in S$.

 If \(n\) is odd, then \(T_\delta\) 
is a transvection for every \(\delta \in S\). 
By \cite[Thm.~3.1]{10.1215/S0012-7094-08-14115-8}, these transvections 
generate the full symplectic group. Hence in this case
\(
\widetilde{\Gamma} = \operatorname{Sp}\bigl(H^n(X,\mathbb{F}_\ell)_{(1)}\bigr)\).

 Now suppose $n$ is even and $\ell\ge 5$. To apply \cite[Thm.~3.1]{10.1215/S0012-7094-08-14115-8}, we construct an isotropic shear in $\widetilde{\Gamma}$. Using \Cref{prop:H1C and HnX local system version}, write $V=\wedge^n W$, $W=H^1(C,\mathbb{F}_\ell)$, and by \Cref{prop:relation 1 for the monodromies of X and C }, $\wedge^n \operatorname{Sp}(W)\subset \widetilde{\Gamma}$. For any nonzero $\delta\in W$, let $\varphi\in \operatorname{Sp}(W)$ be the transvection $\varphi(a)=a-(a,\delta)\delta$, and define  a linear map $\wedge^n \varphi$ on $\wedge^n W$ by
\[
\wedge^n \varphi : \wedge^n W \longrightarrow \wedge^n W, \quad w_1\wedge \dots \wedge w_n \longmapsto \varphi(w_1)\wedge \dots \wedge \varphi(w_n), \quad w_i \in W.
\]
 Then $\wedge^n \varphi\in \widetilde{\Gamma}$ is nontrivial, satisfies $(\wedge^n \varphi-1)^2=0$, and is an isotropic shear.
Since $n$ is even, each $T_\delta$ ($\delta\in S$) is a reflection with $\det T_\delta=-1$. As $S$ forms a single $\widetilde{\Gamma}$-orbit (\Cref{prop:S consists of a single orbit}), we have $(\delta,\delta)=(-1)^{n/2}/2^{m-n-2}$ for all $\delta\in S$ (\Cref{prop:Picard--Lefschetz formula for X}). Since $m-n-2$ is even, we have $T_\delta \in \ker \theta$ if and only if 
$(-1)^{n/2} \in (\mathbb{F}_\ell^{*})^2$, which occurs precisely in cases~(i) or~(ii). 
Otherwise $T_\delta \in \ker(\theta \cdot \det)$. 
The conclusion then follows from \cite[Thm.~3.1]{10.1215/S0012-7094-08-14115-8}.
\end{proof}

Recall in \Cref{subsection:double_cover} that the coarse moduli space 
$\mathfrak{M}$ of ordered $m$-tuples of hyperplane arrangements in 
$\mathbb{P}_{\mathbb{K}}^{n}$ in general position is realized as an open 
subvariety of the affine space $\mathbb{A}_{\mathbb{K}}^{\,n(m-n-2)}$. 
For any $(a_{ij}) \in \mathfrak{M}$, the corresponding matrix $(b_{ij})$ 
in \eqref{eq:matrix of bij} defines a closed embedding 
\(
i \colon  \mathfrak{M} \hookrightarrow \widetilde{\mathfrak{M}}\),
and the universal family 
$f \colon  \mathcal{X} \to \mathfrak{M}$ is obtained by restricting 
$\tilde{f} \colon  \widetilde{\mathcal{X}} \to \widetilde{\mathfrak{M}}$. 
Thus the monodromy representation of the local system 
$R^{n} f_{*}\mathbb{F}_{\ell}$ is the composition
\[
\pi_{1}(\mathfrak{M}, s) \xrightarrow{i_{*}} 
\pi_{1}(\widetilde{\mathfrak{M}}, s) \xrightarrow{\rho}
GL\bigl(H^{n}(X, \mathbb{F}_{\ell})_{(1)}\bigr).
\]
Hence, \emph{a priori} the monodromy group 
$\Gamma \coloneqq \operatorname{Im}(\rho \circ i_{*})$ associated with the 
local system $R^{n} f_{*}\mathbb{F}_{\ell}$ is a subgroup of 
$\widetilde{\Gamma} = \operatorname{Im}\rho$. We will ultimately prove that $\Gamma = \widetilde{\Gamma}$. 
As a first step, we establish the following weaker comparison result.
\begin{proposition}\label{prop:gamma pm 1 equal to tilde gamma}
 $\widetilde{\Gamma}=\Gamma \cdot \{\pm1\}$. 
\end{proposition}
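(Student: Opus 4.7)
The plan is to realize $\widetilde{\mathfrak{M}}$ as a torsor over $\mathfrak{M}$ under a natural algebraic group, and then use the resulting splitting of fundamental groups to reduce the statement to analyzing the monodromy of loops in a single fiber of this torsor. The matrix realization \eqref{eq:matrix of bij} exhibits $i\colon \mathfrak{M}\hookrightarrow\widetilde{\mathfrak{M}}$ as a section of a natural retraction $q\colon \widetilde{\mathfrak{M}}\to\mathfrak{M}$ which normalizes any matrix to canonical form by left multiplication by an element of $GL_{n+1}$ and column scaling by an element of $\mathbb{G}_m^m$. Since $q\circ i=\mathrm{id}$, the induced map $i_*$ splits $q_*$ on fundamental groups, so every $\widetilde{\gamma}\in\pi_1(\widetilde{\mathfrak{M}},s)$ factors as $\widetilde{\gamma}=\kappa\cdot i_*(\gamma)$ with $\kappa\in K:=\ker q_*$. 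Hence $\widetilde{\Gamma}=\rho(K)\cdot\Gamma$, and the task reduces to showing $\rho(K)\subseteq\{\pm 1\}$. The reverse inclusion is automatic: by \Cref{prop:involution induces -1 action}, the loop $\varphi_i$ satisfies $\rho(\varphi_{i*}\iota_i)=-\mathrm{id}$, and $\varphi_i$ lies in a fiber of $q$ because scaling a single column leaves the underlying hyperplane arrangement unchanged.

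To analyze $\rho(K)$, I would describe loops in $K$ via the action $(A,D)\cdot M=AMD$ of $GL_{n+1}\times\mathbb{G}_m^m$ on $\widetilde{\mathfrak{M}}$. Any such loop takes the form $\gamma(t)=A(t)\,s\,D(t)$ with $A(0)=I$, $D(0)=I$, and for generic $s$ the loop condition $\gamma(1)=s$ forces $A(1)=\lambda I$ and $D(1)=\lambda^{-1} I$ for some $\lambda\in\mathbb{K}^*$. The heart of the proof is an explicit trivialization of $\widetilde{\mathcal{X}}|_\gamma$: the substitution $\mathbf{x}_s=A(t)^{T}\mathbf{x}_t$, $x_s=x_t/\mu(t)$, where $\mu$ is a continuous branch of $\sqrt{\prod_j d_j(t)}$ with $\mu(0)=1$, gives a fiberwise isomorphism $X_{\gamma(t)}\xrightarrow{\sim}X_s$. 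Tracing around the loop yields $\mu(1)^2=\lambda^{-m}$, hence $\mu(1)=\pm\lambda^{-m/2}$, and the resulting automorphism of $X_s$ sends $[x:\mathbf{x}]\mapsto[\pm\lambda^{m/2}x:\lambda\mathbf{x}]=[\pm x:\mathbf{x}]$ after the weighted projective rescaling on $\mathbb{P}(\tfrac{m}{2},1,\dots,1)$. This is either the identity or the covering involution of $X_s$, so it acts as $\pm\mathrm{id}$ on the $(-1)$-eigenspace $H^n(X_s,\mathbb{F}_\ell)_{(1)}$, giving $\rho(K)\subseteq\{\pm 1\}$.

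The main obstacle will be carrying out the splitting argument rigorously in the \'etale setting, required when $\mathrm{char}\,\mathbb{K}>0$. Since $\rho$ already factors through the tame fundamental group $\pi_1^D(\widetilde{\mathfrak{M}},s)$ by \Cref{prop:unramified along infty divisor}, it is enough to argue with tame loops lying in fibers of $q$, and the trivialization above applies verbatim. A minor complementary check is that purely $GL_{n+1}$-loops (those with $D\equiv I$) contribute trivially: without column scaling, $\mu(t)$ remains constantly $1$ and the substitution $\mathbf{x}_s=A(t)^{T}\mathbf{x}_t$ alone trivializes $\widetilde{\mathcal{X}}|_\gamma$, so the monodromy equals $\mathrm{id}$. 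Combined with the $-\mathrm{id}$ contribution from column-scaling loops $\varphi_i$, this confirms $\rho(K)=\{\pm 1\}$ and hence $\widetilde{\Gamma}=\Gamma\cdot\{\pm 1\}$.
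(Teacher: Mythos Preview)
Your approach is essentially the same as the paper's: both use the product decomposition $\widetilde{\mathfrak{M}}\simeq\mathfrak{M}\times GL_{n+1}(\mathbb{K})\times(\mathbb{A}^1_{\mathbb{K}}\setminus\{0\})^{m-1}$ coming from the matrix normalization \eqref{eq:matrix of bij}, and both show that the contribution from the non-$\mathfrak{M}$ factors is exactly $\{\pm 1\}$. The difference is in execution. Where you argue with loops $\gamma(t)=A(t)\,s\,D(t)$ and a continuous branch $\mu(t)=\sqrt{\prod d_j(t)}$, the paper replaces this with the finite Galois cover
\[
\tilde{\pi}\colon (a,P,t_1,\dots,t_{m-1})\longmapsto (a,P,t_1^2,\dots,t_{m-1}^2),
\]
and checks that the pullback of $\tilde{f}$ along $\tilde{\pi}$ is isomorphic to the pullback of $f$ along the projection $p_1$ to $\mathfrak{M}$. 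This is the \'etale-rigorous version of your trivialization: passing to the squaring cover makes the square root $\mu$ globally defined as the regular function $t_1\cdots t_{m-1}$, so the substitution $x\mapsto x/\mu$ is an honest isomorphism of families rather than a path-dependent trivialization. Consequently $\rho(\operatorname{Im}\tilde{\pi}_*)\subset\Gamma$, and since coset representatives for $\pi_1(\widetilde{\mathfrak{M}},s)/\operatorname{Im}\tilde{\pi}_*$ are exactly the $\varphi_{i*}\iota_i$, which act by $-\mathrm{id}$ by \Cref{prop:involution induces -1 action}, the conclusion follows.

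Your acknowledged obstacle---making the loop-and-branch argument rigorous in positive characteristic---is precisely what the covering trick resolves, and it also dispatches your ``minor complementary check'' about $GL_{n+1}$-loops automatically, since the isomorphism $\tilde{\pi}^*\widetilde{\mathcal{X}}\simeq p_1^*\mathcal{X}$ is constant in the $GL_{n+1}$ direction as well. So your plan is correct, and the paper's proof is the clean way to carry it out.
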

\begin{proof}
 For a point 
\(
b=(b_{ij})_{\substack{0 \le i \le n \\ 1 \le j \le m}} \in \widetilde{\mathfrak{M}}\),
we represent it by the matrix
\[
B=\begin{pmatrix}
  b_{01} & b_{02} & \cdots & b_{0m} \\
  \vdots & \vdots &        & \vdots \\
  b_{n1} & b_{n2} & \cdots & b_{nm}
\end{pmatrix}.
\]
Associated to this point $b$, we obtain $m$ linear forms
\(
\ell_j \coloneqq \sum_{i=0}^{n} b_{ij} x_i\), \(1\le j\le m\).
Since the corresponding hyperplanes are in general position, there exists a unique linear change of the coordinates $(x_0,\dots, x_n)$ such that 
\[
\ell_j = \lambda_j x_{j-1} \ (1\le j\le n+1), 
\qquad\text{and}\qquad 
\ell_{n+2}=x_0+\cdots+x_n.
\]
Equivalently, there is a unique invertible matrix 
\(
P\in GL_{n+1}(\mathbb{K})
\)
such that $PB$ takes the form
\[
\begin{pmatrix}
  \lambda_1     &        &        & 1 & t_{0,n+3} & \cdots & t_{0,m} \\
                & \ddots &        & \vdots & \vdots &        & \vdots \\
                &        & \lambda_{n+1} & 1 & t_{n,n+3} & \cdots & t_{n,m}
\end{pmatrix}.
\]

After rescaling the columns, we obtain the matrix
\[
\begin{pmatrix}
1 & 0 & \cdots & 0 & 1 & 1 & \cdots & 1 \\ 
0 & 1 & \cdots & 0 & 1 & a_{11} & \cdots & a_{1,m-n-2} \\ 
\vdots & \vdots & \ddots & 0 & \vdots & \vdots & \ddots & \vdots \\ 
0 & 0 & \cdots & 1 & 1 & a_{n1} & \cdots & a_{n,m-n-2}
\end{pmatrix},
\]
which determines a point $a=(a_{ij}) \in \mathfrak{M}$.  

It is direct to see the assignment
\(
b \longmapsto (a,\, P,\, \lambda_1,\dots,\lambda_{n+1},\, t_{0,n+3},\dots,t_{0,m})
\)
defines an isomorphism
\[
\widetilde{\mathfrak{M}}
\;\xrightarrow{\;\sim\;}\;
\mathfrak{M} \times GL_{n+1}(\mathbb{K}) \times (\mathbb{A}^1_{\mathbb{K}} \setminus \{0\})^{\,m-1}.
\]
We identify $\widetilde{\mathfrak{M}}$ with 
$\mathfrak{M}\times GL_{n+1}(\mathbb{K})\times 
(\mathbb{A}^1_{\mathbb{K}}\setminus\{0\})^{m-1}$ via the isomorphism above, 
and we represent a closed point by $(a,P,t_1,\dots,t_{m-1})$, where 
$a\in\mathfrak{M}$, $P\in GL_{n+1}(\mathbb{K})$, and each $t_i\in \mathbb{A}_{\mathbb{K}}^1 \setminus \{0\}$.
 Consider the Galois covering
\begin{align*}
\tilde{\pi} \colon \mathfrak{M}\times GL_{n+1}(\mathbb{K})\times(\mathbb{A}^1_{\mathbb{K}}\setminus\{0\})^{m-1}&\longrightarrow
\mathfrak{M}\times GL_{n+1}(\mathbb{K})\times(\mathbb{A}^1_{\mathbb{K}}\setminus\{0\})^{m-1}\\ 
(a,P,t_1,\dots,t_{m-1})&\longmapsto (a,P,t_1^2,\dots,t_{m-1}^2)
\end{align*}
The deck group is $(\mathbb{Z}/2\mathbb{Z})^{m-1}=\{\pm1\}^{m-1}$, hence we have the short exact sequence
\[
1\longrightarrow \pi_1\bigl(\widetilde{\mathfrak{M}},s\bigr)\xrightarrow{\ \tilde{\pi}_*\ } 
\pi_1\bigl(\widetilde{\mathfrak{M}},s\bigr)\longrightarrow \{\pm1\}^{m-1}\longrightarrow 1,
\]
where the base point is $s=(a,P,1,\dots,1)\in\widetilde{\mathfrak{M}}$.

By construction, one can check without difficulty that the pullback of $\tilde{f}\colon \widetilde{\mathcal{X}}\to \widetilde{\mathfrak{M}}$ along $\tilde{\pi}$ is isomorphic to the pullback of $ \mathcal{X}\stackrel{f}{\rightarrow} \mathfrak{M}$ along $p_1$, where 
\[p_1\colon \mathfrak{M}\times GL_{n+1}(\mathbb{K})\times (\mathbb{A}^1_{\mathbb{K}}\setminus\{0\})^{m-1}\longrightarrow  \mathfrak{M}\]
 is the projection. Consequently, the image of the subgroup $\operatorname{Im}\tilde{\pi}_*$ under the monodromy representation $\rho$ is contained in $\Gamma$. Next we construct representatives for the coset space 
$\pi_1(\widetilde{\mathfrak{M}}, s)/\operatorname{Im}\tilde{\pi}_*$. 
For each $1\le i\le m-1$, the embedding of $\mathbb{A}^1_{\mathbb{K}}\setminus\{0\}$ 
into the $i$-th factor of $(\mathbb{A}^1_{\mathbb{K}}\setminus\{0\})^{m-1}$ induces a map
\[
\varphi_i\colon \mathbb{A}^1_{\mathbb{K}}\setminus\{0\}\longrightarrow 
\mathfrak{M}\times GL_{n+1}(\mathbb{K})\times (\mathbb{A}^1_{\mathbb{K}}\setminus\{0\})^{m-1},\quad
t\longmapsto (a, P, 1,\ldots, t,\ldots, 1).
\]
Let $\iota_i\in\pi_1(\mathbb{A}^1_{\mathbb{K}}\setminus\{0\},1)$ be as in 
\Cref{prop:involution induces -1 action}. 
Then the elements $\varphi_{i*}\iota_i\in\pi_1(\widetilde{\mathfrak{M}}, s)$ 
($1\le i\le m-1$) form a set of representatives of the cosets of 
$\operatorname{Im}\tilde{\pi}_*$.  
Hence $\widetilde{\Gamma}=\rho(\pi_1(\widetilde{\mathfrak{M}}, s))$ is generated by $\Gamma$ 
together with the elements $\rho(\varphi_{i*}\iota_i)$. 
By \Cref{prop:involution induces -1 action}, each $\rho(\varphi_{i*}\iota_i)=-1$, and therefore 
\(
\widetilde{\Gamma}=\Gamma\cdot\{\pm 1\}\).
\end{proof}

Now we are ready to prove our main theorem.
\begin{proof}[Proof of \Cref{thm:monodromy for m}]
Let \(V = H^{n}(X, \mathbb{F}_{\ell})_{(1)}\).  
By \Cref{prop:gamma pm 1 equal to tilde gamma}, the index 
\([\widetilde{\Gamma} : \Gamma] \le 2\). 
Hence \(\Gamma\) is a normal subgroup of \(\widetilde{\Gamma}\), and the quotient
\(\widetilde{\Gamma}/\Gamma\) is either trivial or isomorphic to 
\(\mathbb{Z}/2\mathbb{Z}\).  
In particular, \(\Gamma\) contains the derived subgroup 
\(\widetilde{\Gamma}'\), and moreover
\(\{\tau^{2} \mid \tau \in \widetilde{\Gamma}\}\subset \Gamma\).

If $n$ is odd, then $\widetilde{\Gamma}=\operatorname{Sp}(V)$ by \Cref{thm:monodromy for m tilde}. 
For any $\delta\in V$ and $\lambda\in\mathbb{K}$, let 
\(
T_{\delta,\lambda}\in\operatorname{Sp}(V)
\)
be the transvection defined by 
\(
T_{\delta,\lambda}(a)=a+\lambda(a,\delta)\,\delta\).
Since $T_{\delta,\lambda}=T_{\delta,\lambda/2}^{\,2}$, we have $T_{\delta,\lambda}\in\Gamma$. 
As $\operatorname{Sp}(V)$ is generated by all such transvections 
$\{T_{\delta,\lambda}\mid \delta\in V,\ \lambda\in\mathbb{K}\}$,
we conclude that $\Gamma=\operatorname{Sp}(V)$ in this case.

Suppose \(n\) is even. Since \(m\) is also even and we assume \(m\ge n+3\), it follows that in fact \(m\ge n+4\).
 By 
\Cref{prop:Hn X and H1 C for general K} we obtain
\[
\dim V=\binom{m-2}{n}\ge \binom{n+2}{n}\ge 6.
\]
Let \(\Omega\) be the derived subgroup of \(\operatorname{O}(V)\).
It is known (cf.~\cite[Prop.~6.28, Prop.~6.30, Thm.~9.7]{grove2002classical})
that \(\Omega\) coincides with its own derived subgroup \(\Omega'\), and that
\(\Omega=\ker\theta\cap \operatorname{SO}(V)\).
Thus \(\ker\theta\cap \operatorname{SO}(V)\) is equal to its own derived group, 
and since \(\ker\theta\cap \operatorname{SO}(V)\subset \widetilde{\Gamma}\), we have
\(
\ker\theta\cap \operatorname{SO}(V)\subset \widetilde{\Gamma}'\subset \Gamma\).
By the Picard--Lefschetz formula \eqref{equ:Picard--Lefschetz formula}, 
$\Gamma$ contains at least one reflection $T_{\delta}$ for some 
vanishing cycle $\delta\in S$.  
Furthermore, this reflection maps to the nontrivial element of the quotient 
\(
\widetilde{\Gamma}/(\ker\theta \cap \operatorname{SO}(V))\),
which is of order $2$.  
Thus $\Gamma$ surjects onto this quotient and hence must equal 
$\widetilde{\Gamma}$.  
The assertion follows from \Cref{thm:monodromy for m tilde}.
\end{proof}

Note that the lisse sheaf \(R^n f_* \mathbb{Z}_\ell\) on \(\mathfrak{M}\) induces the 
\(\ell\)-adic monodromy representation
\[
\rho^\ell:\pi_1(\mathfrak{M},s)\to \operatorname{Aut}\!\left(H^n(X, \mathbb{Z}_\ell)_{(1)}, Q\right).
\]
When \(n\) is odd, the corresponding \(\ell\)-adic monodromy group \(\operatorname{Im}\rho^\ell\) 
can be determined as well.

\begin{corollary}\label{cor:ell adic monodromy}
	If $n$ is odd, then $\operatorname{Im}\rho^{\ell}=\operatorname{Sp}(H^n(X,\mathbb{Z}_\ell))$.
\end{corollary}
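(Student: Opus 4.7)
The plan is to deduce the $\ell$-adic statement from \Cref{thm:monodromy for m}(1) via a standard group-theoretic lifting argument, following the template used to pass from Yu's mod-$\ell$ result \Cref{thm:Yu thm} to the $\ell$-adic version of it in \cite{yu1997toward, achter2007integral, 10.1215/S0012-7094-08-14115-8}.

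First, I would set up the target group. Since $n$ is odd and $X$ is smooth and proper, Poincar\'e duality furnishes a non-degenerate skew-symmetric pairing on $H^{n}(X,\mathbb{Z}_\ell)$ which is preserved by the monodromy action of $\pi_1(\mathfrak{M},s)$. Hence $G := \operatorname{Im}\rho^\ell$ is a closed subgroup of $\operatorname{Sp}\!\bigl(H^{n}(X,\mathbb{Z}_\ell)\bigr)$. The reduction-mod-$\ell$ short exact sequence $0\to \mathbb{Z}_\ell \xrightarrow{\ell}\mathbb{Z}_\ell\to\mathbb{F}_\ell\to 0$ yields a surjection of lisse sheaves $R^n f_*\mathbb{Z}_\ell \twoheadrightarrow R^n f_*\mathbb{F}_\ell$, so that the image of $G$ under the natural projection $\operatorname{Sp}\!\bigl(H^{n}(X,\mathbb{Z}_\ell)\bigr) \twoheadrightarrow \operatorname{Sp}\!\bigl(H^{n}(X,\mathbb{F}_\ell)\bigr)$ equals the mod-$\ell$ monodromy group $\Gamma$. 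By \Cref{thm:monodromy for m}(1), $\Gamma = \operatorname{Sp}\!\bigl(H^{n}(X,\mathbb{F}_\ell)\bigr)$.

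It therefore suffices to invoke the following lifting lemma: \emph{For any odd prime $\ell$ and any $g\ge 1$, a closed subgroup $H\subseteq \operatorname{Sp}_{2g}(\mathbb{Z}_\ell)$ whose image under the reduction map $\operatorname{Sp}_{2g}(\mathbb{Z}_\ell)\twoheadrightarrow \operatorname{Sp}_{2g}(\mathbb{F}_\ell)$ equals the entire finite symplectic group must coincide with $\operatorname{Sp}_{2g}(\mathbb{Z}_\ell)$.} This is standard; its proof analyses the congruence filtration of $\operatorname{Sp}_{2g}(\mathbb{Z}_\ell)$, whose kernel $K_1=\ker(\operatorname{Sp}_{2g}(\mathbb{Z}_\ell)\to \operatorname{Sp}_{2g}(\mathbb{F}_\ell))$ is a pro-$\ell$ group with successive graded quotients isomorphic to the adjoint module $\mathfrak{sp}_{2g}(\mathbb{F}_\ell)$. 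Perfectness of $\operatorname{Sp}_{2g}(\mathbb{F}_\ell)$ for odd $\ell$ (with the low-rank exceptions covered by Yu's original analysis) together with the absence of non-trivial $\operatorname{Sp}_{2g}(\mathbb{F}_\ell)$-equivariant self-extensions of $\mathfrak{sp}_{2g}(\mathbb{F}_\ell)$ forces $H\supseteq K_1$ inductively, whence $H=\operatorname{Sp}_{2g}(\mathbb{Z}_\ell)$.

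In our situation no new geometric input is needed beyond \Cref{thm:monodromy for m}: the whole corollary is a purely group-theoretic consequence of the mod-$\ell$ result. The only delicate point is the verification of the lifting lemma in small-rank / small-characteristic cases (most notably the pair $(g,\ell)=(1,3)$, where $\operatorname{SL}_2(\mathbb{F}_3)$ fails to be perfect); this case is exactly the one handled directly in \cite{yu1997toward}, so one may cite the lifting lemma in the form in which it is stated there. All other ingredients, notably the identification of the target with $\operatorname{Sp}\!\bigl(H^{n}(X,\mathbb{Z}_\ell)\bigr)$ via Poincar\'e duality and the compatibility of $\rho^\ell$ with its mod-$\ell$ reduction, are formal.
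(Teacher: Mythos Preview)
Your proof is correct and follows essentially the same approach as the paper's: both observe that the mod-$\ell$ reduction of $\operatorname{Im}\rho^\ell$ is the full finite symplectic group by \Cref{thm:monodromy for m}(1), and then invoke the standard lifting lemma that a closed subgroup of $\operatorname{Sp}_{2g}(\mathbb{Z}_\ell)$ surjecting onto $\operatorname{Sp}_{2g}(\mathbb{F}_\ell)$ must be all of $\operatorname{Sp}_{2g}(\mathbb{Z}_\ell)$. The only difference is cosmetic: the paper cites \cite[Thm.~1.3]{vasiu2003surjectivity} for this lifting step, whereas you sketch the congruence-filtration argument and point to \cite{yu1997toward} for the small-rank cases.
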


\begin{proof}
Since the mod~$\ell$ reduction of $\operatorname{Im}\rho^\ell$ is the mod-$\ell$ monodromy group $\Gamma$, 
\Cref{thm:monodromy for m} implies that $\operatorname{Im}\rho^\ell$ surjects onto $\operatorname{Sp}(H^n(X,\mathbb{F}_\ell))$. 
The statement then follows from \cite[Thm.~1.3]{vasiu2003surjectivity}.
\end{proof}

Let $\mathbb{F}_q$ be a finite field of characteristic $p$. 
Note that the family $f\colon \mathcal{X}\rightarrow \mathfrak{M}$ can also be defined over $\mathbb{F}_q$. 
For any rational point $u\in \mathfrak{M}(\mathbb{F}_q)$, the zeta function (cf.~\cite[Chap.~II, Cor.~4.6]{freitag1987etale})
\(
Z(\mathcal{X}_u/\mathbb{F}_q,T)\in \mathbb{Q}(T)
\)
is a rational function .  
Let $P(\mathcal{X}_u/\mathbb{F}_q,T)$ denote its numerator. Then
\(
P(\mathcal{X}_u/\mathbb{F}_q,T)\in \mathbb{Q}[T]
\)
is a polynomial with rational coefficients.

\begin{proposition}\label{prop:zeta function}
Suppose $n$ is odd. If $i\to\infty$, then the proportion of points 
$u\in \mathfrak{M}(\mathbb{F}_{q^{i}})$ for which 
$P(\mathcal{X}_u/\mathbb{F}_{q^{i}},T)$ is irreducible over $\mathbb{Q}$ 
tends to \(1\); that is,
\[
\lim_{i\to\infty}
\frac{\#\{\,u\in \mathfrak{M}(\mathbb{F}_{q^{i}})\mid 
P(\mathcal{X}_u/\mathbb{F}_{q^{i}},T)\ \text{irreducible over }\mathbb{Q}\,\}}
     {\#\mathfrak{M}(\mathbb{F}_{q^{i}})}
=1.
\]
\end{proposition}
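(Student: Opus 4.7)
The plan is to reduce the assertion to Chavdarov's equidistribution theorem, using the big monodromy result of \Cref{cor:ell adic monodromy}. First, I would identify the polynomial $P(\mathcal{X}_u/\mathbb{F}_{q^i}, T)$ with the characteristic polynomial of Frobenius on a suitable cohomology group. By the Grothendieck--Lefschetz trace formula,
\[
Z(\mathcal{X}_u/\mathbb{F}_{q^i}, T) = \prod_{j=0}^{2n} P_j(T)^{(-1)^{j+1}},
\qquad P_j(T) = \det\!\bigl(1 - T\cdot \operatorname{Frob}_u \mid H^j(\mathcal{X}_{\bar u}, \mathbb{Q}_\ell)\bigr).
\]
By \Cref{prop:Hi of X for i not n}, $H^j(\mathcal{X}_u, \mathbb{Q}_\ell) \simeq H^j(\mathbb{P}^n, \mathbb{Q}_\ell)$ for $j \neq n$, so in odd degrees only $P_n$ can be non-trivial. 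Consequently the numerator of the zeta function reduces to $P_n(T)$. Since $n$ is odd, the $(-1)$-eigenspace coincides with all of $H^n(\mathcal{X}_u, \mathbb{Q}_\ell)$, and hence $P(\mathcal{X}_u/\mathbb{F}_{q^i}, T)$ is exactly the characteristic polynomial of $\operatorname{Frob}_u$ acting on the fiber of the lisse sheaf $R^n f_* \mathbb{Q}_\ell$ at $u$.

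The second step is to invoke \Cref{cor:ell adic monodromy}: the $\ell$-adic geometric monodromy group of $R^n f_* \mathbb{Z}_\ell$ on $\mathfrak{M}_{\overline{\mathbb{F}_q}}$ is the full symplectic group $\operatorname{Sp}(H^n(X, \mathbb{Z}_\ell))$. Combined with Deligne's Weil~II purity, the sheaf $R^n f_* \mathbb{Q}_\ell$ is lisse and pure of weight $n$, with full symplectic geometric monodromy on the smooth, geometrically connected open subvariety $\mathfrak{M}$ of affine space (cf.~\eqref{eq:matrix of bij}). One then applies Chavdarov's theorem \cite{chavdarov1997generic}: for a lisse pure $\ell$-adic sheaf on a smooth geometrically connected variety $U/\mathbb{F}_q$ whose geometric monodromy is the full symplectic group, the proportion of $u \in U(\mathbb{F}_{q^i})$ for which $\det(1 - T\cdot \operatorname{Frob}_u)$ is $\mathbb{Q}$-irreducible tends to $1$ as $i \to \infty$. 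Specializing to $U = \mathfrak{M}$ yields the desired conclusion.

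The main obstacle is to verify that all hypotheses of Chavdarov's theorem hold in our precise setting, in particular the relation between arithmetic and geometric monodromy. Because the Poincar\'e duality pairing on $H^n$ is Galois-equivariant only up to a Tate twist, the arithmetic monodromy group $G_{\mathrm{arith}}$ is contained in $\operatorname{GSp}$ rather than $\operatorname{Sp}$, with the quotient $G_{\mathrm{arith}}/G_{\mathrm{geom}}$ controlled by the similitude character. However, Chavdarov's strategy applies equally well here: Deligne's equidistribution theorem distributes Frobenius conjugacy classes uniformly within each coset of $\operatorname{Sp}$ in $G_{\mathrm{arith}}$, and the combinatorial estimate showing that the proportion of semisimple conjugacy classes in $\operatorname{Sp}$ with reducible characteristic polynomial tends to zero is unaffected by the similitude factor. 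Once these standard adjustments are recorded, the proposition follows.
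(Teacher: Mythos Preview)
Your proposal is correct and follows essentially the same approach as the paper: identify the numerator of the zeta function with $\det(1-T\cdot\mathrm{Fr}\mid H^n(\mathcal{X}_{\bar u},\mathbb{Q}_\ell))$ via the trace formula and \Cref{prop:Hi of X for i not n}, then feed the big monodromy from \Cref{cor:ell adic monodromy} into Chavdarov's theorem. One small sharpening: Chavdarov's argument \cite[Thm.~2.1]{chavdarov1997generic} is stated for a \emph{compatible system} of $\ell$-adic sheaves (one needs big monodromy for infinitely many $\ell$, not a single $\ell$), and the paper accordingly applies it to the system $\{R^n f_*\mathbb{Z}_\ell\}_{\ell}$; since \Cref{cor:ell adic monodromy} holds for every odd $\ell\neq p$, this is automatic, but your wording ``a lisse pure $\ell$-adic sheaf'' should be adjusted to reflect it.
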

\begin{proof}
 Let $X=\mathcal{X}_u$ with $u\in \mathfrak{M}(\mathbb{F}_{q^i})$.  
For any odd prime $\ell\neq p$, \Cref{prop:Hi of X for i not n} implies that 
$H^j(X,\mathbb{F}_\ell)=0$ for all odd $j\neq n$.  
By the Grothendieck trace formula (cf.~\cite[Chap.~II, Thm.~4.4]{freitag1987etale}), we have
\[
Z(X/\mathbb{F}_{q^i},T)
=\frac{\det(1-T\mathrm{Fr}\mid H^{n}(X,\mathbb{Q}_\ell))}
{(1-T)(1-q^{i}T)\cdots(1-q^{in}T)}.
\]
Hence 
\[
P(X/\mathbb{F}_{q^i},T)
=\det(1-T\mathrm{Fr}\mid H^{n}(X,\mathbb{Q}_\ell)).
\]
Since the $\ell$-adic monodromy group of $R^{n}f_{*}\mathbb{Z}_\ell$ is 
$\operatorname{Sp}(H^{n}(X,\mathbb{Z}_\ell))$ by \Cref{cor:ell adic monodromy},  
the desired assertion follows from \cite[Thm.~2.1]{chavdarov1997generic}, applied to the compatible system $R^{n}f_{*}\mathbb{Z}_\ell$ over $\mathfrak{M}$.
\end{proof}

\printbibliography[heading=bibintoc] 

@Preamble{ "\def\cprime{$'$} "
}

@article{achter2006distribution,
  title={The distribution of class groups of function fields},
  author={Achter, Jeffrey D},
  journal={Journal of Pure and Applied Algebra},
  volume={204},
  number={2},
  pages={316--333},
  year={2006},
  publisher={Elsevier}
}

@article{kowalski2006large,
  title={The large sieve, monodromy and zeta functions of curves},
  author={Kowalski, Emmanuel},
  journal={Journal f{\"u}r die reine und angewandte Mathematik},
  volume={601},
  pages={29--69},
  year={2006}
}

@article{gerkmann2013monodromy,
  title={On the monodromy of the moduli space of Calabi--Yau threefolds coming from eight planes in $\mathbb{P}^3$},
  author={Gerkmann, Ralf and Sheng, Mao and van Straten, Duco and Zuo, Kang},
  journal={Mathematische Annalen},
  volume={355},
  number={1},
  pages={187--214},
  year={2013},
  publisher={Springer}
}

@article{castor2024remarks,
  title={Remarks on eigenspectra of isolated singularities},
  author={Castor, Ben and Deng, Haohua and Kerr, Matt and Pearlstein, Gregory},
  journal={Pacific Journal of Mathematics},
  volume={327},
  number={1},
  pages={29--54},
  year={2024},
  publisher={Mathematical Sciences Publishers}
}

@article{yu1997toward,
  title={Toward a proof of the Cohen-Lenstra conjecture in the function field case},
  author={Yu, Jiu-Kang},
  journal={preprint},
  year={1997}
}

@article{chavdarov1997generic,
  title={The generic irreducibility of the numerator of the zeta function in a family of curves with large monodromy},
  author={Chavdarov, Nick},
  journal={Duke Mathematical Journal},
  volume={90},
  number={1},
  pages={151--180},
  year={1997}
}

@book{grove2002classical,
  title={Classical Groups and Geometric Algebra},
  author={Grove, Larry C},
  volume={39},
  year={2002},
  publisher={American Mathematical Soc.}
}

@article{vasiu2003surjectivity,
  title={Surjectivity criteria for p-adic representations, Part I},
  author={Vasiu, Adrian},
  journal={manuscripta mathematica},
  volume={112},
  number={3},
  pages={325--355},
  year={2003},
  publisher={Springer}
}

@article{kerz2010different,
  title={On different notions of tameness in arithmetic geometry},
  author={Kerz, Moritz and Schmidt, Alexander},
  journal={Mathematische Annalen},
  volume={346},
  number={3},
  pages={641--668},
  year={2010},
  publisher={Springer}
}

@book{deligne2006groupes,
  title={Groupes de Monodromie en G\'{e}om\'{e}trie Alg\'{e}brique: S\'{e}minaire de G\'{e}om\'{e}trie Alg\'{e}brique du Bois-Marie 1967-1969 (SGA 7 II)},
  author={Deligne, Pierre and Katz, Nicholas Michael},
  volume={340},
  year={2006},
  publisher={Springer},
 shorthand = {SGA 7}
}

@book{esnault1992lectures,
  title={Lectures on Vanishing Theorems},
  author={Esnault, H{\'e}lene and Viehweg, Eckart},
  volume={20},
  year={1992},
  publisher={Springer}
}

@article{aguilar2006transfers,
  title={Transfers for ramified covering maps in homology and cohomology},
  author={Aguilar, Marcelo A and Prieto, Carlos},
  journal={International Journal of Mathematics and Mathematical Sciences},
  volume={2006},
  number={1},
  pages={094651},
  year={2006},
  publisher={Wiley Online Library}
}

@article{macdonald1962symmetric,
  title={Symmetric products of an algebraic curve},
  author={Macdonald, Ian G},
  journal={Topology},
  volume={1},
  number={4},
  pages={319--343},
  year={1962},
  publisher={Pergamon}
}

@article{seroul1972anneau,
  title={Anneau de cohomologie enti{\`e}re et $ KU^{*} $-th{\'e}orie d’un produit sym{\'e}trique d’une surface de Riemann},
  author={Seroul, R},
  journal={Publications du D{\'e}partement de math{\'e}matiques (Lyon)},
  volume={9},
  number={4},
  pages={27--66},
  year={1972}
}

@misc{stacks-project,
  author       = {{The Stacks project authors}},
  title        = {The Stacks project},
  howpublished = {\url{https://stacks.math.columbia.edu}},
  year         = {2023},
}

@book{freitag1987etale,
  title={Etale Cohomology and the Weil Conjecture},
  author={Freitag, E. and Kiehl, R.},
  isbn={9783662025420},
  series={Ergebnisse der Mathematik und ihrer Grenzgebiete},
  url={https://doi.org/10.1007/978-3-662-02541-3},
  year={1987},
  publisher={Springer}
}

@book{grothendieck1971tame,
  title={The Tame Fundamental Group of a Formal Neighbourhood of a Divisor with Normal Crossings on a Scheme},
  author={Grothendieck, A. and Murre, J.P.},
  isbn={9783540054993},
  lccn={lc77164958},
  series={Lecture Notes in Mathematics},
  url={https://books.google.fr/books?id=sx7FQgAACAAJ},
  year={1971},
  publisher={Springer Berlin Heidelberg}
}

@article{SGA1,
  title={Rev{\^e}tements {\'E}tales et Groupe Fondamental (S{\'e}minaire de G{\'e}om{\'e}trie Alg{\'e}brique du Bois Marie 1960--1961)},
  author={Grothendieck, A and Raynaud, M},
  journal={Lecture Notes in Math},
  volume={224},
  YEAR = {1971}
}

@article{achter2007integral,
  title={{The integral monodromy of hyperelliptic and trielliptic curves}},
  author={Achter, Jeffrey D and Pries, Rachel},
  journal={Mathematische Annalen},
  volume={338},
  number={1},
  pages={187--206},
  year={2007},
  doi = {10.1007/s00208-006-0072-0},
  url = {https://doi.org/10.1007%2Fs00208-006-0072-0},
  publisher={Springer}
}

@article{10.1215/S0012-7094-08-14115-8,
author = {Chris Hall},
title = {{Big symplectic or orthogonal monodromy modulo $\ell$}},
volume = {141},
journal = {Duke Mathematical Journal},
number = {1},
publisher = {Duke University Press},
pages = {179 -- 203},
year = {2008},
doi = {10.1215/S0012-7094-08-14115-8},
URL = {https://doi.org/10.1215/S0012-7094-08-14115-8}
}

@article{sheng2015monodromy,
  title={{The monodromy groups of Dolgachev's CY moduli spaces are Zariski dense}},
  author={Sheng, Mao and Xu, Jinxing and Zuo, Kang},
  journal={Advances in Mathematics},
  volume={272},
  pages={699--742},
  year={2015},
  publisher={Elsevier},
  doi = {10.1016/j.aim.2014.12.017},
  url = {https://www.sciencedirect.com/science/article/pii/S0001870814004459}
}

@article{xu2018zariski,
  title={{Zariski Density of Monodromy Groups via a Picard--Lefschetz Type Formula}},
  author={Xu, Jinxing},
  journal={International Mathematics Research Notices},
  volume={2018},
  number={11},
  pages={3556--3586},
  year={2018},
  publisher={Oxford University Press},
  doi = {10.1093/imrn/rnw342},
  url = {https://doi.org/10.1093%2Fimrn%2Frnw342}
}
\end{document}